\newcommand{\rrvert}{\vert}
\newcommand{\rrVert}{\Vert}
\newcommand{\llvert}{\vert}
\newcommand{\llVert}{\Vert}
\newcommand{\arxivurl}[1]{arXiv:#1}
\newtheorem{theorem}{Theorem}[section]
\newtheorem{lem}[theorem]{Lemma}
\newtheorem{cor}[theorem]{Corollary}
\theoremstyle{definition}
\newtheorem{defn}[theorem]{Definition}
\newtheorem{remark}[theorem]{Remark}
\numberwithin{equation}{section}
\newcommand{\supp}{\operatorname{supp}}
\newcommand{\wt}{\widetilde}
\newcommand{\mcl}{\mathcal}
\newcommand{\W}{\mcl{W}}
\newcommand{\defnphrase}[1]{\emph{#1}}
\newcommand{\st}{ : }
\newcommand{\Reals}{\mathbb{R}}
\newcommand{\Nats}{\mathbb{N}}
\newcommand{\edges}{e}
\newcommand{\vertices}{v}
\newcommand{\loops}{\ell}
\newcommand{\EE}{\mathbb{E}}
\newcommand{\convPr}{\xrightarrow{ p }}
\newcommand{\convDist}{\xrightarrow{ d }}
\newcommand{\equaldist}{\overset{d}{=}}
\newcommand{\upto}{ \uparrow }
\newcommand{\given}{\mid}
\newcommand{\binDist}{\operatorname{Bin}}
\newcommand{\uniDist}{\operatorname{Uni}}
\newcommand{\poiDist}{\operatorname{Poi}}
\newcommand{\BernDist}{\operatorname{Bern}}
\newcommand{\as}{\textrm{ a.s.}}
\newcommand{\equalas}{\overset{\mathrm{a.s.}}{=}}
\newcommand{\intd}{\mathrm{d}}
\newcommand{\distiid}{\overset{\mathrm{iid}}{\ \sim\ }}
\newcommand{\PP}{\Pi}
\newcommand{\PPDist}{\mathrm{PP}}
\newcommand{\exclude}{\setminus}
\newcommand{\Lebesgue}{\Lambda}
\newcommand{\StarF}{S}\newcommand{\IsoF}{I}
\newcommand{\E}{\mathbb{E}}
\newcommand{\R}{\mathbb{R}}
\newcommand{\eps}{\varepsilon}
\newcommand{\KEG}{\Gamma}
\newcommand{\KEGDinf}[1]{\mathrm{KEG}(#1)}
\newcommand{\GS}[1]{\mathscr{G}(#1)}
\newcommand{\labelLim}{\zeta}
\newcommand{\limspace}{\mathscr{W}}
\newcommand{\graphspace}{\mathscr{G}}
\begin{document}

\begin{frontmatter}

\title{Sampling perspectives on sparse exchangeable graphs}
\runtitle{Sampling Perspectives on Sparse Exchangeable Graphs}

\begin{aug}

\author{\fnms{Christian}~\snm{Borgs}\thanksref{m1}\ead[label=e1]{borgs@microsoft.com}},
\author{\fnms{Jennifer T.}~\snm{Chayes}\thanksref{m1}\ead
[label=e2]{jchayes@microsoft.com}},
\author{\fnms{Henry}~\snm{Cohn}\thanksref{m1}\ead
[label=e3]{cohn@microsoft.com}} \and \\
\author{\fnms{Victor}~\snm{Veitch}\thanksref{m2,t1}\ead
[label=e4]{vcv2109@columbia.edu}} \runauthor{Borgs, Chayes, Cohn and Veitch}

\affiliation{Microsoft Research\thanksmark{m1} and Columbia
University\thanksmark{m2}}

\address{C. Borgs\\
J. T. Chayes\\
H. Cohn\\
Microsoft Research\\
One Memorial Drive\\
Cambridge, Massachusetts 02142\\
USA\\\printead{e1}
\\\phantom{E-mail:} \printead*{e2}
\\\phantom{E-mail:} \printead*{e3}}
\address{V. Veitch\\
Department of Statistics\\
Columbia University\\
1255 Amsterdam Ave\\
New York, New York 10027\\
USA\\\printead{e4}}

\end{aug}

\def\thethanks{1}
\usethankscounter{thanks}
\thankstext{t1}{Work partially completed while at University of Toronto.
Supported in part by U.S. Air Force Office of Scientific Research Grant
\#FA9550-15-1-0074 and an internship at Microsoft Research New England.}

\received{\smonth{8} \syear{2017}}
\revised{\smonth{6} \syear{2018}}

\begin{abstract}
Recent work has introduced sparse exchangeable graphs and the associ\-ated
graphex framework, as a generalization of dense exchangeable graphs and the
associated graphon framework. The development of this subject involves the
interplay between the statistical modeling of network data, the
\mbox{theory} of large graph limits, exchangeability and network sampling.
The pur\-pose of the present paper is to clarify the relationships between
these subjects by explaining each in terms of a certain natural sampling
scheme associated with the graphex model. The first main technical
contribution is the introduc\-tion of \emph{sampling convergence}, a new
notion of graph limit that generalizes left convergence so that it becomes
meaningful for the sparse graph regime. The second main technical
contribution is the demonstration that the (somewhat cryptic) notion of
exchangeability underpinning the graphex framework is equivalent to a more
natural probabilistic invariance expressed in terms of the sampling scheme.
\end{abstract}

\begin{keyword}[class=AMS]
\kwd[Primary ]{60C05}
\kwd[; secondary ]{62D05} \kwd{62G09} \kwd{62G05}.
\end{keyword}

\begin{keyword}
\kwd{Network analysis} \kwd{sampling} \kwd{graph limits} \kwd{nonparametric
estimation}.
\end{keyword}
\end{frontmatter}

\setcounter{page}{1}

\section{Introduction}\label{sec1}

\stepcounter{footnote}

The present paper is concerned with the theory of graph limits, the
statistical modeling of networks and the relationship between these topics
and exchangeability. In the setting of dense graphs, these topics meet in the
theory of graphons, which are fundamental in the study of graph limits \cite
{Borgs:Chayes:Lovasz:Sos:Vesztergombi:2006:1,Lovasz:Szegedy:2006,Lovasz:Szegedy:2007,Borgs:Chayes:Lovasz:Sos:Vesztergombi:2008,Borgs:Chayes:Lovasz:Sos:Vesztergombi:2012}
(see \cite{Lovasz:2013:A} for a review) and provide the foundation for many
of the statistical \mbox{network} models in current use \cite
{Nowicki:Snijders:2001:A,Hoff:Raftery:Handcock:2002,Airoldi:Blei:Fienberg:Xing:2008,Miller:Jordan:Griffiths:2009,Lloyd:Orbanz:Ghahramani:Roy:2012}
(see \cite{Orbanz:Roy:2015} for a review). Motivated by the importance of
graphons in the dense graph setting,  \mbox{a recent series of papers} \cite
{Caron:Fox:2014,Herlau:Schmidt:Morup:2015,Veitch:Roy:2015,Borgs:Chayes:Cohn:Holden:2016,Veitch:Roy:2016,Todeschini:Caron:2016,Janson:2017}
has developed a generalization of the graphon framework to the regime of
sparse graphs, both as a tool for statistical network modeling
\cite{Veitch:Roy:2015,Borgs:Chayes:Cohn:Holden:2016} and estimation
\cite{Veitch:Roy:2016}, and as the central element of a limit theory for
large graphs
\cite{Borgs:Chayes:Cohn:Holden:2016} (see also
\cite{Janson:2016}). This generalization is compelling in that it preserves
many of the desirable properties of the graphon framework, while
simultaneously allowing much greater flexibility. However, there are some
significant interpretational issues remaining. For example, it is unclear
which real-world processes are appropriately modeled by the statistical
network models of the new framework, or how best to
characterize the properties of large graphs that are well
approximated by the new \mbox{limit} theory. The root of these difficulties
is that the new framework is derived us-ing a cryptic construction that
represents random graphs as point processes on $\Reals_{+}^2$, and then
formalizes the models of the generalized framework as those corresponding to
point processes that are exchangeable.

In the dense setting, graphons as stochastic network models can be arrived at
in at least two different ways. The first approach is simply to posit them
directly. Graphon models are the class of generative models for random graphs
in which each vertex $i$ is assigned some independent latent features $x_i$,
and conditional on these latent features, each pair of vertices $i,j$ is
connected by an edge independently with probability $W(x_i,x_j)$ determined
by the latent features of $i$ and $j$. This is a very natural class of
models, and models of this type, such as stochastic block models and latent
feature models, have a long history in the statistical networks
literature. The second approach proceeds by identifying a projective family
$(G_n)_{n \in \Nats}$ of random graphs with the upper left $n \times n$
submatrices of an infinite random adjacency matrix $A$, and then defining the
class of models to be those such that the distribution of $A$ is
invariant under joint permutations of its rows and columns. This
\emph{exchangeability} of $A$ is a natural formalization of the requirement
that the labels of the vertices of a random graph should be uninformative
about the structure of the graph. The fact that the graphon models are the
models defined by exchangeability of the infinite adjacency matrix is,
essentially, the content of the celebrated Aldous--Hoover theorem
\cite{Aldous:1981,Hoover:1979}.

Graphons as limit objects for dense graphs sequence also arise very naturally
in the dense setting: many natural notions of similarity, such as left
convergence moti\-vated by extremal graph theory, right convergence motivated
by studying statistical physics (or, equivalently, graphical) models on
graphs, as well as quotient convergence motivated by combinatorial
optimization, all lead to graphons over probabil\-ity spaces as the
completion of the space of dense graphs \cite
{Borgs:Chayes:Lovasz:Sos:Vesztergombi:2006:1,Borgs:Chayes:Lovasz:Sos:Vesztergombi:2008,Borgs:Chayes:Lovasz:Sos:Vesztergombi:2012}.
These notions of convergence turn out to all be equivalent, and can be
metrized by the \emph{cut metric} (discussed below), making the theory of
graph convergence \mbox{a well rounded} math-ematical theory. Finally,
exchangeable random graphs generated from a graphon can be shown to converge
to the generating graphon \cite{Lovasz:Szegedy:2006}, creating a first
connec\-tion between graphons as models for exchangeable random graphs and as
limits of sequences of sparse graphs. See \cite{Diaconis:Janson:2007} for a
systematic overview of the relationship between the theory of graph
convergence and the theory of exchangeable random graphs in the dense graph
setting.

The key ingredient of the generalization from the dense graph setting to the
sparse graph setting is a novel notion of exchangeability for random graphs.
In the generalized theory, the vertices of the random graphs are labeled in
$\Reals_{+}$, the edge sets of these graphs are represented as point
processes on $\Reals_{+}^2$ and invariance under vertex relabeling is encoded
as joint exchangeability of the point process. This rather abstruse
formalization was introduced as an ad hoc solution to the problem that the
more obvious notion of exchangeability implies that the corresponding random
graphs are almost surely dense. Nevertheless, the resulting models retain the
essential character of the dense graphon models: each vertex $i$ has latent
feature $x_i$ and, conditional on these latent features, each edge is
included independently with a probability determined by the latent features
of its endpoints. The essential difference is that the latent features are
now generated as a Poisson process on a $\sigma$-finite space, rather than
independently. The appeal of these models is then their close analogy to the
dense graphon models, in combination with their greater flexibility.

However, this picture is somewhat superficial, since it leaves many questions
unanswered. Why do we represent graphs as point processes? Why does the
corresponding notion of exchangeability give a much broader class of models
than the adjacency matrix exchangeability? Why should the points in the
latent feature space be distributed according to a Poisson process? What
motivates the particular way of embedding graphs into the space of graphons
over $\Reals_{+}$ that \cite{Borgs:Chayes:Cohn:Holden:2016} uses to translate
convergence in the cut metric for graphons into a notion of convergence in
metric for graphs? Why are graph limits and statistical network modeling so
closely tied together? The contribution of the present paper is to resolve
these conceptual difficulties by relating the core ideas---graph limits,
statistical network modeling and exchangeability---to a certain natural
scheme for sampling random subgraphs from larger graphs.

Our first main contribution is the introduction and development of
\defnphrase{sampling convergence}, a new notion of graph limit that
generalizes left convergence \cite
{Borgs:Chayes:Lovasz:Sos:Vesztergombi:2006:1,Borgs:Chayes:Lovasz:Sos:Vesztergombi:2008},
a core concept in the graphon theory of limits of dense graphs, to a notion
that is also meaningful for sparse graphs. We show that sampling convergence
both generalizes the metric convergence of
\cite{Borgs:Chayes:Cohn:Holden:2016} and allows us to formalize the notion of
sampling a data set from an infinite size population network; it thereby
connects graph limits and statistical network modeling. Our second main
contribution is that the ad hoc assumption of exchangeability may be replaced
by a more natural equivalent invariance given in terms of the sampling
scheme. This symmetry makes no reference to the point process representation
of random graphs or to the associated notion of exchangeability; this allows
us to understand these ideas as mathematical artifices rather than conceptual
cornerstones of the theory.

We begin by explaining our limit theory as a natural generalization of the
dense graph limit theory. In the setting of dense graphs, one of the core
limit notions is left convergence, the convergence of subgraph densities. In
the course of explaining the connection between exchangeability and graph
limits in the dense graph setting, Diaconis and Janson
\cite{Diaconis:Janson:2007} present the following perspective on left
convergence. Given a graph $G_j$, for each $k \in\Nats$ we draw a random
subgraph $H_{j,k}$ of $G_j$ by selecting $k$ vertices independently at random
and returning the induced subgraph; a sequence $G_1,G_2,\dots$ is left
convergent when, for all $k \in\Nats$, the random graphs $H_{j,k}$ converge
in distribution as $j \to\infty$. Intuitively speaking, this notion of
convergence encodes the idea that two large graphs are similar when it is
difficult to tell them apart by randomly sampling small subgraphs from each.

It is straightforward to see why left convergence is informative only for
dense graph sequences: if the graph sequence $G_1,G_2,\dots$ is sparse then
the probability that a random $k$ vertex subgraph of $G_j$ contains even a
single edge goes to $0$ as $j$ becomes large. The resolution we propose here
is, intuitively speaking, to generalize this sampling scheme in a way that
fixes the target number of \emph{edges} in the randomly sampled subgraph,
instead of the number of vertices.

The first key idea in formalizing this is the following notion for sampling
from a graph, introduced in \cite{Veitch:Roy:2016}. Here, a vertex in a
subgraph of a given graph $G$ is called \defnphrase{isolated} if it is not
contained in any edge (regardless of whether this edge is a loop edge or a
nonloop edge) of the subgraph.

\begin{defn}
A \defnphrase{$p$-sampling} $\mathsf{Smpl} (G,p)$ of a graph\footnote
{Throughout this paper, a graph will be a graph without multiple edges, but
it may not be simple; that is, it may contain edges joining a vertex to
itself. Unless explicitly mentioned, all graphs will be finite.} $G$ is a
random subgraph of $G$ given by including each vertex of $G$ independently
with probability $\min(p,1)$, then discarding all isolated vertices in the
resulting induced subgraph, and finally returning the unlabeled graph
corresponding to this subgraph.
\end{defn}

The critical property that distinguishes $p$-sampling from independent vertex
sampling is that vertices that do not participate in any edges in the vertex
induced subgraph are thrown away. Note that by definition, $\mathsf {Smpl}
(G,p)$ is always unlabeled, whether $G$ is labeled or not.

We may now define our notion of graph limit. Let $\edges(G)$ denote the
number of non-loop edges of a graph $G$.

\begin{defn}
A sequence of graphs $G_1,G_2,\dots$ is \defnphrase{sampling convergent} if,
for all $r \in\Reals_{+}$, the random graphs $\mathsf{Smpl}
(G_j,{r}/{\sqrt{2\edges(G_j)}})$ induced by
${r}/{\sqrt{2\edges(G_j)}}$-sampling of $G_j$ converge in distribution as $j
\to\infty$.
\end{defn}

For the remainder of the introduction, we will restrict our attention to
sequences of simple graphs; loops are treated in the body of the paper.

Sampling convergence can be understood as a modification of left convergence
as follows: we draw an increasing number of vertices as $j\to\infty$ because
if we drew only a fixed number $k$ then the induced graph would be empty in
the limit. Since the number of sampled vertices diverges, we instead fix the
target number of sampled edges. Because we are selecting vertices at random,
the number of edges in the vertex induced subgraph must be random, so a
natural way to fix the size of the sampled subgraph as $j \to\infty$ is to
require the expected number of edges to be constant. This requirement
dictates that each vertex is included with probability proportional to
$1/\sqrt{\edges(G_j)}$; the convention we choose for the proportionality
constant gives
\[
\EE \bigl[\edges \bigl(\mathsf{Smpl} \bigl(G_j,{r}/{\sqrt{2
\edges(G_j)}}\bigr) \bigr) \bigr] = r^2/2
\]
for all $j\in\Nats$. Because the number of sampled vertices goes to infinity
as $j \to\infty$, it is not possible to have convergence in distribution of
the vertex sampled subgraphs. This problem is solved by using $p$-sampling
instead of independent vertex sampling; that is, we simply throw away the
vertices that are isolated in the sampled subgraph.

Our first main result is that the natural limit object of a sampling
convergent sequence is a triple $\W= (I,S,W)$, where $I \in\Reals_{+}$,
$S\colon\Reals_{+}\to\Reals_{+}$ is an integrable function, and the
\emph{graphon} $W\colon\Reals_{+}^2 \to[0,1]$ is a symmetric integrable
function. This object is the (integrable) \defnphrase{graphex} at the heart
of the (sparse) exchangeable graph models. Each graphex defines a
\defnphrase{graphex process} (or
\defnphrase{Kallenberg Exchangeable Graph} in the language of
\cite{Veitch:Roy:2015,Veitch:Roy:2016}), a family of growing random graphs
$(\KEG_s)_{s\in\Reals_{+}}$ with vertices labeled in $\Reals_{+}$. Following
\cite{Borgs:Chayes:Cohn:Holden:2016}, we refer to the label of a vertex as
its \defnphrase{birth {time}}, and to $\KEG_s$ as the
\defnphrase{graphex process} at {time} $s$. For a finite labeled graph
$\KEG_s$, we denote the associated unlabeled graph by $\mathcal{G}
 (\KEG _s ) $. The
sense in which the graphex is the natural limit object is given by
Theorem~\ref{lim_is_det_int_graphex}: for every sampling convergent sequence
$G_1,G_2,\dots$ there is some integrable graphex $\W$ such that, for all
$s\in\Reals_{+}$, $\mathsf{Smpl} (G_j,{s}/{\sqrt{2\edges(G_j)}}) \convDist
\mathcal{G}  (\KEG_s ) $ as $j \to\infty$, where $(\KEG _s)_{s\in\Reals_{+}}$
is generated by $\W$. That is, the limiting distribution of the sampled
subgraph is characterized by the graphex that is the sampling convergent
limit. In this case, we say that $G_j$ is \emph{sampling convergent to $\W$}.

We complete the limit theory by showing that every integrable graphex arises
as the sampling convergent limit of some graph sequence, at least up to
certain equivalencies (Theorem~\ref{graphex_proc_samp_conv}), and by
metrizing the convergence and characterizing the associated metric space
(Theorems~\ref{theorem:met_space_struct} and \ref{theorem:compact}). In
consequence of the former result, the (integrable) graphex process models can
be understood conceptually as originating as the limit objects of sampling
convergence, without any direct appeal to exchangeability (although in fact
our technical arguments lean heavily on exchangeability and the associated
machinery).

This last observation raises the question of whether the graphex processes
can be characterized directly in terms of $p$-sampling, without appeal to
either exchangeability or graph limits. The motivation in
\cite{Veitch:Roy:2016} for the introduction of $p$-sampling was the
observation that a $p$-sampling of $\mathcal{G}  (\KEG_s ) $ is equal in
distribution to $\mathcal{G}  (\KEG_{p s} ) $; that is, this is the sampling
scheme that describes the relationship between graphex process graphs at
different {time}s. We prove in Theorem~\ref{psamp_defining} that this is in
fact a defining property of the graphex process. That is, if
$(G_s)_{s\in\Reals_{+}}$ is a family of unlabeled random graphs such that for
all $s \in\Reals_{+}$ and all $p \in (0,1)$ the $p$-sampling of $G_s$ is
equal in distribution to $G_{p s}$, then there is some graphex $\W$ such that
$G_s \equaldist\mathcal{G}
 (\KEG_{s} ) $
for all $s\in\Reals_{+}$, where $(\KEG_s)_{s\in\Reals_{+}}$ is generated by
$\W$. This gives a formal sense in which this sampling invariance is
equivalent to the notion of exchangeability originally used to define
exchangeable random graphs.

We now turn to explaining the connection between our results and statistical
network modeling, and the relationship to other notions of graph limits.

\subsection{Statistical network modeling}

The major motivation in \cite{Veitch:Roy:2015} for the introduction of
graphex process models was as a tool for the statistical analysis of
network-valued data sets. These models are attractive for this purpose
because they offer a sparse graph generalization of the graphon model and the
exchangeable array framework, which underlie many popular models. In this
setting, the conceptual challenge brought on by exchangeability is that
because it is unclear what the symmetry means in practical terms it is also
unclear what the practical applicability of the models is. In particular, we
would like a clear articulation of the circumstances under which it is
appropriate to model a data set by a graphex process.

Following \cite{Crane:Dempsey:2016:snm}, a statistical model can be
understood as consisting of two parts: a data generating process and a
sampling scheme for collecting a data set from a realization of this process.
In the network setting, this is envisioned as some real world process that
generates a large population graph from which the data set is then somehow
sampled. In order to assess the applicability of a statistical network model,
we should articulate the associated data generation mechanism and sampling
scheme.

The most obvious sampling scheme to associate with the graphex process model
is $p$-sampling. Having assumed $p$-sampling, the question of what data
generating mechanism gives rise to the population is subtle. One obvious
guiding principle is that we ought to be able to make meaningful inferences
about the population on the basis of the sample. For example, if the data
generating process is itself a graphex process with graphex $\W$ then the
sample will be distributed as finite graph generated by $\W$; inferences
about the population then take the form of inferences about~$\W$. However,
the graphex process has some properties that are highly undesirable for a
model of a data generating process. For example, a graphex process can only
grow and, moreover, can grow only by adding edges connecting to vertices that
have never been seen before. As a model for a social network this would mean
that two people who are friends may never stop being friends, and two people
who are not yet friends may never form a link in the future.

In classical statistics, data sets are often envisioned as being drawn
independently from some very large population, often idealized as infinite.
In our setting, the analogous thing is to envision a particular (fixed size)
observation as a draw from a very large population network where each vertex
is included independently with small probability. To formalize the
infinite-size population idealization, consider the limit where the size of
the population, created according to the data generating mechanism, becomes
infinite while the vertex inclusion probability goes to $0$ at a rate that
keeps the size of the observed data set constant. That is, we imagine
$\edges(G_j) \to\infty$ and the inclusion probability $p_j = \Theta
({1}/{\sqrt{\edges(G_j)}} )\mathclose {}$. In this case, a minimal
requirement for the sampled data set to be informative about the limiting
population is that the distribution of the sample should converge. We have
thus been led to the following precept: the data generating mechanism should
give rise to a sequence of population graphs that is sampling convergent.
This is as far as we need go: by Theorem~\ref{lim_is_det_int_graphex}, the
requirement of sampling convergence already implies that the observation is
distributed according to some integrable graphex~$\W$.

The preceding can be summarized as follows:
\begin{quote}
Finite size graphex processes approximate statistical network models that
arise from vertex sampling of a population that is generated according to
some sampling convergent data generating process. In the infinite
population limit, this approximation becomes exact.
\end{quote}

It is worth emphasizing that this is much broader than it may appear at first
glance. For example, this perspective may even be appropriate in situations
where we observe the entire available network, as long as the physical
mechanism generating the network is sampling convergent and the process that
restricts to a finite size observation can be modeled approximately as an
independent sampling of the vertices.

In lectures and as yet unpublished work, P. Orbanz has given a treatment of
the broad idea of defining schemes for statistical network modeling by way of
defining a sampling scheme and studying the models compatible with the
symmetries thereby induced. One perspective on the present paper is that we
work out the realization of this program for $p$-sampling.

\subsection{Graph limits}

Sampling convergence gives a notion of graph limit for deterministic
sequences of unlabeled graphs. We now explain the connection to several other
notions of large graph limit, namely:
\begin{enumerate}
\item[1.] the convergence of sequences of randomly labeled graphs,
\item[2.] the metric convergence of \cite{Borgs:Chayes:Cohn:Holden:2016}, and
\item[3.] the consistent estimation of \cite{Veitch:Roy:2016}.
\end{enumerate}

\subsubsection{Randomly labeled graphs}

The first of these is fundamental to the development of the theory in the
present paper. Exchangeability is a concept of infinite size labeled random
graphs, but the theory of graph limits deals with nonrandom sequences of
graphs. It is then somewhat mysterious why there should be such a close
connection between graph limits and exchangeable random graphs.\looseness=1

In the dense graph setting, this manifested as the development of the theory
of exchangeable arrays \cite{Aldous:1981,Hoover:1979,Kallenberg:2005} on one
hand and the independent development of the theory of dense graph limits
\cite{Borgs:Chayes:Lovasz:Sos:Vesztergombi:2006:1,Lovasz:Szegedy:2006,Lovasz:Szegedy:2007,Borgs:Chayes:Lovasz:Sos:Vesztergombi:2008,Borgs:Chayes:Lovasz:Sos:Vesztergombi:2012}
on the other. The connection between the two perspectives is explained by
\cite{Diaconis:Janson:2007,Austin:2008}, the development of which is roughly
as follows. In the dense graph setting, the popular notions of graph limits
are all equivalent to left convergence, which says that a growing sequence of
graphs $G_j$ converges if, for each fixed graph $F$, the proportion of copies
of $F$ in $G_j$ converges. The first key insight is that this can be phrased
in probabilistic language by viewing left convergence as requiring
convergence in distribution of random subgraphs $H_{j,k}$ drawn by selecting
$k$ vertices independently from $G_j$, for all $k\in\Nats$. The second key
insight is that we may pass from nonrandom sequences of graphs
$(G_j)_{j\in\Nats}$ to sequences of random adjacency matrices
$(A(G_j))_{j\in\Nats}$ by randomly labeling the vertices of each $G_j$ by
$\{1, \dots, \vertices(G_j)\}$; this gives a construction such that for each
fixed $j$ the random adjacency matrix is exchangeable. We then observe that
convergence in distribution of randomly sampled $k$ vertex subgraphs is
equivalent to convergence in distribution of the random adjacency matrices
given by restricting $A(G_j)$ to its upper left $k\times k$ submatrix. Now,
using standard probability theory machinery, distributional convergence of
all size $k$ prefixes is enough for even distributional convergence of
$A(G_j)$ as $j \to\infty$. As one might expect, the limit of $A(G_1), A(G_2),
\dots$ is an infinite exchangeable array. By the Aldous--Hoover theorem,
there is then some graphon $W$ that characterizes the distribution of this
array. This graphon is the same as the left convergent limit of the graph
sequence $G_1, G_2, \dots$.

In the present context, the relationship between nonrandom graph sequences
and sequences of randomly labeled objects is captured as a correspondence
between edge sets and point processes. The point processes will be given in
terms of \emph{adjacency measures}, defined as locally finite measures of the
form $\xi= \sum_{i,j}\delta_{(\theta_i,\theta_j)}$, where the sum goes over
all ordered pairs $i,j$ such that $\{i,j\}$ is an edge of a countable graph
$G$ (possibly containing some loops, that is, edges joining a vertex to
itself) and $\theta_i\in\Reals_{+}$ with $\theta_i \neq\theta_j$ for $i\neq
j$.

\begin{defn}
Let $G$ be a labeled or unlabeled graph and let $s > 0$. A~\defnphrase{random
labeling of $G$ into $[0,s)$} is a random adjacency measure obtained by
labeling the vertices randomly with i.i.d. labels in $[0,s)$.
\end{defn}

For a graph sequence $G_1, G_2, \dots$ it may not be immediately obvious what
the ranges $[0,s_1),[0,s_2),\dots$ of the random labelings should be. Our
choice here is $s_j = \sqrt{2\edges(G_j)}$, which has the virtue that for all
bounded sets $A,B\subseteq\Reals_{+}$ such that $\max(A \cup B) \le s_j$, the
expected number of edges between vertices with labels in $A$ and $B$ is
independent of the graph.

\begin{defn}\label{def:canon-label}
We define the \defnphrase{canonical labeling} $\mathsf{Lbl}(G)$ of a graph
$G$ to be the random labeling of $G$ into $[0,\sqrt{2\edges(G)})$.
\end{defn}

The relationship between sampling convergence of a graph sequence and the
distributional convergence of the canonical labelings is closely analogous to
the relationship between left convergence of a graph sequence and the
distributional convergence of the associated random adjacency matrices. We
show in Section~\ref{sec:samp_lims} that the graph sequence $G_1, G_2, \dots
$ is sampling convergent to $\W$ if and only if the canonical labelings
$\mathsf{Lbl}(G_1), \mathsf{Lbl}(G_2), \dots$ converge in distribution to an
infinite exchangeable point process characterized by $\W$. Indeed, the
machinery of distributional convergence of point processes is core to many of
our main results.

In \cite{Aldous:2009}, a broad program for studying the limits of complex
structures of increasing size is outlined. The basic idea is to define a
notion of sampling on these structures such that for each complex object
$C_j$ we may sample some substructure $D^{(k)}_j$ of size $k$; convergence is
then defined as convergence in distribution of $D^{(k)}_j$ as $j\to \infty$
for all sizes $k$. The natural limit is then the joint distribution of the
limiting object for all sizes $k$. This object will have some symmetries
imposed by the sampling scheme, and so might admit some more compact
representation, which would then be the natural limit object. One perspective
on the present paper is that we realize this program for $p$-samplings of
families of growing graphs.

\subsubsection{Metric convergence}

One of the important tools in the theory of dense graph limits is the cut
distance between two graphs or graphons
\cite{Borgs:Chayes:Lovasz:Sos:Vesztergombi:2006:1}. The cut metric defines a
notion of distance that, essentially, captures how similar two graphs or
graphons look at low resolutions; see Figure~\ref{fig:dilated_emp_graphon}
below. We define cut distance formally in Section~\ref{prelim}. One of the
contributions of \cite{Borgs:Chayes:Cohn:Holden:2016} was to generalize the
cut distance to graphons supported on general $\sigma$-finite spaces, and in
particular for graphons $W\colon\Reals_{+}^2 \to[0,1]$, and to use this
notion to compare two graphs via an embedding of the space of graphs into the
space of graphons $W\colon\Reals_{+}^2 \to[0,1]$, mapping a graph $G$ into
what they called the stretched canonical graphon $W^{G,s}$ of $G$. Using this
embedding, \cite{Borgs:Chayes:Cohn:Holden:2016} then introduced the
``stretched cut distance'' between two graphs as the cut distance between the
stretched canonical graphons of these graphs. That paper developed a theory
of graph limits based on convergence in this stretched cut distance, where
the essential idea is to transform a sequence of graphs into a sequence of
stretched canonical graphons and ask for cut metric convergence of this
sequence; see Figure~\ref{fig:dilated_emp_graphon}. This turns out to
generalize the dense graph cut metric convergence, and the generalized limit
objects are the same generalized graphons that arise as limits in sampling
convergence.

\begin{figure}
\includegraphics[width=0.95\linewidth]{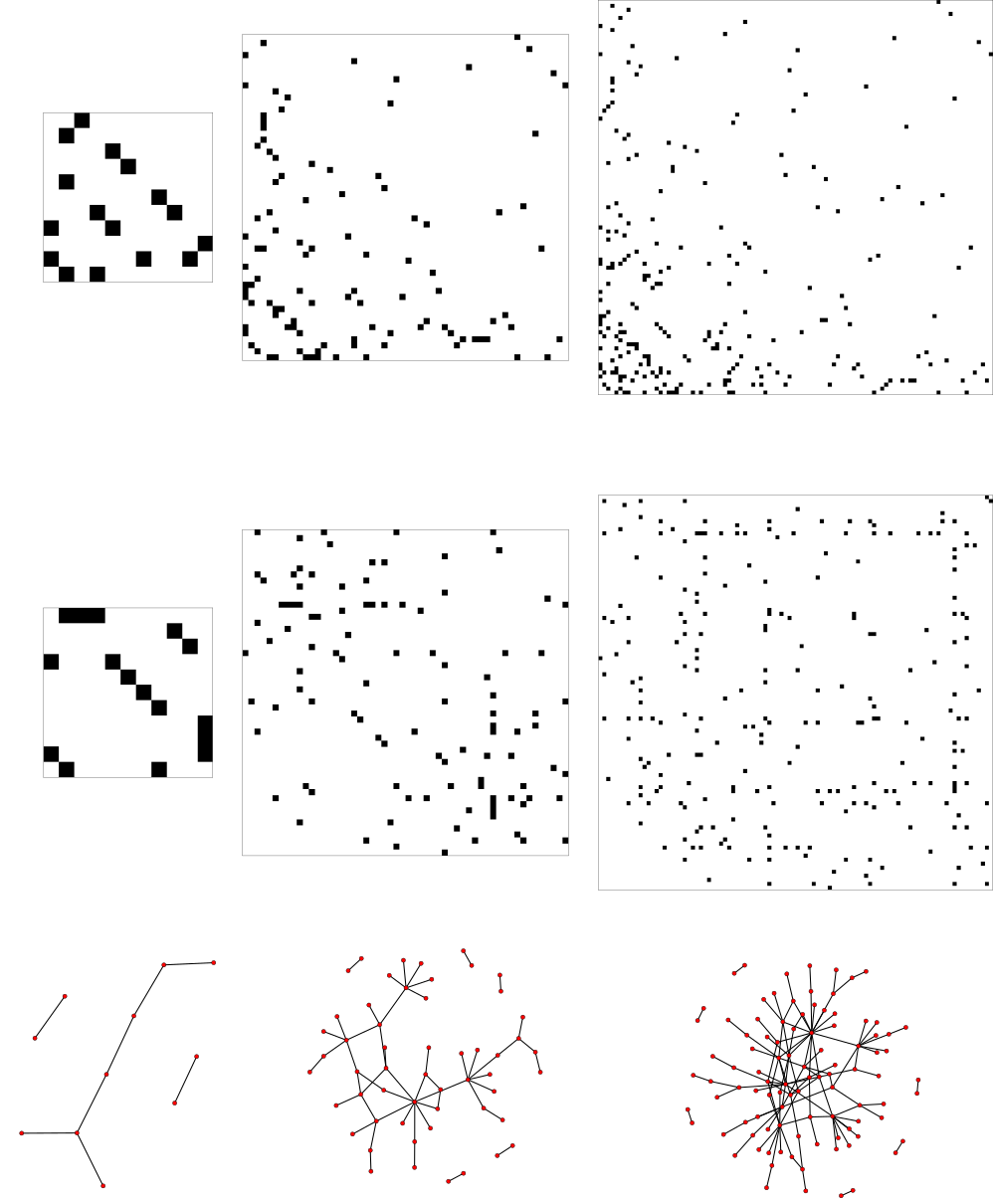}
\caption{Each column shows a graph (bottom row), a corresponding stretched empirical
graphon based on a random labeling of the vertices (middle row), and a
corresponding stretched empirical based on an alternative labeling (top row).
The three graphs are a prefix of a sequence that converges to $(0,0,W)$,
where $W(x,y)=(x+1)^{-2}(y+1)^{-2}$. Intuitively, the top row shows
pixel-picture approximations to the limiting graphon. The cut metric formalizes
this intuition: the graphons are aligned according to some optimal measure
preserving transformation, and the distance between them is then $\sup_{U,V
\subseteq\Reals_{+}}  \lvert\int_{U \times V} W_1(x,y) -
W_2(x,y) \,\intd x \,\intd y  \rvert$, the largest difference in
any patch between the total amounts of
ink in that
patch.}
\label{fig:dilated_emp_graphon}
\end{figure}

In the dense graph setting, convergence in cut distance is equivalent to left
convergence. Given that sampling convergence is an analogue of left
convergence, it is natural to expect that there should be some connection
with convergence under the stretched cut distance. Indeed this is so, and in
Theorem~\ref{cut_implies_sampling} we show that the two notions of
convergence coincide for any graph sequence that is subsequentially
convergent with respect to the stretched cut metric. Thus, in particular,
convergence under the stretched cut distance implies sampling convergence.

Our main motivation for the introduction of sampling convergence is
conceptual clarity. However, it is also worth noting that sampling
convergence (and the associated move from graphons to graphexes) has some
pleasant mathematical properties that stretched cut convergence does not. In
particular, every graph sequence is subsequentially sampling convergent, but
this is not true for stretched cut metric convergence.

\subsubsection{Consistent estimation}\label{GS_conv}

The paper \cite{Veitch:Roy:2016} deals with the problem of estimating $\W$
from a growing sequence of unlabeled graphs $G_1, G_2, \dots$ generated from
$\W$. Simplifying somewhat, the data set is modeled as $G_j = \mathcal{G}
(\KEG_{s_j} ) $ for some sequence $s_1, s_2, \dots$ of observation {time}s
with $s_j \upto\infty$ and $(\KEG_s)_{s\in\Reals_{+}}$ generated by $\W$. The
basic goal of estimation is to produce a sequence of graphexes $\W_{G_1},
\W_{G_2}, \dots$ such that $\W_{G_j} \to\W$ as $j \to \infty$, for some
notion of convergence that formalizes the idea that the distribution defined
by the estimated graphex should be asymptotically the same as the
distribution defined by the true underlying graphex. In the graphex setting,
there are two natural distinct notions of estimation depending on whether the
observation {time}s are included as part of the observation; both of these
are closely related to the sampling convergence of the present paper.

Let $\mathrm{GPD}  (\W,s ) = \Pr(\mathcal{G}  (\KEG _s ) \in\cdot\given\W)$
denote the probability distribution over unlabeled {time} $s$ graphs
generated by $\W$, where GPD stands for graphex process distribution. In the
setting where the {time}s are known, estimation is formalized by defining
$\W_j \to_{\mathrm{GP}}\W$ as $j \to\infty$ to mean $\mathrm{GPD} (\W_j,s )
\to\mathrm{GPD}  (\W,s ) $ weakly as $j \to\infty$, for all $s\in\Reals_{+}$.
That is, $\W_1, \W_2, \dots $ estimates $\W$ if the random graphs generated
by the estimators converge in distribution to the random graphs generated by
$\W$.

For a graph $G$, define $\widehat{W}_{(G,s)}\colon[0,\vertices (G)/s]^2 \to
\{0,1\}$, the dilated empirical graphon of $G$ with dilation $s$, to be the
function given by representing the adjacency matrix\footnote {Implicitly,
this notion requires us to order the vertices of $G$, since otherwise it is
not clear which interval of length $1/s$ should be mapped to a given vertex;
we will choose an arbitrary, fixed ordering for each unlabeled, finite graph
$G$. All our subsequent notions do not depend on the particular ordering, and
hence are well-defined for unlabeled graphs, as well as graphs with vertices
labeled by labels in an unordered set.} of $G$ as a step function where each
pixel has size $1/s \times1/s$; see Figure~\ref{fig:dilated_emp_graphon}. The
estimator used by \cite{Veitch:Roy:2016} in the setting where the {time}s
$s_j$ are included as part of the observation are dilated empirical graphons
of $G_j$ with dilation $s_j$. The basic structure of estimation---map a
sequence of graphs to a sequence of graphons and define a notion of
convergence on the graphons---looks very similar to the development of
(stretched) cut metric convergence, and as with stretched cut convergence,
there is a close connection to sampling convergence: $\widehat{W}_{(G_j,s_j)}
\to_{\mathrm{GP}}\W$ is equivalent to $\mathsf{Smpl} ( {G_j}, \frac{r}{s_j})
\convDist \mathcal{G}  (\KEG_r ) $ for all $r \in \Reals_{+}$. To explain
this connection, we recall a pair of ideas from \cite{Veitch:Roy:2016}
(themselves adapted from \cite{Kallenberg:1999}). First, generating a sample
from $\mathrm{GPD}  (\widehat {W}_{(G_j,s_j)},r ) $ is equivalent to sampling
a subgraph from $G_j$ by selecting $\poiDist (\frac{r}{s_j} \vertices(G_j) )
$ vertices with replacement, and returning the vertex induced subgraph
without its isolated vertices. Second, this with-replacement sampling scheme
is asymptotically equivalent to $r/s_j$-sampling (without replacement). The
equivalence of the two notions of convergence follows immediately.

If $s_1, s_2, \dots$ are not included as part of the observation, then we
require a different approach to estimation. For graphexes of the form $\W=
(0,0,W)$, \cite{Borgs:Chayes:Cohn:Holden:2016} proves that $e(G_j)/s_j^2 \to
\frac{1}{2}\|W\|_1 \as$ as $j\to\infty$, and it is not hard to extend this
result to general integrable graphexes, showing that $e(G_j)/s_j^2 \to
\frac{1}{2}\|\W\|_1 \as$ as $j\to\infty$, where we define the $L^1$ norm of a
graphex $\W= (I,S,W)$ as $\|\W\|_1=\|W\|_1+\frac{1}2\|S\|_1+\frac{1}2I$. This
suggests making a canonical choice of $\|W\|_1 = 1$ and defining the
stretched canonical graphon $W^{G,s}$ of a graph $G$ as the dilated empirical
graphon of $G$ with dilation $\sqrt{2\edges(G)}$. The salient fact, spelled
out in Lemma~\ref{w_wo_pp_equiv}, is that $G_1, G_2, \dots$ is sampling
convergent to $\W$ if and only if $W^{G_j,s} \to_{\mathrm{GP}}\W$ as $j \to
\infty$. In conjunction with our result that graph sequences generated by
$\W$ are sampling convergent to $\W$, this establishes that the stretched
canonical graphon is a consistent estimator for $\W$ if $\|\W\|_1=1$.

Veitch and Roy (2016) follow a different approach. In the case where the
sample {time}s are not included as part of the observation, the most general
observation is the sequence of all distinct (unlabeled) graph structures
taken on by $(\mathcal{G}  (\KEG_s ))_{s\in \Reals_{+}}$; call this
collection $\GS{\KEG}$, the graph sequence of $\KEG$. Intuitively, this is
the structure that remains when the labels are stripped from $(\KEG_s)_{s\in
\Reals_{+}}$. The natural notion of estimation for graph sequences is then to
say that $\W_j \to_{\mathrm{GS}}\W$ as $j \to\infty$ whenever $\GS{\KEG^j}
\convDist \GS{\KEG}$, where $\KEG^j$ is generated by $\W_j$; that is, $\W_1,
\W_2, \dots $ estimates $\W$ if the distribution over unlabeled structures
generated by $\W _j$ is asymptotically equal to the distribution over
unlabeled structures generated by $\W$. It turns out that the empirical
graphon (without any dilation) is a consistent estimator for $\W$ in the
graph sequence sense; so indeed estimation is possible without any knowledge
of $s_1, s_2, \dots$.

Because the empirical graphon relies only on the graph (and not the latent
observation {time}), it can be used to define a notion of graph limit. Let
$G_1, G_2, \dots$ be a sequence of graphs (not necessarily corresponding to a
graphex process), and say that the sequence is \emph{GS convergent} to $\W$,
written $G_j \to_{\mathrm{GS}}\W$ as $j \to\infty$, whenever $W^{G_j}
\to_{\mathrm{GS}}\W$ as $j \to\infty$. \cite{Veitch:Roy:2016}, Lemma~5.6,
shows that as long as $\W\neq 0$, $\W_j \to_{\mathrm{GP}}\W$ as $j \to\infty$
implies also $\W_j \to _{\mathrm{GS}}\W$ as $j \to \infty$, from which it
follows that sampling convergence implies GS convergence. The converse is not
true: the consistent estimation results of \cite{Veitch:Roy:2016} establish
that graph sequences generated by nonintegrable $\W$ are GS convergent to
$\W$, but sampling convergent limits are always integrable. Thus GS
convergence provides an even more general notion of graph limit. However, it
is unclear whether GS convergence has any interpretation or motivation
outside the graphex process theory.

We note that \cite{Janson:2017} includes a discussion of the relationship
between various notions of convergence of graphexes, and is closely related
to the development in this section.

\subsection{Organization}

We give formal definitions and recall some important results in
Section~\ref{prelim}. The basic results for sampling convergence---most
importantly, the limits are graphexes---are given in
Section~\ref{sec:samp_lims}. In Section~\ref{sec:gp_samp_conv}, we prove that
a graph sequence generated by integrable graphex $\W$ is almost surely
sampling convergent to a canonical dilation of $\W$; this has the particular
consequence that (a canonical representative of) every integrable graphex
arises as the sampling limit of some graph sequence. In
Section~\ref{sec:met_and_samp_dist}, we relate convergence in distribution of
graphex sequences generated by $\W_1, \W_2, \dots$ to the metric convergence
of \cite{Borgs:Chayes:Cohn:Holden:2016}. In Section~\ref{sec:metrization}, we
metrize sampling convergence and show that the metric completion of the space
of finite unlabeled loopless graphs is compact (a less elegant statement is
required for loops). In Section~\ref{sec:samp_defines_GP}, we prove that if a
graph-valued stochastic process $(G_s)_{s\in\Reals_{+}}$ has the property
that, for all $p \in(0,1)$ and all $s \in\Reals_{+}$, a~$p$-sampling of $G_s$
is equal in distribution to $G_{p s}$, then there is some graphex $\W$ such
that $G_s = \mathcal{G}
 (\KEG_s ) $
for some $(\KEG_s)_{s\in\Reals_{+}}$ generated by $\W$.

\section{Preliminaries}\label{prelim}

As usual, we denote the set of edges and vertices of a graph $G$ by $E(G)$
and $V(G)$, respectively. In general, $E(G)$ will consist of both loop and
nonloop edges; we denote the number of nonloop edges by $e(G)$ and the number
of loop edges by $\loops(G)$.

Some of the basic objects of interest in this paper are locally finite point
processes on $\Reals_{+}^2$, interpreted as the edge sets of random graphs
with vertices labeled in $\Reals_{+}$. Here, as usual, a \defnphrase {locally
finite point process} on $\Reals_{+}^2$ is a random element $\xi$ of the set
$\mathcal N=\mathcal N(\Reals_{+}^2)$ of locally finite counting measure on
$\Reals_{+}^2$ (i.e., the set of integer valued measures $\xi$ such that $\xi
(A)<\infty$ for all bounded Borel sets $A\subset\Reals_{+}^2$), equipped with
the Borel $\sigma$-algebra inherited from the vague topology, defined as the
coarsest topology for which the maps $\mu\mapsto\int f \,\intd\mu$ are
continuous for all continuous functions with bounded support. As shown in,
for example, \cite{Daley:Vere-Jones:2003:v2}, this topology can be metrized
in such a way that $\mathcal N$ becomes a complete, separable metric space.
Convergence in distribution for locally finite point processes is defined as
weak convergence with respect to this topology, so that $\xi_n\convDist\xi $
is defined by the condition that $\EE[ F(\xi_n)]\to\EE[F(\xi)]$ for all
continuous, bounded functions $F$, with continuity defined with respect to
the vague topology on $\mathcal N$.

\begin{defn}
An \defnphrase{adjacency measure} is a purely atomic, symmetric locally
finite counting measure on $\Reals_{+}^2$ for which all atoms have weight
$1$. A \defnphrase{random adjacency measure} is a locally finite point
process $\xi$ on $\Reals_{+}^2$ such that $\xi$ is almost surely an adjacency
measure.
\end{defn}

We associated a graph with labels in $\Reals_{+}$ to an adjacency measure
$\xi$ by writing it as $\xi= \sum_{i,j}\delta_{(\theta_i,\theta_j)}$,
defining the set $\{(\theta_i,\theta_j)\}$ with $\theta_i \le\theta_j$ as its
edge set, and defining the set of points $\theta_i$ that participate in at
least one edge as its vertex set. Most of the time, we will not distinguish
between the countable graph associated with $\xi$ and the adjacency measure
$\xi$ itself.

The defining property of graphex processes is that, intuitively speaking, the
labels of the vertices of the graphs are uninformative about their structure.
This is formalized by requiring the associated adjacency measure to be
jointly exchangeable.

\begin{defn}
A random adjacency measure $\xi$ is \defnphrase{jointly exchangeable} if $\xi
\circ(\phi\otimes\phi) \equaldist\xi$ for every measure-preserving
transformation $\phi\colon\Reals_{+}\to\Reals_{+}$. It is called an
\defnphrase{extremal exchangeable adjacency measure} if its distribution
cannot be written as a nontrivial superposition of distributions over jointly
exchangeable adjacency measures, that is, if a representation of the
distribution as $\alpha P_1 + (1-\alpha) P_2$ for some $\alpha\in(0,1)$
implies that $P_1=P_2$ a.e.
\end{defn}

A representation theorem for jointly exchangeable random measures on
$\Reals_{+}^2$ was given by Kallenberg
\cite{Kallenberg:2005,Kallenberg_Random_Meas_Plane}. This result was
translated to the setting of random graphs in
\cite{Veitch:Roy:2015,Borgs:Chayes:Cohn:Holden:2016}. Writing $\Lebesgue$ for
Lebesgue measure and $\mu_W(\cdot) = \int _{\Reals_{+}} W(x,\cdot) \,\intd
x$, the defining object of the representation theorem is as follows.

\begin{defn}
\label{def:graphex} A \defnphrase{graphex} is a triple $(\IsoF ,\StarF,W)$,
where $\IsoF\ge0$ is a non-negative real, $\StarF\colon\Reals_{+} \to
\Reals_{+}$ is a measurable function such that $\min(S,1)$ is integrable, and
the \defnphrase{graphon} $W \colon\Reals_{+}^2 \to[0,1]$ is a symmetric,
measurable function that satisfies:
\begin{enumerate}
\item[1.]$\Lebesgue\{ \mu_W = \infty\} = 0$ and $\Lebesgue\{\mu_W > 1\} <
    \infty$,
\item[2.]$\int_{\Reals_{+}^2} W(x,y) 1[ \mu_W(x) \le1 ]  1[\mu_W(y) \le1]
    \,\intd x \,\intd y < \infty$, and
\item[3.]$\int_{\Reals_{+}} W(x,x) \,\intd x < \infty$.
\end{enumerate}
\end{defn}

\begin{remark}
Integrability of $W$ (and its diagonal) is a sufficient but not necessary
condition for it to be a graphon. If the graphon, its diagonal
$W_D(x)=W(x,x)$, and the function $S$ are integrable, then we say that $\W$
is an \defnphrase{integrable graphex}. We set
\[
\llVert \W \rrVert _1= \llVert W \rrVert _1+2 \llVert S
\rrVert _1+ 2 I.
\]
Integrability plays a fundamental role in sampling convergence.
\end{remark}

Each graphex gives rise to a random adjacency measure, which in turn leads to
a graph-valued stochastic process:

\begin{defn}
Given a graphex $\W=(\IsoF,\StarF,W)$, let $\xi$ be the random adjacency
measure
%
\begin{align}
\label{KEGgen} \xi =& \sum_{i,j}1\bigl[
\zeta_{\{i,j\}} \le W(\vartheta_{i},\vartheta _{j})
\bigr]\delta_{\theta_{i},\theta_{j}}
\nonumber
\\
& {} + \sum_{j,k} 1\bigl[ \chi_{jk} \le
\StarF(\vartheta _{j}) \bigr](\delta_{\theta_{j},\sigma_{jk}}+
\delta_{\sigma_{jk},\theta
_{j}})
\nonumber
\\
& {} + \sum_{k} 1[\eta_{k} \le\IsoF](
\delta_{\rho
_{k},\rho'_{k}}+\delta_{\rho'_{k},\rho_{k}}), %
\end{align}
where $(\zeta_{\{i,j\}})$ is a collection of independent uniformly
distributed random variables in $[0,1]$, $\{ (\theta_{j},\vartheta _{j})\} $
and $\{ (\sigma_{ij},\chi_{ij})\} _{j}$, for $i\in\Nats$, are independent
unit rate Poisson processes on $\Reals_{+}^{2}$, and $\{
(\rho_{j},\rho_{j}^{\prime},\eta_{j})\} $ are independent unit rate Poisson
processes on $\Reals_{+}^{3}$, all of them independent of each
other.\footnote{By the results of \cite {Kallenberg_Random_Meas_Plane}, the
integrability conditions from Definition~\ref{def:graphex} imply that the
above sums are a.s. convergent in the vague topology, which in turn implies
that $\xi$ is a.s. locally finite. It is furthermore not hard to show that
a.s., $\xi$ is simple, implying that $\xi$ is an adjacency measure.}

Let $\KEG$ be the (in general countably infinite) graph corresponding to the
adjacency measure $\xi$ defined in (\ref{KEGgen}), and let $\KEG_s$ be the
(a.s. finite) graph corresponding to the adjacency measure $\xi _s(\cdot) =
\xi(\cdot  \cap[0,s]^2)$. The
\defnphrase{graphex process}
associated with graphex $(\IsoF,\StarF,W)$ is the family
$(\KEG_s)_{s\in\Reals_{+}}$. \label{def:graphex-process}
\end{defn}

\begin{remark}
\label{rem:graphex-4} One might be tempted to identify graphexes that are
equal almost everywhere. While this is possible, one must pay attention to
details here, since changing a graphon on the diagonal is only a change on a
set of measure zero, but it changes the graphex process associated to the
graphex. This problem can be easily addressed by introducing the function
$W_D(x)=W(x,x)$, identifying a graphex $(I,S,W)$ with the quadruple
$(I,S,W,W_D)$ and considering the latter as an element of $\Reals_{+} \times
L^0(\Reals_{+},\Lambda)\times L^0(\Reals_{+}^2,\Lambda^2)\times
L^0(\Reals_{+},\Lambda)$.
\end{remark}

\begin{remark}
In \cite{Borgs:Chayes:Cohn:Holden:2016}, a nominally more general definition
of a graphon (and the associated graphon process) is used. There the domain
of $W$ is allowed to be an arbitrary $\sigma$-finite measure space modeling a
space of latent features. The associated process is then defined by labeling
vertices with a pair of labels, namely their birth {time} and their feature.
In the above definition of $(\KEG_s)_{s\in\Reals_{+}}$, the feature space is
assumed to be $\Reals_{+}$, and vertices are just labeled by their birth
{time}s, not a pair of labels. By Theorem~\ref{theorem:graphex_rep_theorem}
below, or the explicit measure-preserving mappings constructed in
\cite{Borgs:Chayes:Cohn:Holden:2016}, every such model is equivalent to one
with latent feature space $\Reals_{+}$, so there is no loss of generality in
our definition. The motivation for the more general notion is that in many
situations there is a natural choice for the space of latent features, and
strong-arming the feature space to $\Reals_{+}$ may obfuscate the conceptual
underpinnings of the model or destroy certain nice theoretical properties
(such as continuity of the graphon). In the present paper, this is not a
concern, so we prefer the simpler definition with graphons defined over
$\Reals_{+}$. We also label vertices in the graphex process
$(\KEG_s)_{s\in\Reals_{+}}$ by just their birth {time}, since in this paper,
the latent feature of a vertex is usually not important. Indeed, as we will
see below, we often remove even the birth {time} label of our vertices,
leading to processes of unlabeled graphs.
\end{remark}

Given Definitions \ref{def:graphex} and \ref{def:graphex-process}, we can now
state the Kallenberg representation theorem.

\begin{theorem}\label{theorem:graphex_rep_theorem}
Let $\xi$ be a random adjacency measure. Then $\xi$ is jointly exchangeable
iff there exists a (possibly random) graphex $\W$ such that $\xi$ is of the
form \eqref{KEGgen}. The graphex $\W$ can be chosen to be nonrandom if and
only if $\xi$ is extremal.
\end{theorem}

\begin{remark}
By a random measurable function $f\colon X \to\Reals$, we mean a measurable
function $f'\colon[0,1] \times X\to\Reals$ and a randomization variable
$\alpha \sim \uniDist[0,1]$ such that $f(x) = f'(\alpha,x)$; see, for
example, \cite{Gikhman:Skorokhod:1969}, Chapter~4. By a random graphex, we
mean a quadruple $(I,S,W,W_D)$ (see Remark~\ref{rem:graphex-4} above) such
that each component is an appropriate random measurable function all sharing
a common randomization variable $\alpha$, and such that the graphex
integrability requirements are almost surely satisfied; by conditioning on a
graphex $\W$ we mean conditioning on the randomization parameter $\alpha$. We
separate out the diagonal of the graphon so that two graphexes that are equal
a.e. generate the same distribution over adjacency measures; this sidesteps
some measurability technicalities.
\end{remark}

We will often have occasion to refer to the unlabeled finite graph associated
with a finite adjacency measure.

\begin{defn}
Let $\xi$ be a finite adjacency measure. The \defnphrase{unlabeled graph
associated with $\xi$} is $\mathcal{G}  (\xi ) $.
\end{defn}

Similarly, we will often want to move from unlabeled graphs to adjacency
measures. To do so, we must invent labels for the vertices; a simple scheme
is to produce labels independently and uniformly in some range:

\begin{defn}
Let $G$ be a graph with edge set $E$, and let $s > 0$. A
\defnphrase{random labeling of $G$ into $[0,s)$}, denoted
$\mathsf{Lbl}_{s}(G,\{U_i\})$, is a random adjacency measure
$\mathsf{Lbl}_{s}(G,\{U_i\}) = \sum_{(i,j) \in E} \delta_{(U_i,U_j)}$, where
the sum contains both orientations of each nonloop edge and $U_i
\distiid\uniDist[0,s)$ for each vertex $i$ in $G$. Where there is no risk of
confusion, we will write $\mathsf{Lbl}_{s}(G)$ for
$\mathsf{Lbl}_{s}(G,\{U_i\})$ where $U_i \distiid \uniDist[0,s)$ for all
vertices $i$, independently of everything else. The random labeling is called
a
\defnphrase{canonical labeling of $G$} and denoted by $\mathsf
{Lbl}(G)$ if $s=\sqrt{2e(G)}$.
\end{defn}

Note that for an unlabeled graph, we need to fix a labeling of the vertices
of $G$ to define $\mathsf{Lbl}_{s}(G,\{U_i\})$; since the distribution of
$\mathsf{Lbl}_{s}(G,\{U_i\})$ is independent of what labeling we chosen for
$G$, the choice of this labeling is irrelevant.

\subsection{Graph limits}

We now recall some important definitions and results on the metric
convergence of \cite{Borgs:Chayes:Cohn:Holden:2016}, specializing to the case
of graphons defined over $\Reals_{+}^2$ and sequences of simple graphs.

There are two main notions of distance between integrable graphons that we
will need. The first is a modification of the $L^1$ distance that accounts
for the fact that graphons have a natural equivalence under measure
preserving transformations. For $\psi\colon\Reals_{+}\to\Reals_{+}$, we let
$W^\psi(x,y) = W(\psi(x),\psi(y))$.

\begin{defn}
The \defnphrase{invariant $L^1$ distance} between integrable graphons $W_1,
W_2$ is $\delta_1(W_1,W_2) = \inf_{\psi_1,\psi_2}\|W_1^{\psi_1} -
W_2^{\psi_1}\|_1$, where the infimum is over all measure-preserving
transformations $\psi_j\colon\Reals_{+}\to\Reals_{+}$ for $j=1,2$.
\end{defn}

Intuitively, the invariant $L^1$ distance lines up the two graphons as
closely as possible and then takes the $L^1$ distance between them.

The invariant $L^1$ distance is too stringent of a notion for many cases of
interest. In particular, it is obviously impossible to approximate a general
graphon by a $\{0,1\}$-valued graphon under that notion of distance. The
weakened distance we use is as follows.

\begin{defn}
The \defnphrase{cut distance} between two integrable graphons $W_1, W_2$ is
\[
\delta_\square(W_1,W_2) = \inf
_{\psi_1,\psi_2} \sup_{U,V
\subseteq\Reals_{+}} \biggl\lvert
\int_{U \times V} W_1^{\psi_1}(x,y) -
W_2^{\psi_2}(x,y) \,\intd x \,\intd y \biggr\rvert,
\]
where the infimum is over all measure-preserving transformations
$\psi_j\colon\Reals_{+}\to\Reals_{+}$ for $j=1,2$ and the supremum is over
Borel sets $U,V \subseteq\Reals_{+}$.
\end{defn}

Intuitively, the cut distance lines up two graphons as closely as possible,
then ``smears them out'' so that they are close in the cut sense if their
mass on every rectangular region is close. This allows a $\{0,1\}$-valued
graphon to approximate an arbitrary graphon as a pixel-picture approximation
to a grayscale image; see Figure~\ref{fig:dilated_emp_graphon}.

The cut metric defines a form of convergence for sequences of integrable
graphons. To lift this to convergence of sequences of graphs, we need a
canonical way to map graphs to graphons.

\begin{defn}
The \defnphrase{empirical graphon} $W^{G}\colon[0,1]^2 \to \{0,1\}$ of a
graph $G$ is the function produced by partitioning $[0,1]^2$ into a
$\vertices(G) \times\vertices(G)$ grid and setting square $(i,j)$ to take
value $1$ if edge $(i,j)$ is included in $G$, and $0$ otherwise.
\end{defn}

The empirical graphon is the ``right'' mapping in the dense graph setting,
but it needs a modification in the sparse graph setting.

\begin{defn}
The \defnphrase{stretched canonical graphon} $W^{G,s}\colon \Reals_{+}^2
\to\{0,1\}$ of a graph $G$ is defined to be
\[
W^{G,s}(x,y) = W^{G} \bigl( \bigl\llVert W^G
\bigr\rrVert _1^{1/2} x, \bigl\llVert W^G \bigr
\rrVert _1^{1/2} y \bigr)
\]
if $x,y\in [0,\|W^G\|_1^{-1/2} )$ and $W^{G,s}(x,y)=0$ otherwise.
\end{defn}

See Figure~\ref{fig:dilated_emp_graphon}. The basic intuition for this
definition is that $\|W^{G,s}\|_1 = 1$, so that if $H_r  \sim \mathrm{GPD}
(W^{G,s},r ) $ then $\EE[\edges(H_r)] = r^2/2$. That is, the canonical
stretched graphon is stretched such that the corresponding graphon process
has a fixed ``growth rate'' irrespective of the graph used as input.

We now have an obvious notion for convergence of graph sequences.

\begin{defn}
A graph sequence $G_1, G_2, \dots$ \defnphrase{converges in stretched cut
distance} to $W$ if $\delta_\square(W^{G_j,s}, W) \to0$ as $j \to \infty$.
\end{defn}

A key property of stretched cut convergence is, by
\cite{Borgs:Chayes:Cohn:Holden:2016}, Theorem~28, if $(G_s)_{s\in\Reals_{+}}$
is a graphon process generated by $W$ such that $\|W\|_1 = 1$ then, almost
surely, $\delta_\square(W^{G_s,s}, W) \to0$ as $s \to\infty$. In this paper,
we will establish the analogous result for sampling convergence.

The space of graphons equipped with the cut metric is not relatively compact,
so a further restriction is needed for subsequential convergence.

\begin{defn}\label{def:tail-reg}
A set of graphons $\{W_j\}_{j\in\Nats}$ has \defnphrase{uniformly regular
tails} if for every $\epsilon> 0$ there is some $M > 0$ such that for each
$j$ there is some $U_j \subseteq\Reals_{+}$ with $ \lvert U_j
 \rvert < M$ and
$\|W_j - W_j 1_{U_j \times U_j}\|_1 < \epsilon$ for all $j$. A~set of graphs
$\{G_j\}_{j\in\Nats}$ is said to have uniformly regular tails if
$\{W^{G_j,s}\}_{j\in\Nats}$ has uniformly regular tails.
\end{defn}

The main results about sequences with uniformly regular tails are that any
such sequence has a further subsequence that converges in cut distance---that
is, any such sequence is relatively compact in cut distance---and that any
sequence that is convergent in cut distance also has uniformly regular tails
(see \cite{Borgs:Chayes:Cohn:Holden:2016}, Corollary~17). Intuitively
speaking, the uniformly regular tail condition requires the graphs to have
``dense cores,'' where a constant fraction of all edges of $G_j$ occur
between only $\Theta(\sqrt{\edges(G_j)})$ vertices.

\subsection{Sampling}

Sampling convergence requires subgraphs sampled from $G_1, \break G_2, \dots$
to converge in distribution to finite size random graphs given by dropping
the labels from finite size graphex processes. It is most convenient to
express this by introducing notation for the distributions of these graphs.

\begin{defn}
The \defnphrase{canonical sampling distribution} with parameters $s$ and $G$
is $\mathrm{SmplD}  (G,s ) (\cdot)=\Pr (\mathsf {Smpl} (G,s /\sqrt
{2\edges(G)}) \in\cdot  |  G ) $.
\end{defn}

\begin{defn}
Let $(\KEG_s)_{s\in\Reals_{+}}$ be a graphex process generated by $\W $, with
$\W$ possibly random. The \defnphrase{unlabeled graphex process distribution}
with parameters $\W$ and $s$ is $\mathrm{GPD}  (\W,s )
(\cdot)=\Pr(\mathcal{G}  (\KEG _s ) \in \cdot\given\W)$.
\end{defn}

Instead of $\mathsf{Smpl}  (G_{j},s /\sqrt {2\edges(G_{j})})
\convDist\mathcal{G}  (\KEG_s ) $ as $j \to\infty$, we may now equivalently
write $\mathrm{SmplD} (G_j,s ) \to \mathrm{GPD}  (\W,s ) $ weakly as $j
\to\infty$. This has the advantages that it makes the limit object $\W$
explicit, it does not introduce extraneous randomness (nonrandom graphs are
mapped to nonrandom probability measures), and it allows us to deal easily
with cases where the graph sequence or $\W$ is random.

\begin{defn}\label{def:smpl-limit}
Let $\W$ be a graphex and let $G_1,G_2,\dots$ be a sequence of graphs. We say
that $G_1,G_2,\dots$ is \emph{sampling convergent} if $\mathrm {SmplD}
(G_j,s ) $ converges weakly as $j \to\infty$ for every $s$. We say that the
sequence is \emph{sampling convergent to $\W$} or \emph{sampling convergent
with limit $\W$} if $\mathrm{SmplD}  (G_j,s ) \to\mathrm{GPD}  (\W,s ) $
weakly as $j \to\infty$ for every $s$.
\end{defn}

We will make use of another sampling scheme that is asymptotically equivalent
to $p$-sampling with $p={r}/{\sqrt{2\edges(G_j)}}$. The alternative sampling
scheme will again be defined for labeled or unlabeled input graphs and, as in
the case of $p$-sampling, outputs an unlabeled graph, whether the input graph
is labeled or not.

\begin{defn}
A \defnphrase{with-replacement $p$-sampling} $\mathsf{SmplWR}(G,p)$ of a
graph $G$ is an unlabeled graph obtained by sampling $\poiDist(p
\vertices(G))$ vertices from $G$ with replacement and returning the
vertex-induced ``subgraph'' without its isolated vertices. Explicitly, if
$x_1,\dots,x_k$ are the vertices of $G$ chosen by sampling with replacement,
we first form a graph on $[k]$ by joining $i,j\in[k]$ by an edge whenever
$(x_i,x_j)$ is an edge in $G$ (whether that edge was a loop or an edge
between two different vertices), then deleting isolated vertices, and then
returning the resulting graph without its labels.
\end{defn}

The motivation for this definition is the observation that generating a
{time}-$r$ graph according to the canonical stretched empirical graphon of
$G$ is equivalent to a with-replacement ${r}/{\sqrt{2\edges(G)}}$-sampling of
$G$, in the sense that
\[
\mathrm{GPD} \bigl(W^{G,s},r \bigr) = \Pr \biggl(\mathsf{SmplWR}
\biggl({G},\frac{r}{\sqrt{2\edges(G)}}\biggr) \in\cdot | G \biggr).
\]
This observation (essentially) originates in \cite{Veitch:Roy:2016}, in the
context of the study of the empirical graphons of $G_1, G_2, \dots$ generated
by $\W$ at {time}s $s_1, s_2, \dots$, and stretched out by a factor of $s_j$
at each stage (instead of $\sqrt{2\edges(G_j)}$). In our setting, there is a
small additional complication arising from possible loops in $G$.

Recall that $\loops(G)$ denotes the number of loops of a graph $G$.
Asymptotic equivalence of with and without replacement sampling translates to
the following lemma.

\begin{lem}\label{lem:coupling}
Let $G$ be a random graph with $\edges$ edges and $\loops$ loops, and let
$p\leq1$. Then $\mathsf{Smpl} (G,p)$ and $\mathsf{SmplWR} (G,p)$ can be
coupled in such a way that a.s.,
\begin{align*}
\Pr\bigl(\mathsf{Smpl} (G,p)\neq\mathsf{SmplWR} (G,p) \given G\bigr) &\leq4
p^3 \edges+ 2 p^2 \loops.
\end{align*}
\end{lem}

\begin{proof}
Note that $\EE[\vertices(\mathsf{Smpl} (G,p)) \given G] \le2 p^2 \edges+ p
\loops$. \cite{Janson:2017}, Lemma~5.2, establishes that there exists a
coupling such that, almost surely,
\[
\Pr\bigl(\mathsf{Smpl} (G,p)\neq\mathsf{SmplWR} (G,p) \given\mathsf{Smpl} (G,p),
G\bigr) \le 2 p \vertices\bigl(\mathsf{Smpl} (G,p)\bigr).
\]
The result follows immediately.

We note that \cite{Janson:2017}, Lemma~5.2, does not explicitly treat graphs
with loops, but the proof given there applies verbatim to this case.
\end{proof}

\begin{lem}\label{samp_equiv_poi_wr}
Let $G_1, G_2, \dots$ be a sequence of (possibly random) graphs such that
a.s., $G_j$ is finite, $\edges(G_j) \to\infty$, and $\loops(G_j) =
O(\sqrt{\edges(G_j)})$ as $j\to\infty$. Then a.s. with respect to the
randomness of the sequence $G_1, G_2, \dots$, we have that $\mathsf{Smpl}
(G_j,{r}/{\sqrt{2\edges(G_j)}}) \convDist H$ for some finite random graph $H$
if and only if\break $\mathsf{SmplWR} (G_j,{r}/{\sqrt{2\edges(G_j)}}) \convDist H$.
\end{lem}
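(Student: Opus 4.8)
The plan is to deduce this from the coupling bound in \cref{lem:coupling} together with a standard fact about convergence in distribution under vanishing total variation distance. First I would set $p_j = r/\sqrt{2\edges(G_j)}$ and observe that, by hypothesis, $p_j \to 0$ a.s.\ (since $\edges(G_j) \to \infty$), and moreover $\edges(G_j) p_j^2 = r^2/2$ is constant, $\sqrt{2\edges(G_j)}\, p_j = r$ is constant, and $\loops(G_j) p_j = O(\sqrt{\edges(G_j)}) \cdot O(1/\sqrt{\edges(G_j)}) = O(1)$ is bounded. Plugging these into the second bound of \cref{lem:coupling}, we get that on the almost sure event where these asymptotics hold,
\[
\Pr\bigl(\samp[p_j]{G_j} \neq \sampWR[p_j]{G_j} \given G_1, G_2, \dots\bigr) \leq \bigl(r^2 + 3r + O(1) + 4\bigr) p_j \to 0
\]
as $j \to \infty$. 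Thus, conditionally on the sequence, the two random unlabeled graphs can be coupled to agree with probability tending to $1$, so their conditional total variation distance tends to $0$.

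Next I would invoke the elementary fact that if two sequences of random elements of a common (Polish, or here even countable discrete) space satisfy $\dtv{\mu_j - \nu_j} \to 0$, then $\mu_j \convDist H$ if and only if $\nu_j \convDist H$: indeed, for any bounded continuous (here, any bounded) test function $F$, $|\EE F(X_j) - \EE F(Y_j)| \le 2\|F\|_\infty \dtv{\mu_j - \nu_j} \to 0$. Since unlabeled finite graphs form a countable discrete space, convergence in distribution is just pointwise convergence of the probability mass functions, and this comparison applies verbatim. Applying it to $\mu_j = \sampD{G_j}{r}$-type laws (the conditional law of $\samp[p_j]{G_j}$) and the conditional law of $\sampWR[p_j]{G_j}$ gives the claimed equivalence, conditionally on the realization of $G_1, G_2, \dots$ from the almost sure event identified above.

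The only real points requiring care are bookkeeping rather than substance. One is handling the randomness of the sequence $(G_j)$ cleanly: the coupling in \cref{lem:coupling} is stated conditionally on $G$, so I would work throughout on the almost sure event where $G_j$ is finite, $\edges(G_j)\to\infty$, and $\loops(G_j)=O(\sqrt{\edges(G_j)})$, and on that event treat the sequence as fixed, so that both "$\convDist$" statements and the total variation bound are all statements about deterministic sequences of distributions. The second is the trivial observation that the coupling in \cref{lem:coupling} must be realized jointly for all $j$ (e.g.\ with independent coupling randomness across $j$), which is automatic. I do not anticipate a genuine obstacle here; the substantive content is entirely in \cref{lem:coupling}, and this lemma is a short deduction from it.
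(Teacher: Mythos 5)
Your proposal is correct and takes essentially the same route as the paper, which simply invokes \cref{lem:coupling} with $p = r/\sqrt{2\edges(G_j)}$ and notes the bound vanishes; your explicit computation that the second bound becomes $(r^2 + 3r + O(1) + 4)p_j \to 0$ and the subsequent total-variation argument are exactly the details the paper leaves implicit.
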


\begin{proof}
The proof follows immediately from the previous lemma by setting
$p=r/\sqrt{2\edges(G_j)}$, $\edges=\edges(G_j)$, and $\loops=\loops(G_j)$.
\end{proof}

\subsection{Coupling}

Much of this paper involves convergence of probability measures. We will
often make use of coupling techniques in order to establish these results;
see \cite{denHollander:2012} for an overview. A~coupling of probability
measures $P$ and $P'$, both on the measurable space $(E,\mcl{E})$, is a
probability measure $\widehat{P}$ on $(E\times E, \sigma(\mcl{E} \times
\mcl{E}))$ with marginals $P$ and $P'$. Such a coupling $\widehat{P}$ bounds
the total variation distance $ \llVert P-P' \rrVert _{\mathrm{TV}}$ between
$P$ and $P'$ by
\[
\bigl\llVert P-P' \bigr\rrVert _{\mathrm{TV}} \le\widehat{P}
\bigl(X \neq X'\bigr),
\]
where $X$ and $X'$ are random variables on $E$ with distributions $P$ and
$P'$ (which we then view as functions of the two coordinates on $E \times
E$). Moreover, if $E$ is a Polish space, then there exists some coupling that
saturates this bound.

It is often convenient to describe a coupling as a scheme for jointly
sampling $X$ and $X'$. In this case, we may refer to the coupling as a
coupling of the random variables. In this case, the basic proof technique is
to describe an algorithm for jointly sampling $X$ and $X'$, and then bound
$\Pr(X \neq X')$ under this algorithm.

\subsection{Distributional convergence of point processes}

Our technical development relies on techniques from point process theory,
particularly the theory of distributional convergence of point processes
viewed as random measures. Good references include
\cite{Daley:Vere-Jones:2003:v1,Daley:Vere-Jones:2003:v2} for a friendly
introduction and \cite{Kallenberg:2002}, Chapter~16, for a very general
treatment.

For our purposes, the main result needed to understand distributional
convergence of point processes is the following theorem.

\begin{theorem}[\cite{Daley:Vere-Jones:2003:v2}, Theorem~11.1.VII]
Let $\xi,\xi_1,\xi_2,\dots$ be locally finite point processes on
$\Reals_{+}^2$. Then $\xi_j \convDist\xi$ as $j \to\infty$ if and only if
\[
\bigl(\xi_j(B_1), \dots, \xi_j(B_n)
\bigr) \convDist\bigl(\xi(B_1), \dots, \xi(B_n)\bigr)
\]
as $j \to\infty$, where $B_i \subseteq\Reals_{+}^2$ are bounded Borel sets
such that $\Pr(\xi(\partial B_i) = 0) =1$.
\end{theorem}

That is, convergence in distribution of point processes is just convergence
in distribution of the counts on arbitrary collections of test sets. There
are generally consistency requirements between the counts on different test
sets, and in consequence it actually suffices to check convergence on a
smaller collection.

\section{Sampling limits of graph sequences}\label{sec:samp_lims}

In this section, we show that for graph sequences with size going to infinity
the limits of sampling convergence are graphexes.

The main technical idea is to use the canonical labeling to introduce a map
from graphs to probability distributions over point processes, and then
establish the claimed results by way of tools from the theory of
distributional convergence of point processes. Recall that the canonical
labeling of a graph $G$ is a random adjacency measure corresponding to
independently randomly labeling each vertex of $G$ uniformly in
$[0,\sqrt{2\edges(G)})$. We introduce notation for the probability
distribution of the random labeling.

\begin{defn}
The \defnphrase{embedding} of a (possibly random) graph $G$ is a probability
distribution over point processes on $[0,\sqrt{2\edges(G)})^2$ given by
\[
\mathrm{embed}(G) (\cdot) = \Pr\bigl(\mathsf{Lbl} (G) \in\cdot\given G\bigr).
\]
\end{defn}

Our first lemma relates distributional convergence of the point processes
given by the canonical random labelings of $G_1, G_2,\dots$ to sampling
convergence of the graph sequence. Intuitively, sampling convergence is
equivalent to distributional convergence of the point processes, and the
limiting random graph of $r/\sqrt{2\edges(G_j)}$-sampling is isomorphic to
the graph given by restricting the limiting adjacency measure to vertices
with label less than $r$. To parse the lemma statement, note that sampling
convergence may be written as, for all $r\in\Reals_{+}$, $\mathrm {SmplD}
(G_j,r ) $ converges weakly as $j \to\infty$. It may also be helpful to note
that part of our goal in this section is to establish that the limit
$\eta_{r}$ below is equal to $\mathrm{GPD}  (\W,r ) $ for some integrable
graphex $\W$.

\begin{lem}\label{cannon_emb_convs}
Let $G_1, G_2, \dots$ be a graph sequence with $\edges(G_j) \to \infty$ as
$j\to\infty$. The graph sequence is sampling convergent if and only if the
sequence\break
$\mathrm{embed}(G_1), \mathrm{embed}(G_2), \dots$ converges weakly,
that is, if and only if the random labelings converge in distribution.
Further, denoting the limiting distributions of $\mathrm{SmplD}  (G_j,r ) $
and $\mathrm{embed}(G_{j})$ by $\eta_{r}$ and $\labelLim$, respectively, if
$H_r  \sim \eta_{r}$ and $\xi \sim \labelLim$ then $\mathsf{Lbl}_{r}(H_r)
\equaldist\xi([0,r)^2 \cap \cdot)$.
\end{lem}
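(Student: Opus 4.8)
The plan is to route everything through the restriction of the canonical labeling $\RLabel{G_j}$ to the window $[0,r)^2$: this restriction encodes exactly the $p$-sampling at $p=r/\sqrt{2\edges(G_j)}$, so the windows can be handled one $r$ at a time and then reassembled into a statement about $\RLabel{G_j}$ on all of $\NNReals^2$ via the point-process convergence theorem recalled above. \emph{Key finite-level identity.} For $j$ large enough that $\sqrt{2\edges(G_j)}\ge r$, so that $p_j:=r/\sqrt{2\edges(G_j)}\le1$, the pair $\bigl(\RLabel{G_j}(\,\cdot\cap[0,r)^2),\,\samp[p_j]{G_j}\bigr)$ has, conditionally on $G_j$, the same law as $\bigl(\RLabel[r]{F},F\bigr)$ with $F\dist\sampD{G_j}{r}$. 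To see this, condition on the set $S$ of vertices of $G_j$ whose canonical label lies below $r$: since the labels are i.i.d.\ $\uniDist[0,\sqrt{2\edges(G_j)})$, the set $S$ is exactly a $p_j$-Bernoulli sample of the vertices of $G_j$, and given $S$ its labels are i.i.d.\ $\uniDist[0,r)$ and independent of $S$; the restricted measure depends only on those labels and on the edges of the induced subgraph $G_j[S]$, and the unlabeled graph it encodes is $G_j[S]$ with its isolated vertices deleted, i.e.\ $\samp[p_j]{G_j}$. In particular $\RLabel{G_j}(\,\cdot\cap[0,r)^2)$ is distributed as a random labeling into $[0,r)$ of a $\sampD{G_j}{r}$-distributed graph.

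Second, I would prove a purely finite statement: for a sequence $(F_j)$ of finite random graphs with no isolated vertices, $F_j\convDist H$ for some finite random graph $H$ if and only if the point processes $\RLabel[r]{F_j}$ converge weakly on $[0,r)^2$, and then the limit is the law of $\RLabel[r]{H}$. The forward direction is a push-forward along the fixed labeling kernel: pointwise convergence of the (discrete) graph laws upgrades to total-variation convergence by Scheff\'e's lemma, and total-variation convergence of the mixing weights passes through the kernel. The reverse direction cannot go through a continuous-mapping argument, since recovering the unlabeled graph from an adjacency measure is \emph{not} vaguely continuous; instead, weak convergence of $\RLabel[r]{F_j}$ forces $\RLabel[r]{F_j}([0,r]^2)=2\edges(F_j)+\loops(F_j)$ to be tight ($\eta\mapsto\eta([0,r]^2)$ being upper semicontinuous), hence $\vertices(F_j)$ is tight and $F_j$ is a tight sequence of random elements of the countable set of finite graphs. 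Each distributional subsequential limit $H$ then satisfies $\RLabel[r]{H}\equaldist(\text{the point-process limit})$ by the forward direction, and since $H\mapsto\mathrm{Law}(\RLabel[r]{H})$ is injective --- its left inverse $\eta\mapsto\CAM{\eta}$ is well defined almost surely, the labels being almost surely distinct --- all subsequential limits coincide, so $F_j\convDist H$.

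Now the two directions of the lemma. If $G_1,G_2,\dots$ is sampling convergent, then for every $r$ the graphs $\samp[p_j]{G_j}$ converge in distribution, hence by the finite statement the point processes $\RLabel[r]{\samp[p_j]{G_j}}\equaldist\RLabel{G_j}(\,\cdot\cap[0,r)^2)$ converge weakly; since every bounded Borel set lies in some $[0,r)^2$, the joint laws of the counts $\RLabel{G_j}(B_1),\dots,\RLabel{G_j}(B_n)$ converge and the family $\{\RLabelD{G_j}\}$ is therefore tight, so $\RLabelD{G_j}$ converges weakly (a point process is determined by the joint laws of its counts on bounded Borel sets). For the converse, suppose $\RLabelD{G_j}\to\labelLim$ and $\xi\dist\labelLim$, and write $\pi_r(\eta)=\eta(\,\cdot\cap[0,r)^2)$. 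The obstacle is that $\pi_r$ is only almost-surely continuous at point processes placing no mass on the lines $\{x=r\}\cup\{y=r\}$, and a priori this could fail for $\labelLim$ on a countable set of $r$; were it to fail, $\RLabel{G_j}(\,\cdot\cap[0,r)^2)$ could oscillate and sampling convergence would break at that $r$, so ruling this out is essential and is the crux.

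This is exactly where the normalization $s_j=\sqrt{2\edges(G_j)}$ is used. A direct first-moment estimate gives, for every $r,T,\delta>0$, $\EE\bigl[\RLabel{G_j}\bigl((r-\delta,r+\delta)\times[0,T)\bigr)\bigr]\le 2\delta T+\sqrt2\,\delta\,\loops(G_j)/\sqrt{\edges(G_j)}=O(\delta)$ uniformly in $j$, the loop term being controlled by $\loops(G_j)=O(\sqrt{\edges(G_j)})$. Since $\eta\mapsto\eta(U)$ is lower semicontinuous on open $U$, this yields $\EE[\xi(\{x=r\}\times[0,T))]\le\lim_{\delta\downarrow0}O(\delta)=0$, so $\xi$ almost surely places no mass on any coordinate line; hence $\pi_r$ is almost-surely continuous under $\labelLim$ for every $r$, $\RLabel{G_j}(\,\cdot\cap[0,r)^2)\convDist\xi(\,\cdot\cap[0,r)^2)$ for all $r$, and by the finite statement and the key identity $\samp[p_j]{G_j}\convDist H_r$ with $H_r\dist\sampLim{r}$ and $\RLabel[r]{H_r}\equaldist\xi(\,\cdot\cap[0,r)^2)$. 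This gives both sampling convergence and, comparing the two descriptions of the limit, the asserted equality $\RLabel[r]{H_r}\equaldist\xi([0,r)^2\cap\,\cdot)$. The remaining steps are routine bookkeeping with weak convergence of point processes; I expect the uniform first-moment bound --- and the resulting promotion from co-countably many $r$ to all $r$ --- to be the only genuinely delicate point.
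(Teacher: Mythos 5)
Your proof follows essentially the same route as the paper's: the key identity $\RLabel{G_j}(\cdot\cap[0,r)^2)\equaldist\RLabel[r]{\samp[p_j]{G_j}}$, the equivalence between distributional convergence of finite unlabeled graphs and of their random labelings into $[0,r)$ (which the paper simply cites from Veitch and Roy and you prove from scratch), and the window-by-window assembly of the limiting point process via Kolmogorov consistency. Your only additions are details the paper leaves implicit under ``the reverse direction follows similarly''---notably the almost-sure continuity of the restriction map, where your first-moment bound quietly uses $\loops(G_j)=O(\sqrt{\edges(G_j)})$; this is not a stated hypothesis, but it does follow from either side of the equivalence, so the argument stands.
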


\begin{proof}
Suppose first that the sequence is sampling convergent. Fix $r$ and notice
that, for $\sqrt{2\edges(G_j)} > r$, under the canonical labelings of $G_j$
each vertex has a label in $[0,r)$ independently with probability
$r/\sqrt{2\edges(G_j)}$. Moreover, restricted to $[0,r)$, each vertex has a
$U[0,r)$ i.i.d. label. Denote this restriction by $\mathsf{Lbl}(G_j)| _r$. We
have just shown that $\mathsf{Lbl}(G_j)| _r \equaldist
\mathsf{Lbl}_r(\mathsf{Smpl}(G_j,r/{\sqrt{2\edges(G_j)}})$.
\cite{Veitch:Roy:2016}, Lemma~4.13, shows that if $G',G'_1,G'_2,\dots$ are
unlabeled random graphs then $G'_j \convDist G'$ as $j \to\infty$ if and only
if $\mathsf{Lbl}_r({G'_j}) \convDist \mathsf{Lbl}_r({G'})$ as $j \to\infty$.
Hence, by the assumption of sampling convergence, $\mathsf{Lbl}(G_j)| _r$
converges in distribution as $j \to \infty$.

Next, we lift this convergence on arbitrary prefixes $\mathsf {Lbl}(G_j)| _r$
to convergence of the entire point process. We first identify the limiting
point process $\xi$. To do so, we let $B_1, \dots, B_n \subseteq\Reals_{+}
^2$ be bounded Borel sets, choose $r$ such that $B_1, \dots, B_n
\subseteq[0,r)^2$, and demand that
\[
\bigl\{\xi(B_1), \dots, \xi(B_n)\bigr\} \equaldist\lim
_{j\to\infty}\bigl\{ \mathsf{Lbl}(G_j)|
_r(B_1), \dots, \mathsf {Lbl}(G_j)|
_r(B_n) \bigr\}.
\]
To see that the right-hand side is well-defined (i.e., independent of the
choice of $r$) notice that for $r<r'$, $(\mathsf{Lbl}(G_j)| _{r'})([0,r)^2
\cap \cdot) \equaldist\mathsf{Lbl}(G_j)| _r$. The right-hand side converges
in distribution because $\mathsf{Lbl}(G_j)| _r$ converges in distribution.
Moreover, the consistency conditions necessary for the right-hand side to be
counts with respect to some point process are satisfied, because the limiting
joint distributions are counts with respect to $\lim_{j\to\infty}
\mathsf{Lbl}(G_j)| _r$. By the Kolmogorov existence theorem for point
processes (see \cite{Daley:Vere-Jones:2003:v2}, Theorem~9.2.X), this suffices
to show that $\xi$ exists and has a well-defined distribution.

It is immediate that $\mathsf{Lbl}(G_j) \convDist\xi $ as $j \to\infty$
because, by construction,
\[
\bigl\{ \mathsf{Lbl}(G_j) (B_1), \dots, \mathsf
{Lbl}(G_j) (B_n) \bigr\} \convDist\bigl\{ \xi
(B_1), \dots, \xi(B_n) \bigr\} \qquad \text{as } j \to
\infty,
\]
for all bounded Borel sets $B_1, \dots, B_n \subseteq\Reals_{+}^2$.

The reverse direction follows similarly.
\end{proof}

The next result establishes that graphexes are the natural limit objects of
sampling convergent sequences.

\begin{lem}\label{lim_is_graphex}
Let $G_1,G_2, \dots$ be a sampling convergent graph sequence with
$\edges(G_j) \to\infty$ as $j\to\infty$. Then the limit is a graphex, in the
sense that there is some (possibly random) $\W$ such that if $\mathrm {SmplD}
(G_j,r ) \to Q_r$ then $Q_r \given\W= \mathrm{GPD}  (\W,r ) $.
\end{lem}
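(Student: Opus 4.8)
The plan is to reduce the statement to the Kallenberg representation theorem (\cref{thm:graphex_rep_theorem}) via the equivalence established in \cref{cannon_emb_convs}. By that lemma, sampling convergence of $G_1, G_2, \dots$ is equivalent to weak convergence of the embeddings $\RLabelD{G_j}$ to some limiting distribution $\labelLim$ over point processes on $\NNReals^2$; let $\xi \dist \labelLim$. The first key step is to check that $\xi$ is a random adjacency measure, i.e., that it is almost surely a purely atomic, symmetric, locally finite counting measure with unit-weight atoms. Local finiteness and integer-valuedness pass to the limit because each count $\xi(B)$ is a distributional limit of the integer-valued $\RLabel{G_j}(B)$, and one bounds $\EE[\xi(B)]$ uniformly using that the canonical labeling was chosen precisely so that the expected number of edges landing in a bounded box is controlled independently of $j$ (this is the ``virtue'' noted right after \cref{def:canon-label}). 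Symmetry is inherited since each $\RLabel{G_j}$ is symmetric. The atoms-of-weight-one property requires a small argument: one shows $\Pr(\xi(\{x\}\times\{y\})\ge 2)$ for a fixed pair vanishes, or more cleanly, uses the Portmanteau-type characterization of point-process convergence together with the fact that two distinct vertices of $G_j$ almost surely receive distinct labels, so coincidences in $\RLabel{G_j}$ occur only from the two orientations of a single non-loop edge, which is already built into the adjacency-measure formalism.

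The second step is to establish that $\xi$ is jointly exchangeable. Fix a measure-preserving transformation $\phi\colon \NNReals\to\NNReals$. Because the vertices of each $G_j$ are labeled i.i.d.\ uniformly on $[0,\sqrt{2\edges(G_j)})$, the finite-dimensional distributions of $\RLabel{G_j}$ are invariant under relabeling by any measure-preserving map of $[0,\sqrt{2\edges(G_j)})$; more carefully, for any bounded Borel $B_1,\dots,B_n$ and $j$ large enough that all $\phi^{-1}(B_i)$ lie in the labeling range, one has $(\RLabel{G_j}(B_1),\dots) \equaldist (\RLabel{G_j}(\phi^{-1}(B_1)),\dots)$ by the i.i.d.\ uniform structure of the labels. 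Passing to the limit in $j$ gives $(\xi(B_1),\dots,\xi(B_n)) \equaldist (\xi(\phi^{-1}(B_1)),\dots,\xi(\phi^{-1}(B_n)))$, i.e., $\xi\circ(\phi\otimes\phi)\equaldist\xi$. Since this holds for all test sets, $\xi$ is jointly exchangeable. (If the graph sequence is random, one carries out this argument conditionally on the sequence.)

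The third step is to invoke \cref{thm:graphex_rep_theorem}: a jointly exchangeable random adjacency measure $\xi$ is of the form \eqref{KEGgen} for some (possibly random) graphex $\W$, and $\W$ may be taken non-random exactly when $\xi$ is extremal. This produces the graphex whose existence is asserted. Finally, one must identify the sampling limit $Q_r = \sampLim{r}$ with $\GPD{\W}{r}$ conditional on $\W$. By \cref{cannon_emb_convs}, if $H_r\dist\sampLim{r}$ then $\RLabel[r]{H_r}\equaldist \xi([0,r)^2\cap\,\cdot\,)$; on the other side, by \cref{def:graphex-process} the restriction of the graphex-process adjacency measure to $[0,r]^2$ is exactly $\KEG_r$, so $\RLabel[r]{\CAM{\KEG_r}}\equaldist \xi([0,r)^2\cap\,\cdot\,)$ as well (here one uses that the birth-times of a unit-rate Poisson process, conditioned to lie in $[0,r)$ and on their count, are i.i.d.\ uniform, so re-labeling $\CAM{\KEG_r}$ uniformly into $[0,r)$ reproduces the original labels in distribution). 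Hence $\RLabel[r]{H_r}\equaldist\RLabel[r]{\CAM{\KEG_r}}$, and by the injectivity-on-distributions of random labeling (\citep[][Lemma~4.13]{Veitch:Roy:2016}, already used in the proof of \cref{cannon_emb_convs}) this forces $H_r\equaldist\CAM{\KEG_r}$, i.e., $Q_r\given\W=\GPD{\W}{r}$.

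The main obstacle is the passage in step one from ``each count converges'' to ``$\xi$ is genuinely a \emph{finite} graph's adjacency measure on every bounded window'' — that is, ruling out that mass escapes to infinity or that the limit develops infinitely many atoms in a bounded box. This is where integrability enters, but at this stage we only need \emph{local} finiteness of $\xi$, which follows from the uniform $L^1$ bound on $\RLabel{G_j}$ restricted to bounded windows; the stronger integrability of the resulting graphex $\W$ (and the a.s.\ finiteness of $\CAM{\KEG_r}$) is a separate matter taken up in \cref{lim_is_det_int_graphex}. A secondary technical point is handling the random-sequence case uniformly: one wants a single $\W$ measurable with respect to the sequence, which is obtained by a standard measurable-selection argument applied to the Kallenberg representation, mirroring the ``random graphex'' conventions in the remark following \cref{thm:graphex_rep_theorem}.
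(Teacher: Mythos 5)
Your proposal follows the same route as the paper: \cref{cannon_emb_convs} converts sampling convergence into distributional convergence of the canonical labelings to a limiting point process $\xi$, exchangeability of $\xi$ is extracted from the i.i.d.\ uniform structure of the labels, the Kallenberg representation theorem (\cref{thm:graphex_rep_theorem}) then produces the (possibly random) graphex $\W$, and \cref{cannon_emb_convs} is used again to identify $Q_r \given \W$ with $\GPD{\W}{r}$. The reduction to \cref{thm:graphex_rep_theorem}, the final identification step, and your observation that only local finiteness of $\xi$ (not integrability of $\W$) is needed here all match the paper's treatment.

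The one step that does not work as written is the exchangeability argument. For a general measure-preserving transformation $\phi\colon\NNReals\to\NNReals$, the set $\phi^{-1}(B)$ for bounded Borel $B$ need not be bounded (consider $\phi$ mapping a measure-one set scattered over all of $\NNReals$ onto $[0,1)$), so there is no $j$ for which ``all $\phi^{-1}(B_i)$ lie in the labeling range,'' and the convergence supplied by \cref{cannon_emb_convs} --- convergence of counts on \emph{bounded} test sets --- does not let you pass to the limit. Moreover, even the finite-$j$ identity $(\RLabel{G_j}(B_1),\dots)\equaldist(\RLabel{G_j}(\phi^{-1}(B_1)),\dots)$ requires $\phi$ to map the labeling range $[0,\sqrt{2\edges(G_j)})$ measure-preservingly to itself, since otherwise $\phi(U_i)$ is no longer uniform on that range. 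Your argument is therefore valid only for transformations that act inside some bounded interval $[0,r)$ and are the identity elsewhere. That is in fact all one needs, but it requires an explicit upgrade: the paper verifies $\xi\circ(\phi\otimes\phi)\equaldist\xi$ for measure-preserving transformations of $[0,r)$, specializes to transpositions of a dyadic partition of $\NNReals$, and then invokes \citep[][Proposition~9.1]{Kallenberg:2005} to pass from invariance under dyadic transpositions to full joint exchangeability. Without this reduction (or an equivalent approximation argument extending invariance from boundedly supported transformations to all measure-preserving transformations), the hypothesis of \cref{thm:graphex_rep_theorem} has not been verified.
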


\begin{proof}
Notice that $\loops(G_j) = O(\sqrt{\edges(G_j)})$ for any sampling convergent
sequence, since otherwise the number of vertices in the random subgraph
diverges. By Lemma~\ref{cannon_emb_convs}, the canonical random labelings of
$G_j$ are convergent to some point process $\xi$ on $\Reals_{+}^2$. Observe
that for any $r$ and any measure-preserving transformation $\phi$ on $[0,r)$,
$\xi \circ(\phi\otimes\phi) \equaldist\xi$. In particular then, for any
dyadic partitioning of $\Reals_{+}$ and any transposition $\tau$ of this
dyadic partitioning, $\xi\circ(\tau\otimes\tau) \equaldist\xi$, and by
\cite{Kallenberg:2005}, Proposition~9.1, this implies that $\xi$ is
exchangeable. Then by the Kallenberg representation theorem there is some
(possibly random) graphex $\W$ that generates~$\xi$. That is,
$\mathrm{embed}(G_{j})$ converges weakly to the distribution over point
processes defined by (marginalizing over) $\W$. Lemma~\ref{cannon_emb_convs}
then establishes the result.
\end{proof}

We now turn to establishing that the limiting graphex $\W$ in
Lemma~\ref{lim_is_graphex} is nonrandom and integrable.

The next lemma gives a tractable criterion for determining when an
exchangeable point process is ergodic, that is, when $\W$ is nonrandom.
Basically, an adjacency measure is ergodic if for all $r,r'\in\Reals_{+} $
with $r<r'$, the induced subgraph with vertex labels less than $r$ gives no
information about the induced subgraph with vertex labels between $r$ and
$r'$. This lemma is an analogue of \cite{Kallenberg:2005}, Lemma~7.35,
attributed there to David Aldous.

\begin{lem}\label{ergodic_cond}
Let $\KEG$ be an exchangeable adjacency measure on $\Reals_{+}^2$. Then
$\KEG$ is extremal if and only if for all $r < r'\in\Reals_{+}$, $\KEG
([0,r)^2 \cap \cdot)$ and $\KEG([r,r')^2 \cap\cdot)$ are independent.
\end{lem}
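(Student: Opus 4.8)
The plan is to prove both directions. For the ``only if'' direction, suppose $\KEG$ is extremal and fix $r<r'$. The two point processes $\KEG([0,r)^2\cap\cdot)$ and $\KEG([r,r')^2\cap\cdot)$ are measurable functions of $\KEG$, so it suffices to show that the $\sigma$-algebra they generate is trivial on an extremal exchangeable measure, or more directly to exhibit independence. I would use the fact that extremality is equivalent to ergodicity with respect to the natural group of measure-preserving transformations, and realize the independence via a shift/dilation argument: let $\phi_t$ be the measure-preserving transformation of $\NNReals$ that cyclically shifts the blocks of a fine dyadic partition so that the block $[0,r)$ is carried far out along the axis (here one uses that $r,r'$ can be taken dyadic by a density/monotone-class argument, and general $r,r'$ follow by approximation since $\Pr(\KEG(\partial(\cdot))=0)$ holds for a.e.\ threshold). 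Because $\KEG\circ(\phi_t\otimes\phi_t)\eqD\KEG$, the restriction to $[0,r)^2$ is distributionally a restriction to a far-away block, which by a mixing/ergodic-theorem argument (averaging over shifts and using that the invariant $\sigma$-algebra is trivial) becomes asymptotically independent of the fixed restriction to $[r,r')^2$; since the joint law does not actually change under the shift, asymptotic independence upgrades to exact independence.

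For the ``if'' direction, suppose that for all $r<r'$ the restrictions to $[0,r)^2$ and $[r,r')^2$ are independent. By iterating, the restrictions of $\KEG$ to the squares $[0,r)^2,[r,2r)^2,[2r,3r)^2,\dots$ over any arithmetic progression of thresholds are mutually independent, and by exchangeability they are identically distributed. Extremality will be deduced from a zero–one law: the tail $\sigma$-algebra $\bigcap_n \sigma(\KEG([n r,\infty)^2\cap\cdot))$ is trivial by the Kolmogorov zero–one law applied to this i.i.d.\ family. One then shows that the exchangeable ($=$ invariant) $\sigma$-algebra is contained in (a suitable completion of) this tail $\sigma$-algebra — intuitively, any event invariant under all measure-preserving transformations of $\NNReals$ is in particular invariant under translations that push mass arbitrarily far out, hence is a tail event — and therefore is trivial. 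Triviality of the invariant $\sigma$-algebra is exactly ergodicity, which by the Kallenberg representation theorem (\cref{thm:graphex_rep_theorem}) is equivalent to the generating graphex being non-random, i.e.\ to $\KEG$ being extremal. This is precisely the structure of the Aldous argument behind \citep[Lemma~7.35]{Kallenberg:2005}, and I would follow that template, citing it for the parts that transfer verbatim.

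The main obstacle I anticipate is the careful handling of the off-diagonal ``cross'' blocks: the decomposition of $\KEG$ restricted to $[0,r')^2$ is not just the two diagonal squares $[0,r)^2$ and $[r,r')^2$ but also includes the rectangles $[0,r)\times[r,r')$ and its reflection, which carry the edges between the two vertex groups. The hypothesis as stated only asserts independence of the two \emph{diagonal} restrictions, so to run the zero–one law I need to argue that the cross-block contributions are themselves controlled — either by noting that an edge between a label in $[0,r)$ and a label in $[r,r')$ forces both endpoints to be counted and can be absorbed into a slightly larger diagonal square via another measure-preserving rearrangement, or by strengthening the intermediate claim to independence of $\KEG([0,r)^2\cap\cdot)$ from $\KEG(([r,\infty)^2)\cap\cdot)$ and feeding that into the tail argument. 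Making this bookkeeping clean, while keeping the dyadic-approximation step honest about null sets of bad thresholds, is where the real work lies; the ergodic/zero–one-law machinery itself is standard.
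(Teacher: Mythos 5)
Your overall strategy differs from the paper's on both directions, and the ``if'' direction as written has a real gap. For the ``only if'' direction the paper needs no mixing or shift-averaging argument: extremality means, by the Kallenberg representation theorem (\cref{thm:graphex_rep_theorem}), that $\KEG$ is generated by a \emph{non-random} graphex via the explicit construction \cref{KEGgen}, and in that construction the restriction to $[0,r)^2$ and the restriction to $[r,r')^2$ are built from disjoint portions of the latent Poisson processes and disjoint uniform variables, so independence is immediate. Your ergodic-theoretic route (carry the block $[0,r)$ far out, obtain asymptotic independence, upgrade to exact independence by distributional invariance) can be made to work --- it is essentially the Aldous/Kallenberg argument behind \citep[][Lemma~7.35]{Kallenberg:2005} --- but the ``averaging over shifts'' step is exactly where the work lies and you have not supplied it; with the representation theorem already in hand, this is a harder road to the same destination.

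The ``if'' direction is where the genuine gap lies. First, ``by iterating, the restrictions over an arithmetic progression of thresholds are mutually independent'' does not follow: the hypothesis gives only pairwise independence of two diagonal blocks at a time, and Kolmogorov's zero--one law requires mutual independence of the family. Second, and more importantly, the decisive ingredient is the one you gesture at but never supply: you need to know that the directing graphex $\W$ (equivalently, the invariant $\sigma$-algebra) is measurable with respect to $\sigma(\KEG([r,r')^2\cap\cdot))$ in the limit $r'\to\infty$. That is not a soft consequence of translation invariance; it is the consistent-estimation theorem \citep[][Theorem~4.8]{Veitch:Roy:2016}, which the paper invokes directly: the conditional law of $\KEG([0,r)^2\cap\cdot)$ given $\KEG([r,s_j)^2\cap\cdot)$ converges to its conditional law given $\W$, so if $\KEG$ is not extremal there must be some finite $r'$ for which $\KEG([0,r)^2\cap\cdot)$ and $\KEG([r,r')^2\cap\cdot)$ fail to be independent. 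This contrapositive argument needs neither mutual independence nor any handling of the cross rectangles $[0,r)\times[r,r')$ --- the bookkeeping you rightly worry about simply never arises. Without the estimation theorem (or an equivalent tail-measurability statement for $\W$), your zero--one-law scaffolding does not close.
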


\begin{proof}
If the point process is extremal, the Kallenberg representation theorem
(Theorem~\ref{theorem:graphex_rep_theorem}) immediately implies the result.

To prove the converse direction, we use the following notation from
\cite{Veitch:Roy:2016}. Let $\KEG$ be generated by $\W$ and let $\KEGDinf{\W}
= \Pr(\KEG\in\cdot\given\W)$, the (possibly random) probability measure over
adjacency measures induced by (the possibly random)~$\W$.

Suppose that $\KEG$ is not extremal. By a consistent estimation result
\cite{Veitch:Roy:2016}, Theorem~4.8, for any sequence $s_1, s_2, \dots$ such
that $s_j \upto\infty$,
\[
\lim_{j\to\infty}\Pr\bigl(\KEG\bigl([0,r)^2 \cap
\cdot) \in\cdot\given \KEG\bigl([r,s_j\bigr)^2 \cap
\cdot\bigr)\bigr) = \Pr\bigl(\KEG\bigl([0,r)^2 \cap\cdot\bigr) \in\cdot
\given \KEGDinf{\W}\bigr),
\]
(i.e., $\W$ can be estimated from an infinite size sample). Since, by
non-extremity, $\Pr(\KEG([0,r)^2 \cap\cdot) \in\cdot\given \KEGDinf{\W})
\neq\Pr(\KEG([0,r)^2 \cap\cdot) \in\cdot)$, this means that there is some $r'
\in\Reals_{+}$ such that
\[
\Pr\bigl(\KEG\bigl([0,r)^2 \cap\cdot) \in\cdot\given\KEG
\bigl([r,r'\bigr)^2 \cap \cdot\bigr)\bigr) \neq \Pr\bigl(\KEG
\bigl([0,r\bigr)^2 \cap\cdot) \in\cdot\bigr),
\]
as required.
\end{proof}

\begin{lem}\label{lim_is_deter}
The limiting graphex $\W$ in Lemma~\ref{lim_is_graphex} is nonrandom.
\end{lem}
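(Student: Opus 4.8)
The plan is to apply the ergodicity criterion of \cref{ergodic_cond} to the limiting point process $\xi$ obtained from the canonical labelings $\RLabel{G_j}$, and to deduce the required independence directly from the fact that each $G_j$ is a \emph{deterministic} graph. Recall from \cref{cannon_emb_convs} and the proof of \cref{lim_is_graphex} that $\xi$ is an exchangeable adjacency measure generated by the (a priori random) graphex $\W$, and that $\RLabel{G_j} \convDist \xi$. By \cref{ergodic_cond}, it suffices to show that for all $r < r' \in \NNReals$ the restrictions $\xi([0,r)^2 \cap \cdot)$ and $\xi([r,r')^2 \cap \cdot)$ are independent.

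First I would establish an approximate independence at finite $j$. Fix $r<r'$ and take $j$ large enough that $\sqrt{2\edges(G_j)} > r'$. Under the canonical labeling, each vertex of $G_j$ independently receives a label in $[0,r)$, in $[r,r')$, or elsewhere, with the respective probabilities $r/\sqrt{2\edges(G_j)}$, $(r'-r)/\sqrt{2\edges(G_j)}$, and the remainder. Conditionally on these independent assignments, the labels within $[0,r)$ and within $[r,r')$ are drawn independently and uniformly. Hence $\RLabel{G_j}([0,r)^2 \cap \cdot)$ is a measurable function of the set $A_j$ of vertices landing in $[0,r)$ together with their positions, while $\RLabel{G_j}([r,r')^2 \cap \cdot)$ depends only on the disjoint set $B_j$ of vertices landing in $[r,r')$ and their positions. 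The only coupling between the two restrictions comes from the requirement $A_j \cap B_j = \emptyset$, i.e.\ from the fact that each vertex can land in at most one of the two windows; absent this ``collision'' constraint, the two objects would be exactly independent. The key quantitative point is that a vertex $v$ landing in $[0,r)$ contributes to $\RLabel{G_j}([0,r)^2 \cap \cdot)$ only if one of its neighbors also lands in $[0,r)$, an event of probability $O(\deg(v)/\sqrt{\edges(G_j)})$, and similarly for $[r,r')$; summing over vertices, the number of ``live'' vertices in each window is $O(r^2)$ (respectively $O((r'-r)^2)$) in expectation, with the relevant configurations concentrated on finitely many vertices. One then shows that the probability that any vertex which is live in both windows' sense actually realizes a collision — or more precisely, that the events driving the two restrictions are affected by the disjointness constraint — tends to $0$; the cleanest route is a coupling that draws the two families of window-labels as if they were independent (allowing a vertex to appear in both), and bounds the probability of a discrepancy by $O(1/\sqrt{\edges(G_j)}) \to 0$ using that the number of edges incident to live vertices is $O(r'^2)$ and $\vertices(G_j) \geq \sqrt{2\edges(G_j)} \to \infty$, exactly as in the collision bounds of \cref{lem:coupling}.

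Passing to the limit $j \to \infty$: since $\xi([0,r)^2\cap\cdot)$ and $\xi([r,r')^2\cap\cdot)$ are bounded point processes, convergence in distribution of $\RLabel{G_j}$ gives joint convergence of the pair, and the asymptotic independence just established forces the limiting pair to be independent (test the joint characteristic functional against bounded continuous functionals supported in $[0,r)^2$ and $[r,r')^2$ respectively, and note the cross term factorizes in the limit). Therefore $\xi$ satisfies the hypothesis of \cref{ergodic_cond} and is extremal, so by \cref{thm:graphex_rep_theorem} the generating graphex $\W$ can be taken non-random, which is the claim.

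I expect the main obstacle to be the bookkeeping in the finite-$j$ collision estimate: one must be careful that the quantity controlling the restriction on window $[0,r)$ is not merely ``vertices landing in $[0,r)$'' (whose count diverges) but ``vertices landing in $[0,r)$ that are incident to another such vertex'' (whose count is $O(r^2)$), and dually for loops, and then argue that the disjointness constraint perturbs only an event of vanishing probability. Once this is set up the rest is a routine application of the point-process convergence machinery already invoked in \cref{cannon_emb_convs} together with \cref{ergodic_cond}.
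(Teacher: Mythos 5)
Your proof is correct, but it takes a genuinely different route from the paper's. Both arguments funnel through \cref{ergodic_cond}: show that the limiting point process $\xi$ has independent restrictions to $[0,r)^2$ and $[r,r')^2$, conclude extremality, and invoke \cref{thm:graphex_rep_theorem}. The difference is in how the independence is obtained. The paper sidesteps the collision issue entirely by switching to \emph{with-replacement} sampling via \cref{samp_equiv_poi_wr}: it builds $\xi_{j,r'}$ by a with-replacement $r'/\sqrt{2\edges_j}$-sampling of $G_j$ labeled uniformly in $[0,r')$, and observes that Poisson thinning plus the absence of any disjointness constraint between the two sampled multisets makes the two window restrictions \emph{exactly} independent at every finite $j$, so independence passes trivially to the limit. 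You instead work directly with the canonical (without-replacement) labeling, identify the disjointness constraint as the sole source of dependence, and bound its effect. Your key estimate is right: coupling the true bin assignments $(X_v,Y_v)$ with independent Bernoullis $(X'_v,Y'_v)$, a vertex $v$ is miscoupled with probability $O(pq)$ where $p=r/\sqrt{2\edges_j}$ and $q=(r'-r)/\sqrt{2\edges_j}$, and a miscoupled vertex perturbs one of the two restricted point processes only if it is live there, i.e., has a neighbor in that window (probability $O(\deg(v)(p+q))$) or carries a loop; summing gives $O\bigl(pq(p+q)\sum_v\deg(v)\bigr)+O(pq\,\loops(G_j))=O(1/\sqrt{\edges_j})$, using $\sum_v\deg(v)=2\edges_j$ and $\loops(G_j)=O(\sqrt{\edges_j})$. (Note the correct controlling quantity in the first term is $\sum_v\deg(v)=2\edges_j$ multiplied by $pq(p+q)$, rather than the number of edges incident to live vertices; your phrasing there is slightly off but the estimate you need is exactly this one.) Your final step, passing asymptotic independence to the limit by testing against bounded continuous functionals of the two restrictions, is standard and valid since $\xi$ a.s.\ puts no mass on the window boundaries. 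What each approach buys: the paper's is shorter given that the with-replacement machinery (\cref{lem:coupling}, \cref{samp_equiv_poi_wr}) is already developed and needed elsewhere, and it yields exact rather than asymptotic independence at finite $j$; yours is more self-contained, avoids the detour through with-replacement sampling, and makes transparent that the only obstruction to independence is an $O(1/\sqrt{\edges_j})$ collision event, at the cost of the bookkeeping you anticipate and one extra limiting argument.
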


\begin{proof}
As in the proof of Lemma~\ref{lim_is_graphex}, $\loops(G_j) =
O(\sqrt{\edges(G_j)})$ for any sampling convergent sequence. We then make use
of Lemma~\ref{samp_equiv_poi_wr}, the asymptotic equivalence of
$\mathsf{Smpl} (G_j,{r}/{\sqrt{2\edges(G_j)}})$ and $\mathsf{SmplWR}
(G_j,{r}/{\sqrt{2\edges(G_j)}})$. Let $r' \in\Reals_{+}$ and produce a
sequence of adjacency measures $\xi_{j,r'}$ by, for each $j \in \Nats$,
sampling a subgraph from $G_j$ according to the with replacement scheme (with
probability $r'/\sqrt{2\edges_j}$) and then randomly labeling this subgraph
in $[0,r')$. By the asymptotic equivalence of the sampling schemes and
Lemma~\ref{cannon_emb_convs}, $\xi_{j,r'} \convDist\xi([0,r')^2 \cap \cdot)$,
where $\xi$ is an adjacency measure generated by $\W$.

As a consequence of the with replacement sampling scheme, for all
$j\in\Nats$, $\xi_{j,r'}([0,r)^2 \cap\cdot)$ is independent of
$\xi_{j,r'}([r,r')^2 \cap\cdot)$, for any $r < r'$. To see this, note first
that each sampled vertex has a label in $[0,r)$ independently with
probability $r/r'$, so that, by a property of the Poisson distribution, the
number of vertices in $[0,r)$ and in $[r,r')$ have independent $\poiDist (r
\vertices(G_j)/\sqrt{2\edges(G_j)} ) $ and $\poiDist ((r'- r)
\vertices(G_j)/\sqrt{2\edges(G_j)} ) $ distributions. Second, because the
vertex sampling is with replacement, the structure of the graph with labels
in $[0,r)$ contains no information about the structure of the graph with
labels in $[r,r')$.

The independence of $\xi_{j,r'}([0,r)^2 \cap\cdot)$ and $\xi_{j,r'}([r,r')^2
\cap\cdot)$ for all $j \in\Nats$ implies that $\xi([0,r)^2 \cap \cdot)$ is
independent of $\xi([r,r')^2 \cap\cdot)$. Because $r,r'$ were arbitrary,
Lemma~\ref{ergodic_cond} implies that $\xi$ is ergodic, or, equivalently,
that $\W$ is nonrandom.
\end{proof}

Next, we show that the limiting $\W$ is integrable, we bound the integral and
we give a condition for when the bound is saturated. We will need the
following lemma.

\begin{lem}\label{expected_edges}
Let $(\KEG_s)_{s\in\Reals_{+}}$ be generated by $\W= (I, S, W)$. Then
\[
\EE\bigl[\edges(\KEG_s)\bigr] = \frac{s^2}2 \llVert \W
\rrVert _1 \quad \text{and}\quad \EE\bigl[\loops(\KEG_s)
\bigr]=s
\int W(x,x) \,\intd{x}.
\]
\end{lem}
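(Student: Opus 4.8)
The plan is to compute the two expectations directly from the explicit construction \eqref{KEGgen}, exploiting the fact that the three sums there are independent and that each is built from Poisson processes to which Campbell's formula (the Mecke/Slivnyak-type formula for expectations of sums over a Poisson process) applies. First I would restrict attention to $\KEG_s$, i.e.\ intersect everything with $[0,s]^2$; concretely, a non-loop edge contributes to $\edges(\KEG_s)$ precisely when both of its endpoints have birth {\timeword} in $[0,s]$, and similarly a loop contributes to $\loops(\KEG_s)$ when its single endpoint has birth {\timeword} in $[0,s]$. Because $\xi$ is a.s.\ simple, $\edges(\KEG_s)$ is exactly the number of unordered pairs $\{i,j\}$ (with $i\ne j$) realized as an edge, which is one half the number of ordered pairs, and $\loops(\KEG_s)$ counts the diagonal atoms.

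Next I would handle the three terms of \eqref{KEGgen} separately. For the $W$-term, $\edges$ picks up one half of $\EE\bigl[\sum_{i\ne j} 1[\zeta_{\{i,j\}}\le W(\vartheta_i,\vartheta_j)]\,1[\theta_i\le s]\,1[\theta_j\le s]\bigr]$; conditioning on the Poisson process $\{(\theta_j,\vartheta_j)\}$ and averaging over the independent uniforms $\zeta$ replaces the indicator by $W(\vartheta_i,\vartheta_j)$, and then the second-order Campbell formula for the unit-rate Poisson process on $\NNReals^2$ turns the double sum into $\int_{[0,s]}\!\int_{[0,s]}\int_{\NNReals}\int_{\NNReals} W(x,y)\intd \vartheta_x\intd\vartheta_y\intd\theta_x\intd\theta_y = s^2\int_{\NNReals^2}W(x,y)\intd x\intd y = s^2\|W\|_1$; dividing by $2$ gives $\tfrac{s^2}{2}\|W\|_1$. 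For the $S$-term, each vertex $j$ with $\theta_j\le s$ spawns a Poisson($\StarF(\vartheta_j)$) number of distinct star-neighbors (the points $\sigma_{jk}$ are a.s.\ distinct from each other and from the $\theta$'s), so by first-order Campbell over $\{(\theta_j,\vartheta_j)\}$ the expected number of such edges is $\int_{[0,s]}\int_{\NNReals}\StarF(\vartheta)\intd\vartheta\intd\theta = s\|S\|_1$; these are all non-loop edges, contributing $s\|S\|_1 = \tfrac{s^2}{2}\cdot\tfrac{2}{s}\|S\|_1$, and after folding in the normalization $\|\W\|_1 = \|W\|_1 + 2\|S\|_1 + 2I$ one sees the $S$-term contributes $\tfrac{s^2}{2}\cdot\tfrac{2}{s}\|S\|_1$—wait, this must be reconciled with the stated constant, so I would carefully track that $\EE[\edges(\KEG_s)]$ from this term equals $s\|S\|_1$ and check it matches the claimed $\tfrac{s^2}{2}\|\W\|_1$ only after also adding the third term. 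For the $I$-term, the points $(\rho_k,\rho_k')$ with $\eta_k\le I$ and $\rho_k,\rho_k'\le s$ form a Poisson process of rate $I$ on $[0,s]^2$, contributing in expectation $I s^2$ ordered pairs, hence $\tfrac{I s^2}{2}$ edges. Summing, $\EE[\edges(\KEG_s)] = \tfrac{s^2}{2}\|W\|_1 + s\|S\|_1 + \tfrac{s^2}{2}I$; I would then reconcile the middle term with $\tfrac{s^2}{2}\cdot 2\|S\|_1 = s^2\|S\|_1$ by noting the convention must give $\EE[\edges(\KEG_s)] = \tfrac{s^2}{2}\|\W\|_1 = \tfrac{s^2}{2}(\|W\|_1 + 2\|S\|_1 + 2I)$—so the star-term actually must contribute $s^2\|S\|_1$, meaning each vertex has birth {\timeword} integrated and the $\sigma_{jk}$ endpoints are unconstrained, so one should not restrict $\sigma_{jk}\le s$; re-examining the definition $\xi_s(\cdot) = \xi(\cdot\cap[0,s]^2)$, a star edge $(\theta_j,\sigma_{jk})$ survives iff \emph{both} $\theta_j\le s$ and $\sigma_{jk}\le s$, which gives $s\|S\|_1$, not $s^2\|S\|_1$—the discrepancy shows the normalization $\|\W\|_1 = \|W\|_1 + \tfrac12\|S\|_1 + \tfrac12 I$ stated in the introduction (rather than the one in the Remark) is the relevant one here, and $\edges$ must also be re-examined; this bookkeeping is exactly the main obstacle.

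For the loop count, only the diagonal atoms matter. The $W$-term contributes diagonal atoms $\delta_{\theta_i,\theta_i}$ exactly when $i=j$ and $\zeta_{\{i,i\}}\le W(\vartheta_i,\vartheta_i)$, which by first-order Campbell over $\{(\theta_i,\vartheta_i)\}$ restricted to $\theta_i\le s$ gives $\int_{[0,s]}\int_{\NNReals} W(\vartheta,\vartheta)\intd\vartheta\intd\theta = s\int_{\NNReals}W(x,x)\intd x$. The $S$-term and $I$-term produce no loops (the $\sigma_{jk}$ are a.s.\ distinct from $\theta_j$, and the $\rho_k,\rho_k'$ are a.s.\ distinct), so $\EE[\loops(\KEG_s)] = s\int W(x,x)\intd x$ as claimed.

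The step I expect to be the genuine obstacle is pinning down the exact proportionality constant in $\|\W\|_1$ and making the three edge-contributions add up to $\tfrac{s^2}{2}\|\W\|_1$ \emph{consistently with the paper's own conventions}: the introduction writes $\|\W\|_1 = \|W\|_1 + \tfrac12\|S\|_1 + \tfrac12 I$ while the Remark after Definition~\ref{def:graphex} writes $\|\W\|_1 = \|W\|_1 + 2\|S\|_1 + 2I$, and the star/isolated terms of \eqref{KEGgen} carry symmetrization factors $(\delta_{\theta_j,\sigma_{jk}} + \delta_{\sigma_{jk},\theta_j})$ that affect whether one is counting ordered or unordered edges. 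Once the convention is fixed, everything else is a routine application of Campbell's formula together with the a.s.\ simplicity of $\xi$ (footnoted in Definition~\ref{def:graphex-process}), and independence of the three summands; I would present the argument term-by-term as above, flagging the normalization reconciliation explicitly rather than hiding it.
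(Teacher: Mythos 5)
Your overall strategy---splitting $\edges(\KEG_s)$ into the three contributions of \eqref{KEGgen} and applying Campbell's formula to each---is exactly the paper's, and your $W$-term and loop computations are correct. However, two of your three edge computations contain errors, and together they manufacture the apparent conflict with the normalization of $\|\W\|_1$ that you single out as the main obstacle. For the star term: a ray of the vertex with latent pair $(\theta_j,\vartheta_j)$ survives in $\KEG_s$ precisely when $\chi_{jk}\le \StarF(\vartheta_j)$ \emph{and} $\sigma_{jk}\le s$, and since $\{(\sigma_{jk},\chi_{jk})\}_k$ is a unit-rate Poisson process on $\NNReals^2$, the number of such $k$ is Poisson with mean equal to the area $s\cdot \StarF(\vartheta_j)$ of $[0,s]\times[0,\StarF(\vartheta_j)]$, not $\poiDist(\StarF(\vartheta_j))$ as you assert. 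Campbell's formula over $\{(\theta_j,\vartheta_j)\}$ restricted to $[0,s]\times\NNReals$ then yields $\int_0^s\int_{\NNReals} s\,\StarF(x)\intd x\intd t = s^2\|S\|_1$, with the correct power of $s$. For the $I$ term: each index $k$ with $\eta_k\le I$ and $\rho_k,\rho'_k\le s$ contributes exactly one unordered edge, because the summand $(\delta_{\rho_k,\rho'_k}+\delta_{\rho'_k,\rho_k})$ already supplies both orientations of that single edge; there is no halving to perform across distinct $k$, so the expected count is $Is^2$, not $Is^2/2$.

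With these corrections the three contributions are $\tfrac{s^2}{2}\|W\|_1$, $s^2\|S\|_1$, and $Is^2$, summing to $\tfrac{s^2}{2}\bigl(\|W\|_1+2\|S\|_1+2I\bigr)=\tfrac{s^2}{2}\|\W\|_1$ under the convention stated in the remark following Definition~\ref{def:graphex}, which is the operative one throughout the body of the paper (the differing formula in the introduction is simply inconsistent with it). The ``normalization reconciliation'' you flag as the genuine difficulty therefore dissolves once the two Poisson-rate bookkeeping errors above are fixed; no choice between competing conventions is required, and the rest of your argument (independence of the three summands, a.s.\ simplicity of $\xi$, and the observation that only the diagonal atoms of the $W$-term produce loops) goes through as written.
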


\begin{proof}
Let $e^I_s$, $e^S_s$ and $e^W_s$ be the number of nonloop edges generated by
the $I$, $S$ and $W$ components of the graphex, noting that the edge sets
generated by the different components are disjoint.

The equation $\EE[e^I_s] = I s^2$ is immediate from Campbell's theorem.

By \cite{Veitch:Roy:2015}, Theorem~5.3, $\EE[e^W_s] = s^2 \frac{1}{2}\|W\|_1$
and $\EE[\loops(\KEG_s)] = s\int W(x,x) \,\intd{x}$.

To treat the star component, let $\PP_s$ be the latent Poisson process,
restricted to $[0,s) \times\Reals_{+}$, used to generate $\KEG$ and for each
$(t_i,x_i) \in\PP_s$ let $M(x_i)$ be the number of rays that $(t_i,x_i)$ has
due to the star component of the graphex. By viewing $M(x_i)$ as a marking of
$\PP_s$, and recalling that $M(x_i)  \sim \poiDist(sS(x_i))$, we have from
Campbell's theorem that $\EE[e^S_s] = s^2 \|S\|_1$.
\end{proof}

By construction,
\[
\EE \bigl[\edges \bigl(\mathsf{Smpl} \bigl({G_j},r /\sqrt {2
\edges(G_j)}\bigr) \bigr) \bigr] = r^2/2
\]
for any simple graph $G_j$. However, it is not necessarily true that the
expected number of edges of the limiting graph is $r^2/2$. For example,
consider the case where $G_j$ is a star with $j$ rays. In this case, the
sampled subgraph is non-empty only if the center of the star is selected by
the vertex sampling. The probability that this happens goes to $0$ as $j \to
\infty$, so the limiting graph is the empty graph. The following property
characterizes when the limiting graphex $\W$ satisfies $\EE[e(H_r)] = r^2/2$
for $H_r  \sim \mathrm{GPD}  (\W,r ) $.

\begin{defn} \label{defn:USR}
A sequence of graphs $G_1, G_2, \dots$ is \defnphrase{uniformly sampling
regular} if for all $\epsilon> 0$ there is some $k > 0$ such that, uniformly
for all $j$,
\[
\frac{1}{\edges(G_j)} \sum_{i=1}^{\vertices(G_j)}d_{j,i}1
\bigl[ d_{j,i} > k \sqrt{\edges(G_j)} \bigr]\mathclose {} <
\epsilon,
\]
where $d_{j,i}$ is the degree of vertex $i$ in $G_j$ ignoring loops.
\end{defn}

Intuitively, this property is the requirement that, asymptotically, only a
vanishing fraction of the edges of the graph are due to vertices with
exceptionally high degree. This is a weakening of the condition of uniform
tail regularity: a sequence that is not uniformly sampling regular is also
not uniformly tail regular (see Remark~\ref{rem:tail-vs-sampling-reg} below),
but for example, graph sequences that consist of only isolated edges are
uniformly sampling regular but not uniformly tail regular.

\begin{remark}
\label{rem:tail-vs-sampling-reg} For a sequence of graphs, the sets $U_j$ in
Definition~\ref{def:tail-reg} can without loss of generality be assumed to
correspond to the high degree vertices in $G_j$. Formulated differently, a
sequence of graphs $G_1, G_2, \dots$ has uniformly regular tails iff for each
$\eps>0$ we can find an $M<\infty$ such that when vertices are ordered from
highest to lowest degrees, then
\[
\frac{1}{2\edges(G_j)} \sum_{i> M\sqrt{e(G_j)}}d_i(G_j)
\leq\eps,
\]
for all $j$, where $d_i(G_j)$ denotes the degree of vertex $i$ in $G_j$; see
\cite{Borgs:Chayes:Cohn:Holden:2016}, Remark~18. While this is a statement
about the negligible contribution of the low degree tail of the degree
distribution, it interestingly also implies that vertices of large degrees
only have a negligible contribution; that is, it implies that the sequence
$G_1,G_2,\dots$ is uniformly sampling regular.
\end{remark}

\begin{proof}
For $M<k$, the degree of a vertex of degree at least $k\sqrt{\edges(G)}$
clearly does not change by more than a factor of $(1-M/k)$ if we remove at
most $M\sqrt{\edges(G)}$ of its neighbors from the graph. As a consequence,
\[
\begin{aligned} &\sum_i{d_i(G)}1
\bigl[d_i>k\sqrt{\edges(G)}\bigr]
\\
&\qquad \leq\sum_i 1\bigl[d_i>k
\sqrt {\edges(G)}\bigr] \biggl(\frac{1}{1-M/k } \sum
_{\ell> M\sqrt
{e(G)}}1\bigl[(i,\ell) \in E(G)\bigr] \biggr)
\\
&\qquad \leq\frac{1}{1-M/k}\sum_{\ell> M\sqrt{e(G)}}d_\ell.
\end{aligned} %
\]
With the help of this bound, the proof is straightforward.
\end{proof}

\begin{lem}\label{lem:uniform-int}
Let $G_1,G_2,\dots$ be a graph sequence with $e_j=e(G_j)\to\infty$. Then
$\edges(\mathsf{Smpl} ({G_j},r/\sqrt{2\edges_j}))$ is uniformly integrable
for every $r$ if and only if $G_1, G_2, \dots$ is uniformly sampling regular.
\end{lem}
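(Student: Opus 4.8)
Write $p_j = r/\sqrt{2\edges(G_j)}$, abbreviate $e_j = \edges(G_j)$, and set $X_j = \edges(\samp[p_j]{G_j})$; since deleting isolated vertices does not change the edge set, $X_j$ is exactly the number of non-loop edges of $G_j$ with both endpoints in the sampled vertex set, and $\EE[X_j] = r^2/2$ for every $j$. The plan is to compare $X_j$ with the degree sequence of $G_j$, splitting the edge set of $G_j$ according to whether the endpoints have degree above or below $k\sqrt{e_j}$. Writing $H_j^k = \{i : d_{j,i} > k\sqrt{e_j}\}$ and recalling $\sum_i d_{j,i} = 2e_j$, uniform sampling regularity says precisely that $e_j^{-1}\sum_{i\in H_j^k} d_{j,i}$ is uniformly small once $k$ is large.

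For the forward direction (uniform sampling regularity $\Rightarrow$ uniform integrability), I would write $X_j = X_j^{\le k} + X_j^{>k}$, where $X_j^{>k}$ counts the sampled edges with at least one endpoint in $H_j^k$ and $X_j^{\le k}$ counts the rest. Each edge incident to $H_j^k$ is counted at most twice in $\sum_{i\in H_j^k} d_{j,i}$, so $\EE[X_j^{>k}] \le p_j^2\sum_{i\in H_j^k} d_{j,i} = \tfrac{r^2}{2}\,e_j^{-1}\sum_{i\in H_j^k} d_{j,i}$, which by uniform sampling regularity is uniformly $O(\eps)$ after choosing $k = k(\eps)$. For $X_j^{\le k}$ I would bound the second moment directly: expanding $\EE[(X_j^{\le k})^2]$ as a sum over ordered pairs of non-loop edges of $G_j$ with all endpoints of degree $\le k\sqrt{e_j}$, and separating pairs that share two, one, or zero vertices, gives contributions at most $e_j p_j^2$, at most $p_j^3\sum_{i:\,d_{j,i}\le k\sqrt{e_j}} d_{j,i}^2 \le 2k e_j^{3/2} p_j^3$ (this is where the degree cut is used), and at most $e_j^2 p_j^4$ --- that is, $r^2/2$, $kr^3/\sqrt2$, and $r^4/4$. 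Hence $\{X_j^{\le k}\}_j$ is bounded in $L^2$, so uniformly integrable, for each fixed $k$. To finish, note that on $\{X_j > T\}$ either $X_j^{\le k} > T/2$ or $X_j^{>k} > T/2$, so
\[
\EE[X_j\1[X_j>T]] \le \EE[X_j^{>k}] + \EE[X_j^{\le k}\1[X_j^{\le k}>T/2]] + \EE[X_j^{\le k}\1[X_j^{>k}>T/2]],
\]
where the last term is bounded via Cauchy--Schwarz and Markov using the uniform $L^2$ bound on $X_j^{\le k}$ and $\EE[X_j^{>k}] = O(\eps)$; letting $T \to \infty$ and then $\eps \to 0$ yields $\lim_{T\to\infty}\sup_j \EE[X_j\1[X_j>T]] = 0$.

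For the converse I would argue contrapositively. If the sequence is not uniformly sampling regular there is $\eps_0 > 0$ so that for every $k$ some index $j = j(k)$ has $\sum_{i\in H_j^k} d_{j,i} \ge \eps_0 e_j$. Fix a threshold $T$ and take $k \ge 2\sqrt2\,T/r$. For $v \in H_j^k$, conditionally on $v$ being sampled the number $N_v$ of sampled non-loop neighbours of $v$ has distribution $\binDist(d_{j,v}, p_j)$, whose mean $d_{j,v}p_j$ exceeds $kr/\sqrt2 \ge 2T$; hence $\EE[N_v\1[N_v>T] \mid v\text{ sampled}] \ge d_{j,v}p_j - T \ge \tfrac12 d_{j,v}p_j$. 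Since $\{v\text{ sampled},\,N_v>T\} \subseteq \{X_j > T\}$ and $2X_j = \sum_i N_i\1[i\text{ sampled}]$, summing over $v\in H_j^k$ gives
\[
2\,\EE[X_j\1[X_j>T]] \ge \sum_{v\in H_j^k}\EE[N_v\1[v\text{ sampled},\,N_v>T]] \ge \tfrac12 p_j^2\sum_{v\in H_j^k} d_{j,v} \ge \tfrac{\eps_0 r^2}{4},
\]
so $\EE[X_{j(k)}\1[X_{j(k)}>T]] \ge \eps_0 r^2/8$; as this holds for every $T$ with a suitable index $j(k)$, $\{X_j\}$ is not uniformly integrable.

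The hard part is the forward direction. A single vertex of very large degree, on the (rare) event that it is sampled, creates a number of edges far exceeding any fixed threshold, so $X_j$ itself need not be bounded in $L^2$ --- a sequence of stars with a diverging number of rays shows this --- and a plain second-moment argument for uniform integrability cannot work. The degree truncation $X_j = X_j^{\le k} + X_j^{>k}$ is what circumvents this, and the one genuinely delicate point is the cross term $\EE[X_j^{\le k}\1[X_j^{>k}>T/2]]$, where the truncated part is small yet $X_j$ is large because of the high-degree part; it is handled by combining the uniform $L^2$ bound on $X_j^{\le k}$ with a Markov bound on $X_j^{>k}$.
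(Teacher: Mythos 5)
Your proof is correct, and it reaches the same conclusion by a genuinely leaner route than the paper's, especially in the sufficiency direction. There, the paper splits the vertices three ways into high-, medium-, and low-degree classes and then decomposes the sampled edge count into five pieces (isolated edges, low--medium edges, and the separate contributions of $H_j$, $M_j$, and $L_j$), bounding each by its own first- or second-moment computation. Your two-part split --- sampled edges meeting $H_j^k$ versus those avoiding it --- collapses all of the non-high-degree bookkeeping into the single estimate
\[
\EE[(X_j^{\le k})^2]\le e_jp_j^2 + p_j^3\!\!\sum_{i:\,d_{j,i}\le k\sqrt{e_j}}\!\!d_{j,i}^2 + e_j^2p_j^4 \le \frac{r^2}{2} + \frac{kr^3}{\sqrt2} + \frac{r^4}{4},
\]
where the degree cap is exactly what controls the one-shared-vertex term via $\sum_i d_{j,i}^2\le k\sqrt{e_j}\cdot 2e_j$; and your treatment of the cross term $\EE[X_j^{\le k}\1[X_j^{>k}>T/2]]$ by Cauchy--Schwarz plus Markov is the right way to glue the pieces together. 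For necessity, both arguments reduce the uniform-sampling-regularity sum to $\sum_{i\in H_j}\EE[D^r_{j,i}]$ and compare it with a tail expectation of the sampled edge count; the paper does this with a multiplicative Chernoff bound showing that, conditionally on being sampled, a high-degree vertex has at least half its expected number of sampled neighbours with probability at least $1/2$, whereas your elementary inequality $\EE[N_v\1[N_v>T]\given v\text{ sampled}]\ge d_{j,v}p_j-T\ge\tfrac12 d_{j,v}p_j$ (valid because you fix the threshold $T$ first and only then take $k\ge 2\sqrt2\,T/r$) avoids concentration entirely. Nothing appears to be lost by your shortcuts --- the paper does not reuse its finer decomposition elsewhere --- and what your version buys is a proof that is shorter and, in the necessity direction, free of any concentration input.
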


\begin{proof}
Let $e^r_j \equaldist\edges(\mathsf{Smpl} ({G_j},r/\sqrt{2\edges_j}))$.
Uniform integrability is the statement that for each $\eps>0$ we can find an
$M<\infty$ such that
\[
\limsup_{j\to\infty}\EE\bigl[e^r_j1
\bigl[e^r_j > M\bigr]\bigr]\leq\eps.
\]
Let $d_{j,i}$ denote the degree of vertex $i$ in $G_j$ (ignoring loops, as
usual), and let $D^r_{j,i}$ be the degree in the sampled subgraph of vertex
$i$ in $G_j$, where $D^r_{j,i} = 0$ if vertex $i$ is not selected. Then
$e^r_j=\tfrac{1}{2}\sum_i D^r_{j,i}$. As we will see, the contributions to
this sum that determine whether $e_j^r$ is uniformly integrable come from the
high-degree vertices in $G_j$, specifically from the vertices in a set of the
form $H_j = \{i \in V(G_j) \st d_{j,i} > k\sqrt{e_j}\} $ for a suitable
$k>1$.

To show that uniform sampling regularity is necessary for uniform
integrability, we observe that $D^r_{j,i}$ is given by
\[
D^r_{j,i}= X_{j,i}^r
B^r_{j,i},
\]
where $X_{j,i}^r$ and $B^r_{j,i}$ are independent random variables with
\begin{align*}
X_{j,i}^r \sim \BernDist \biggl(\frac{r}{\sqrt{2e_j}}
\biggr) \quad \text{and}\quad B^r_{j,i} & \sim \binDist
\biggl(d_{j,i},\frac{r}{\sqrt
{2\edges_j}} \biggr).
\end{align*}
(Specifically, $B^r_{j,i}$ is the number of neighbors of vertex $i$ that are
sampled, and $X_{j,i}^r$ is the indicator function for whether $i$ is sampled
itself.) In particular, we can rewrite the sum from Definition~\ref{defn:USR}
as
\[
\frac{1}{\edges(G_j)} \sum_{i=1}^{\vertices(G_j)}d_{j,i}1
\bigl[ d_{j,i} > k \sqrt{\edges(G_j)} \bigr]\mathclose {} =
\frac{2}{r^2}\sum_{i\in H_j}\EE\bigl[D_{j,i}^r
\bigr].
\]
Assume for a moment that for $k$ large and $i\in H_j$,
%
\begin{equation}
\label{B-concentration} \EE\bigl[D_{j,i}^r\bigr]\leq4\EE
\bigl[D_{j,i}^r1\bigl[D^r_{j,i} >
kr/4\bigr] \bigr].
\end{equation}
This would allow us to bound our sum by
\begin{align*}
\frac{2}{r^2}\sum_{i\in H_j}\EE
\bigl[D_{j,i}^r\bigr]&\leq\frac{8}{r^2}\sum
_{i\in H_j}\EE \bigl[D_{j,i}^r1
\bigl[D^r_{j,i} > kr/4\bigr] \bigr]
\\
&\leq\frac{8}{r^2}\sum_{i\in V(G_j)}\EE
\bigl[D_{j,i}^r1\bigl[e^r_j > kr/4
\bigr] \bigr]
\\
&=\frac{8}{r^2}\EE \bigl[e_j^r1
\bigl[e^r_j > kr/4\bigr] \bigr].
\end{align*}
If we assume uniform integrability, the right-hand side can be made
arbitrarily small by choosing $k$ large enough, showing that uniform sampling
regularity is necessary for uniform integrability, once we establish the
bound \eqref{B-concentration}.

To prove \eqref{B-concentration}, we observe that $\Pr (B_{j,i}^r\leq
\frac{1}{2}\EE[B_{j,i}^r] )\leq\exp (-\tfrac{1}8\EE[B_{j,i}^r]
 )$ by the multiplicative Chernoff bound. For $i\in H_j$, we have that
$\EE[B_{j,i}^r]=\frac{rd_{j,i}}{\sqrt{2 e_j}}\geq\frac{kr}{\sqrt {2}}$, so
for $k\geq\frac{8}r$, we have that $\Pr (B_{j,i}^r\leq\frac{1}2\EE[B_{j,i}^r]
)\leq\exp(-1/\sqrt2)\leq\frac{1}2$. Combined with the fact that
$kr/4<\frac{1}2\EE[B_{j,i}^r]$ if $i\in H_j$, this allows us to bound
\begin{align*}
\EE \bigl[D^r_{j,i}1\bigl[D^r_{j,i}
> kr/4\bigr] \bigr] &\ge\EE \biggl[D^r_{j,i}1
\biggl[D^r_{j,i} > \frac{1}2\EE
\bigl[B_{j,i}^r\bigr] \biggr] \biggr]
\\
&=\frac{r}{\sqrt{2e_j}} \EE \biggl[D^r_{j,i}1
\biggl[D^r_{j,i} > \frac{1}2\EE
\bigl[B_{j,i}^r\bigr] \biggr] | X_{j,i}^r=1
\biggr]
\\
&= \frac{r}{\sqrt{2\edges_j}}\EE \biggl[B^r_{j,i}1
\biggl[B^r_{j,i} > \frac{1}2\EE
\bigl[B_{j,i}^r\bigr] \biggr] \biggr]
\\
& \ge\frac{r}{\sqrt{2\edges_j}}\frac{1}2\EE\bigl[B_{j,i}^r
\bigr]\Pr \biggl(B^r_{j,i} > \frac{1}2\EE
\bigl[B_{j,i}^r\bigr] \biggr)
\\
&\ge\frac{r}{\sqrt{2\edges_j}}\frac{1}4\EE\bigl[B_{j,i}^r
\bigr]=\frac{1}4 \EE \bigl[D_{j,i}^r\bigr] ,
\end{align*}
proving \eqref{B-concentration}, and hence the necessity of uniform sampling
regularity.\vadjust{\goodbreak}

To prove that uniform sampling regularity is sufficient to for uniform
integrability, we will need to control various other terms, but it turns out
that the contribution of the vertices in $H_j$ is the only one that requires
uniform sampling regularity. The details are tedious, and are given in the
rest of this proof.

Consider first the number of isolated edges, $e_j^{r,I}$, that is, the number
of edges $\{i,i'\}\in E(G_j)$ such that $D_{j,i}^r=D_{j,i'}^r=1$. The
probability that a given edge in $E(G_j)$ is an isolated edge is then bounded
by $r^2/e_j$, and the probability that two edges $b,b'\in E(G_j)$ are both
isolated is at most $r^4/e_j^2$, that is, the probability that all four
termini are selected by the sampling, except for the case that $b=b'$, in
which case we only have the upper bound $r^2/e_j$. As a consequence, the
expectation of $(e_j^{r,I})^2$ is bounded by $e_j(e_j-1)r^4/e_j^2 + e_j
r^2/e_j\leq r^4+r^2$. Thus $e_j^{r,I}$ is square integrable uniformly in $j$,
and hence uniformly integrable.

Next, given $k > 1$, we partition the vertices of $G_j$ into three sets:
\begin{align*}
H_j &= \bigl\{i \in V(G_j) \st d_{j,i} > k
\sqrt{e_j}\bigr\},
\\
M_j &= \bigl\{i \in V(G_j) \st d_{j,i} \in[
\sqrt{e_j}/k, k\sqrt{e_j}] \bigr\} ,
\\
L_j &= \bigl\{i \in V(G_j) \st d_{j,i} <
\sqrt{e_j}/k\bigr\}.
\end{align*}
We then partition the set of edges contributing to $e_j^r-e_j^{r,I}$ into
several classes, starting with the edges which have one endpoint of degree
$1$ in $L_j$ and one endpoint of degree at least $2$ in $M_j$. Denote the
number of these edges by $e_j^{L,r,1}$, and consider the expectation of
$(e_j^{L,r,1})^2$. We then bound $e_j^{L,r,1}$ by $\sum_{i\in M_j}\sum_{u\in
L_j} X_{iu}$, where $X_{iu} = 1[\{i,u\}\in E(G_j)]1[D_{j,i}^r\geq
2]1[D_{j,u}^r=1]$. Observe that $\EE[X_{iu}X_{i'u'}]\leq r^4/4e_j^2$ if
$i\neq i'$, because each of $i,i',u,u'$ must be selected by the sampling. As
a consequence,
\begin{align*}
\EE \bigl[\bigl(e_j^{L,r,1}\bigr)^2 \bigr] &\leq
\sum_{\substack{i,i' \in M_j\\i \ne i'\\u,u' \in L_j \\ \{i,u\}
, \{i',u'\}\in E(G_j)}} \EE[X_{iu}X_{i'u'}] +
\sum_{\substack{i \in
M_j\\u,u' \in L_j}} \E[X_{iu} X_{iu'}]
\\
&\leq\sum_{\{i,u\}, \{i',u'\}\in E(G_j)} \frac{r^4}{4e_j^2} + \sum
_{i \in M_j} \E \biggl[ \biggl(\sum_{u \in L_j}
X_{iu} \biggr)^2 \biggr]
\\
&\leq\frac{r^4}{4} + \sum_{i\in M_j}\EE\bigl[
\bigl(D_{j,i}^r\bigr)^2\bigr]
\\
&\leq\frac{r^4}{4} + \frac{r}{\sqrt{2e_j}}\sum_{i\in M_j}
\EE \bigl[\bigl(B_{j,i}^r\bigr)^2\bigr]
\\
&\leq\frac{r^4}{4} + \frac{r}{\sqrt{2e_j}}\sum_{i\in M_j}
\biggl(\frac{r^2}{2e_j}d_{j,i}^2+\frac{r}{\sqrt{2e_j}}d_{j,i}
\biggr)
\\
&\le\frac{r^4}{4} + \frac{r}{\sqrt{2e_j}}\sum_{i\in M_j}
\biggl(\frac{r^2k}{2\sqrt{e_j}}d_{j,i}+\frac{r}{\sqrt{2e_j}}d_{j,i}
\biggr),
\end{align*}
where in the last step we used that $d_{j,i}\leq k\sqrt{e_j}$ when $i\in
M_j$. Since $\sum_{i\in M_j}d_{j,i}\leq2e_j$, we see that for each $k$, the
right-hand side is bounded uniformly in $j$, as required.

The remaining contribution to $e_j^r$ will be bounded by
\[
e^{H,r}_j + e^{M,r}_j +
e^{L,r,\geq2}_j,
\]
where $e^{H,r}_j = \sum_{i\in H_j} D^r_{j,i}$, $e^{M,r}_j = \sum_{i\in
M_j}D^{M,r}_{j,i}$, $D_{j,i}^{M,r}$ is the degree of $i$ of edges in subgraph
of the sampled graph $\mathsf{Smpl} ({G_j},r/\sqrt{2\edges_j})$ induced by
restricting to vertices that belong to $M_j$ in $G_j$, and
\[
e^{L,r,\geq
2}_j = \sum_{i\in L_j}
D^r_{j,i}1\bigl[D^r_{j,i}\geq2
\bigr].
\]

Let $M_j^r\subseteq M_j$ be defined by keeping each vertex in $M_j$
independently with probability $r/\sqrt{2 e_j}$. Then $e^{M,r}_j \leq
|M_j^r|^2$. Observing that are at most $2k\sqrt{\edges_j}$ vertices in $M_j$,
since otherwise there would be too many edges, we stochastically bound
$|M_j^r|$ by $\vertices^{M,r}_j  \sim \binDist(2k\sqrt{\edges_j},
r/\sqrt{2\edges_j})$. Since the expectation of $(\vertices^{M,r}_j)^4$ is
bounded uniformly in $j$, this proves that for each $k$, $e^{M,r}_j$ is
square integrable uniformly in $j$, and hence uniformly integrable.

By the assumption of uniform sampling regularity and the fact that
\[
\EE\bigl[e^{H,r}_j\bigr] = \frac{r^2}{2\edges_j} \sum
_{i=1}^{\vertices
_j}d_{j,i}1 [ d_{j,i} > k
\sqrt{\edges_j} ],
\]
we may uniformly force $\EE[e^{H,r}_j]$ to be arbitrarily small by choosing
$k$ sufficiently large.

Finally, direct computation gives that
\[
\EE\bigl[D^r_{j,i} 1\bigl[D^r_{j,i}
\ge2\bigr]\bigr] =\frac{r}{\sqrt{2e_j}} \bigl(\EE \bigl[B^r_{j,i}
\bigr] - \Pr\bigl(B^r_{j,i}=1\bigr) \bigr)\le ({r}/{\sqrt{2
\edges_j}} )^3 d_{j,i}^2,
\]
whereby
\begin{align*}
\EE\bigl[e^{L,r,\geq2}_j\bigr] &\le \biggl(\frac{r}{\sqrt{2}}
\biggr)^3 \sum_{i\in L_j} \frac{d_{j,i}}{\sqrt{\edges_j}}
\frac{d_{j,i}}{\edges_j} \le \biggl(\frac{r}{\sqrt{2}} \biggr)^3
\frac{1}{k} \sum_{i\in L_j} \frac{d_{j,i}}{\edges_j} \le
\frac{r^3}{\sqrt{2}k},
\end{align*}
where the second line has used that $d_{j,i}/\sqrt{\edges_j} < 1/k$ for every
vertex $i$ in $L_j$.

Now, for any constant $c' > 0$,
\begin{align*}
\EE\bigl[e^r_j 1\bigl[e^r_j >
c'\bigr]\bigr]\le {}&\EE\bigl[(\bigl(e_j^{r,I}+e_j^{r,L,1}
+ e^{M,r}_j\bigr)1\bigl[(\bigl(e_j^{r,I}+e_j^{r,L,1}
+ e^{M,r}_j\bigr) > c'/2\bigr]\bigr]
\\
& {} + \EE\bigl[(\bigl(e_j^{r,I}+e_j^{r,L,1}
+ e^{M,r}_j\bigr)1\bigl[e^{L,r,\geq2}_j +
e^{H,r}_j > c'/2\bigr]\bigr]
\\
&{} + \EE\bigl[e^{L,r,\geq2}_j\bigr] + \EE\bigl[e^{H,r}_j
\bigr].
\end{align*}
For $\epsilon> 0$, we may guarantee that the last two terms are each at most
$\epsilon/4$ by choosing $k$ sufficiently large. For any fixed $k$, Markov's
inequality shows that $\lim_{c'\to\infty}\Pr(e^{L,r,\geq2}_j + e^{H,r}_j >
c'/2) = 0$. \cite{Kallenberg:2002}, Lemma~4.10, shows that for any uniformly
integrable family $\{X_j\}$ and sequence of events $A_1, A_2,\dots$ such that\break
$\lim_{k\to\infty}\Pr(A_k) = 0$, we have
$\lim_{k\to\infty}\sup_j\EE[X_j1[A_k]] = 0$; accordingly, invoking the
uniform integrability of $e_j^{r,I}+e_j^{r,L,1} + e^{M,r}_j$, we may choose
$c'$ (depending on $k$) large enough such that the second term is at most
$\epsilon/4$. Similarly, by uniform integrability, we may choose $c'$ large
enough such that the first term is at most $\epsilon/4$. Thus, for any
$\epsilon$ there is a $c' > 0$ such that
\[
\EE\bigl[e^r_j 1\bigl[e^r_j >
c'\bigr]\bigr] < \epsilon
\]
uniformly, as required.
\end{proof}

\begin{cor}\label{integ_of_lim}
The limiting graphex $\W=(I,S,W)$ in Lemma~\ref{lim_is_graphex} is
integrable, with $\|\W\|_1\leq1$ and $\int W(x,x) \,\intd{x} =
\lim_{j\to\infty}\loops(G_j)/\sqrt{2\edges(G_j)}$. Further, the bound is
saturated if and only if the graph sequence is uniformly sampling regular.
\end{cor}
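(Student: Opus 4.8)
The plan is to extract the limiting graphex $\W=(I,S,W)$ from \cref{lim_is_graphex} — which is non-random by \cref{lim_is_deter}, so that the limit $Q_r$ of $\sampD{G_j}{r}$ is literally $\GPD{\W}{r}$ — and then read off its $L^1$ data by comparing expected edge and loop counts of the sampled subgraphs with those of the limiting graphex process via \cref{expected_edges}. Fix $r\in\NNReals$, write $p_j=r/\sqrt{2\edges(G_j)}\to 0$, and let $H_r\dist\GPD{\W}{r}$, so that $\samp[p_j]{G_j}\convDist H_r$ and $H_r\equaldist\CAM{\KEG_r}$ for the graphex process $(\KEG_s)_{s\in\NNReals}$ generated by $\W$. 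The one identity driving everything is that a non-loop edge of $G_j$ survives $p_j$-sampling exactly when both its endpoints are selected, so (whether or not $G_j$ has loops, since $\edges$ counts only non-loop edges) $\EE[\edges(\samp[p_j]{G_j})]=\edges(G_j)\,p_j^2=r^2/2$ for every $j$.

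For integrability and the bound, I would use that the space of finite unlabeled graphs is discrete, so $\edges$ is continuous there; hence $\edges(\samp[p_j]{G_j})\convDist\edges(H_r)$ as $\mathbb{Z}_{\ge0}$-valued variables, and Fatou's lemma (i.e.\ lower semicontinuity of the expectation under weak convergence of nonnegative variables) gives $\EE[\edges(H_r)]\le\liminf_j\EE[\edges(\samp[p_j]{G_j})]=r^2/2$. On the other hand $\edges(H_r)\equaldist\edges(\KEG_r)$, and by \cref{expected_edges}, $\EE[\edges(\KEG_r)]=\tfrac{r^2}{2}\|\W\|_1$. Therefore $\|\W\|_1\le 1<\infty$, which forces $W$ and $S$ to be integrable, i.e.\ $\W$ is an integrable graphex.

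To identify $\int W(x,x)\intd x$, I would argue that since $G_j$ has no multiple edges, each loop of $G_j$ lies on a distinct vertex, which is never isolated, so its loop survives $p_j$-sampling precisely when the vertex is selected; thus $\loops(\samp[p_j]{G_j})\sim\binDist(\loops(G_j),p_j)$. Since $\loops(G_j)=O(\sqrt{\edges(G_j)})$ for any sampling convergent sequence (as already noted in the proof of \cref{lim_is_graphex}), this binomial has bounded mean and bounded variance, hence is uniformly integrable, so $\EE[\loops(\samp[p_j]{G_j})]\to\EE[\loops(H_r)]$. The left side equals $r\,\loops(G_j)/\sqrt{2\edges(G_j)}$ and, by \cref{expected_edges}, the right side equals $\EE[\loops(\KEG_r)]=r\int W(x,x)\intd x$; dividing by $r$ shows that $\loops(G_j)/\sqrt{2\edges(G_j)}$ converges, with limit $\int W(x,x)\intd x$.

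Finally, for the saturation claim, note $\|\W\|_1=1$ iff $\EE[\edges(H_r)]=r^2/2$. If the sequence is uniformly sampling regular, then by \cref{lem:uniform-int} the family $\{\edges(\samp[p_j]{G_j})\}_j$ is uniformly integrable, so $\EE[\edges(\samp[p_j]{G_j})]\to\EE[\edges(H_r)]$; since the left side is constantly $r^2/2$, we get $\|\W\|_1=1$. Conversely, if $\|\W\|_1=1$, then $\edges(\samp[p_j]{G_j})\convDist\edges(H_r)\ge 0$ with $\EE[\edges(\samp[p_j]{G_j})]=r^2/2\to r^2/2=\EE[\edges(H_r)]<\infty$, and a nonnegative sequence converging in distribution whose expectations converge to the (finite) expectation of the limit is uniformly integrable, so \cref{lem:uniform-int} gives uniform sampling regularity. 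The only genuinely delicate point — the step I would be most careful to justify — is the passage from weak convergence of the finite sampled graphs to control of the first moments of $\edges$ and $\loops$: a Fatou inequality in one direction, and uniform integrability (supplied by \cref{lem:uniform-int} for edges and by the binomial variance bound for loops) to get equality; everything else is bookkeeping on top of \cref{expected_edges}.
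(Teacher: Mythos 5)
Your argument is correct and follows essentially the same route as the paper's proof: the bound $\|\W\|_1\le 1$ via lower semicontinuity of the expectation of $\edges$ under weak convergence (the paper implements this Fatou-type step with explicit truncations $f_\lambda(x)=x\,1[x\le\lambda]$ and monotone convergence), the identification of $\int W(x,x)\intd x$ from the exact binomial law $\binDist(\loops(G_j),r/\sqrt{2\edges(G_j)})$ of the sampled loop count, and the saturation claim from \cref{lem:uniform-int} together with the equivalence of uniform integrability and convergence of first moments for nonnegative, distributionally convergent sequences. Your explicit variance bound justifying the convergence of $\EE[\loops(\samp[p_j]{G_j})]$ is a small added detail the paper leaves implicit, but the structure and key inputs (\cref{expected_edges}, \cref{lem:uniform-int}) are the same.
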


\begin{proof}
Let $\xi$ be the limiting point process as in the proof of
Lemma~\ref{lim_is_graphex}, let $\KEG_r=\xi|_{[0,r]^2}$, let
$\loops_r=\ell(\KEG_r)$ and let $e^r$ and $e^r_j$ be defined by
$e^r=\edges(\xi(\cdot\cap[0,r)^2))$ and $e^r_j=\edges(\mathsf
{Lbl}(G_j)(\cdot \cap[0,r)^2))$.

Observe that $\loops(\mathsf{Smpl} ({G_j},r/\sqrt{2\edges_j}))  \sim
\binDist(\loops(G_j),r/\sqrt{2\edges_j})$, and that loops in the sampled
subgraph can only occur by selecting loops in the original graph. It then
follows that $\EE[\loops_r] =
r\lim_{j\to\infty}\loops(G_j)/\sqrt{2\edges(G_j)}$ for all $r\in \Reals_{+}$.
Comparing this expression with Lemma~\ref{expected_edges} establishes the
claim about the diagonal part of~$W$.

We have $\EE[e^r] \le\lim_{j\to\infty}\EE[e^r_j] = \frac {1}{2}r^2$ by a
version of Fatou's lemma \cite{Kallenberg:2002},\break Lemma~4.11. Comparing with
Lemma~\ref{expected_edges} establishes that $\|\W\|_1 \le1$.

The second claim follows from the observation that $\edges(\mathsf
{Lbl}(G_j)(\cdot \cap[0,r)^2))\equaldist\edges(\mathsf{Smpl}
({G_j},r/\sqrt{2\edges_j}))$, Lemma~\ref{lem:uniform-int}, and the fact that
a sequence of non-negative random variables $X_1, X_2, \dots$ that converges
in distribution to $X$ also satisfies $\EE[X_j] \to\EE[X]$ if and only if it
is uniformly integrable.
\end{proof}

We now have the ingredients of the main result characterizing the limits of
sampling convergent sequences.

\begin{theorem}\label{lim_is_det_int_graphex}
Let $G_1,G_2, \dots$ be a sampling convergent graph sequence such that
$\edges(G_j) \to\infty$ as $j\to\infty$. Then the limit is a nonrandom
graphex $\W$ such that $\|\W\|_1 \le1$, in the sense that $\mathrm {SmplD}
(G_j,r ) \to\mathrm{GPD}  (\W,r ) $ weakly as $j \to\infty$ for all $r
\in\Reals_{+}$. The bound on $\|\W\|_1$ is saturated if and only if the
sequence is uniformly sampling regular.
\end{theorem}

\begin{proof}
Immediate from Lemmas~\ref{lim_is_graphex} and \ref{lim_is_deter} and
Corollary \ref{integ_of_lim}.
\end{proof}

In some other sparse graph limit theories \cite
{Borgs:Chayes:Cohn:Zhao:2014:sgc1,Borgs:Chayes:Cohn:Zhao:2014:sgc2,Borgs:Chayes:Cohn:Holden:2016},
only graph sequences satisfying certain constraints are subsequentially
convergent. We prove a compactness result in Section~\ref{sec:metrization}
that has the following corollary.

\begin{theorem}\label{subseq_conv}
Every sequence of graphs $G_1, G_2, \dots$ satisfying $\loops(G_j) =
O(\sqrt{\edges(G_j)})$ is subsequentially sampling convergent.
\end{theorem}

\begin{proof}
This will be immediate from Theorem~\ref{theorem:compact}.
\end{proof}

On the basis of this result, one might hope that sampling convergent limits
are informative about all sparse graph sequences, or at least all uniformly
sampling regular sequences. The next result helps clarify that there are
further limitations. Intuitively speaking, it shows that the sampling limit
is degenerate for sparse graph sequences with relatively homogeneous degrees.
In particular, the next result applies to sequences of bounded degree graphs,
for which there is already a well developed limit theory
\cite{Benjamini:Schramm:2001:1}. It also applies to the random graph
$G_{n,p}$ as long as $p\to0$ and $n^2p\to\infty$ as $n\to\infty$, or more
generally, to inhomogeneous random graphs obtained by first choosing a dense
random graph sequence generated by a bounded graphon and then subsampling it
so that it becomes sparse, again as long as it is dense enough to guarantee
that the number of edges goes to infinity a.s.

To state the theorem, we define the average degree and square average degree
of a graph $G$ as $\overline{d}{(G)}=\frac{1}{\vertices(G)}\sum_{i}d_i(G)$
and $\overline{d^2}(G)=\frac{1}{\vertices(G)}\sum_{i}(d_i(G))^2$, where
$d_i(G)$ is the degree of vertex $i$ not counting loops. We also recall that
the edge density of $G$ is defined as
$\rho(G)=2\edges(G)/(\vertices(G))^2=\overline{d}(G)/\vertices(G)$.

\begin{theorem}\label{ultra_sparse_lim_triv}
Let $G_1,G_2,\dots$ be a sampling convergent graph sequence with $\edges(G_j)
\to\infty$ as $j\to\infty$. Suppose that the maximal degree of $G_j$ is
$o(\sqrt{\edges(G_j)})$ or, more generally, that
%
\begin{equation}
\label{deg-cond} \frac{\overline{d^2}(G)}{(\overline{d}(G))^2}\sqrt{\rho(G_j})=o(1).
\end{equation}
Then $G_1,G_2,\dots$ is sampling convergent to a graphex of the form
$(1/2,0,W)$, where the graphon $W$ is zero except on the diagonal.
\end{theorem}

\begin{proof}
Let $r \in\Reals_{+}$. For brevity, let $\vertices_j = \vertices(G_j)$,
$\edges_j=\edges(G_j)$ and $p_j = {r}/{\sqrt{2e_j}}$. Let $d_{j,i}$ be the
degree of vertex $i$ in $G_j$ and let $D^r_{j,i}$ be the degree of this
vertex in a $p_j$-sampled subgraph, where $D^r_{j,i} = 0$ is understood to
mean that the vertex is not included in the subgraph.

We first prove that the assumption \eqref{deg-cond} implies uniform sampling
regularity. To this end, we bound
\begin{align*}
\frac{1}{\edges(G_j)} \sum_{i=1}^{\vertices(G_j)}d_{j,i}1
\bigl[ d_{j,i} > k \sqrt{\edges(G_j)} \bigr] &\leq
\frac{1}{k(\edges(G_j))^{3/2}} \sum_{i=1}^{\vertices
(G_j)}(d_{j,i})^2
\\
&=2 \frac{\sqrt2}{k} \frac{\overline{d^2}(G_j)\sqrt{\rho(G_j)}}{
(\overline{d}(G_j))^{2}},
\end{align*}
from which the claim follows.

Next, we recall that
\[
D^r_{j,i} \given B_{j,i} \sim
(1-p_j)\delta_0 + p_j \delta_{B_{j,i}}
\qquad \text{where } B_{j,i} \sim \binDist(d_{j,i},p_j),
\]
so in particular
\begin{align*}
\Pr\bigl(D^r_{j,i} \ge2\bigr) &= p_j\bigl(1-
\bigl[(1-p_j)^{d_{j,i}} + d_{j,i} p_j
(1-p_j)^{d_{j,i}-1}\bigr]\bigr) \le p_j^3
d_{j,i}^2,
\end{align*}
using Bernoulli's inequality. Let $N_j$ be the number of vertices with degree
greater than $1$ in the sampled subgraph. Then
\begin{align*}
\EE[N_j] &\le\sum_{i \le\vertices_j}\Pr
\bigl(D^r_{j,i} \ge2\bigr) \le p_j^3
\sum_{i \le\vertices_j} d_{j,i}^2
=r^3 \frac{\overline{d^2}(G)}{(\overline{d}(G))^2}\sqrt{\rho(G_j)} = o(1).
\end{align*}
Markov's inequality then implies that $N_j \convPr0$ as $j \to\infty$. Since
$r$ was arbitrary, this implies convergence to a graphex of the claimed form.
\end{proof}

As a corollary of the theorem, the limit of a sequence of preferential
attachment graphs is the pure edge graphex. More generally, we have the
following corollary.

\begin{cor} Let $G_1,G_2,\dots$ be a random sequence of simple graphs
such that almost surely (\textup{a}) the empirical degree distribution
converges to a distribution with finite, positive mean, (\textup{b}) the
average degree converges to the mean of the limiting degree distribution and
(\textup{c}) $\limsup_{j\to\infty} \frac {\max_i
d_i(G_j)}{\sqrt{\vertices(G_j)}}<\infty$ and
$\lim_{j\to\infty}\edges(G_j)=\infty$. Then a.s., $G_1,G_2,\dots$ is sampling
convergent to the graphex $(1/2,0,0)$.
\end{cor}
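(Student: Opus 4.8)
The plan is to derive the corollary from \cref{ultra_sparse_lim_triv} once its degree hypothesis \eqref{deg-cond} has been verified, manufacturing sampling-convergent subsequences via \cref{thm:compact} and invoking uniqueness of the limit to pass to the full sequence. First I would restrict to the probability-one event on which (a)--(c) hold and fix a realization; it then suffices to show that this deterministic sequence is sampling convergent to $(1/2,0,0)$. By \cref{thm:compact} every subsequence of $G_1,G_2,\dots$ has a further subsequence $(G_{j_k})_k$ that is sampling convergent, and $\edges(G_{j_k})\to\infty$ by (c). Granting that $(G_{j_k})_k$ satisfies \eqref{deg-cond} (established below), \cref{ultra_sparse_lim_triv} identifies its sampling limit as a graphex $(1/2,0,W)$ with $W$ vanishing off the diagonal; since the $G_j$ are simple we have $\loops(G_{j_k})=0$, so \cref{integ_of_lim} forces $\int_{\NNReals}W(x,x)\intd x=\lim_k\loops(G_{j_k})/\sqrt{2\edges(G_{j_k})}=0$, whence $W=0$ and the limit is the pure-edge graphex $(1/2,0,0)$. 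Because sampling convergence is governed by weak convergence of the canonical labelings, which are random elements of a Polish space (\cref{cannon_emb_convs}), this mode of convergence is metrizable; hence every subsequence having a further subsequence that converges to the \emph{same} limit $(1/2,0,0)$ yields sampling convergence of the whole sequence to $(1/2,0,0)$.

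The substance is the verification of \eqref{deg-cond}. Expressing $\overline{d^2}$, $\overline d$ and $\rho$ through degree sums, \eqref{deg-cond} is equivalent to $\sum_i d_i(G_j)^2=o\bigl(\edges(G_j)^{3/2}\bigr)$. The key input is that (a) and (b) together make the empirical degree distributions $\{\mu_j\}$ uniformly integrable: this is the standard fact that weak convergence of probability measures on $\NNReals$ together with convergence of their first moments to the (finite) first moment of the weak limit implies $\lim_{T\to\infty}\sup_j\int_{(T,\infty)}x\,\mu_j(\mathrm{d}x)=0$. In graph language, for every $\epsilon>0$ there is $T<\infty$ with $\sum_{i:\,d_i(G_j)>T}d_i(G_j)<\epsilon\,\vertices(G_j)$ for all $j$. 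Splitting the degree sum at $T$ gives $\sum_i d_i(G_j)^2\le T\sum_i d_i(G_j)+\bigl(\max_i d_i(G_j)\bigr)\sum_{i:\,d_i(G_j)>T}d_i(G_j)\le 2T\,\edges(G_j)+\epsilon\bigl(\max_i d_i(G_j)\bigr)\vertices(G_j)$. Now (c) gives $\max_i d_i(G_j)=O\bigl(\sqrt{\vertices(G_j)}\bigr)$, and absence of isolated vertices gives $\vertices(G_j)\le\sum_i d_i(G_j)=2\edges(G_j)$ (equivalently, together with (a) this forces the limiting mean to be at least $1$, so (b) yields $\vertices(G_j)=O(\edges(G_j))$); hence $\bigl(\max_i d_i(G_j)\bigr)\vertices(G_j)=O\bigl(\vertices(G_j)^{3/2}\bigr)=O\bigl(\edges(G_j)^{3/2}\bigr)$. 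Therefore $\limsup_j\sum_i d_i(G_j)^2/\edges(G_j)^{3/2}=O(\epsilon)$, and since $\epsilon$ is arbitrary this limit superior is $0$, establishing \eqref{deg-cond}.

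I expect the main obstacle to be precisely this uniform-integrability step and its interaction with (c): vertices whose degree is of order $\sqrt{\vertices(G_j)}$ are exactly the ones that can make $\sum_i d_i(G_j)^2$ comparable to $\edges(G_j)^{3/2}$, so one must bound their individual degrees through (c) while bounding the total degree mass they carry through the uniform integrability coming from (a)--(b)---neither ingredient alone is enough---and the bound $\vertices(G_j)=O(\edges(G_j))$ (absence of isolated vertices, equivalently positivity of the limiting mean) is genuinely needed, since without it both \eqref{deg-cond} and the stated conclusion can fail. A secondary technical point is the passage from the subsequential convergence of \cref{thm:compact} to convergence of the full sequence, which rests on uniqueness of the sampling limit together with metrizability of the relevant mode of convergence; by contrast, once convergence is known the identification of the limit is immediate from \cref{ultra_sparse_lim_triv} and \cref{integ_of_lim}, using simplicity of the graphs to eliminate the diagonal part of $W$.
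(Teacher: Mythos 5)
Your proposal is correct and follows essentially the same route as the paper: verify \eqref{deg-cond} by combining uniform integrability of the empirical degree distributions (from (a) and (b)) with the max-degree bound (c), then identify every subsequential sampling limit via \cref{ultra_sparse_lim_triv} and simplicity of the graphs, and conclude by subsequential compactness. The one substantive difference is that you explicitly invoke $\vertices(G_j)=O(\edges(G_j))$ (no isolated vertices, equivalently positivity of the limiting mean), and you are right that this is genuinely needed: unwinding the definitions, the paper's displayed bound should read $\eps C_j\,(\overline{d}(G_j))^{-3/2}$ rather than $\sqrt{\overline{d}(G_j)}\,\eps C_j$, which only closes when $\overline{d}(G_j)$ stays bounded away from $0$; and indeed if one allows, say, a clique on $m_j$ vertices padded with $n_j\gg m_j^2$ isolated vertices, then (a)--(c) all hold while the sampling limit is not $(1/2,0,0)$. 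So your proof is sound under the (implicit, but necessary) convention that graphs are identified with their edge sets, and it patches a small gap in the paper's own argument.
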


\begin{proof}
Let $P_d$ be the limit of the probability that a random vertex in $G_j$ has
degree $d$, let $\overline{d}$ be the mean of $P$ and let $d_{j,i}$ and
$\vertices_j$ be as in the last proof. Then
\[
\begin{aligned} \lim_{j\to\infty}\frac{1}{\vertices_j}\sum
_i d_{j,i}1[d_{j,i}\geq k] &
\equalas \overline{d}-\lim_{j\to\infty}\frac{1}{\vertices_j}\sum
_i d_{j,i}1[d_{j,i}<k]
\\
&\equalas\overline{d}-\sum_{d<k}dP_d=
\sum_{d\geq k} dP_d. \end{aligned}
\]

Given $\eps>0$, let $k$ be a (possibly random) finite constant such that the
right-hand side is at most $\eps/2$, and let $J<\infty$ be such that for
$j\geq J$,
\[
\frac{1}{\vertices_j}\sum_i d_{j,i}1[d_{j,i}
\geq k]\leq\eps.
\]

Defining $C_j=\frac{1}{\sqrt{\vertices_j}}\max_i d_{j,i}$, we then have that
\[
\frac{1}{\vertices_j}\sum_i d_{j,i}^2
\leq \frac{C_j}{\sqrt{\vertices_j}}\sum_i d_{j,i}1[d_{j,i}
\geq k]+\frac{k}{\vertices_j}\sum_i d_{j,i}
\leq\eps C_j\sqrt{\vertices_j}+k\overline{d}(G_j).
\]
Using that $\rho(G_j)=\overline{d}(G_j)/\vertices_j$, this shows that
\[
\frac{\sqrt{\rho(G_j)}}{(\overline{d}(G_j))^2}\overline{d^2}(G_j) \leq
\overline{d}(G_j)^{-3/2} \eps C_j+
\frac{k}{\sqrt{\vertices
_j\overline{d}(G_j)}}.
\]
Recalling that $\vertices_j\overline{d}(G_j)=2\edges(G_j)$, we can now first
take the limit superior over $j$ and then the limit $\eps\to0$ to see that
the condition \eqref{deg-cond} is a.s. satisfied. To complete the proof, we
use that every sequence of loopless graphs $G_1,G_2,\dots$ with
$\edges(G_j)\to\infty$ has a convergent subsequence.
\end{proof}

\section{Graphex processes are sampling convergent}\label{sec:gp_samp_conv}

We now turn to characterizing the sampling limits of sequences of graphs
generated by a graphex process. Let $s_1, s_2, \dots$ be some sequence such
that $s_j \upto\infty$ as $j \to\infty$ and let $G_j = \mathcal{G}
 (\KEG_{s_j} ) $,
where $\KEG$ is generated by an integrable graphex $\W$. Intuitively
speaking, our aim is to show that the sampling limit of $G_1, G_2, \dots$ is
$\W$.

The basic strategy makes use of the consistent estimation results first
established in \cite{Veitch:Roy:2016}, although we will appeal to the
technically stronger versions of \cite{Janson:2017}. We need the following
(implicit) result from those papers.

\begin{lem}\label{r_s_samp_conv}
Let $G_s = \mathcal{G}  (\KEG_{s} ) $, where $(\KEG _s)_{s\in\Reals_{+}}$ is
generated by an integrable graphex $\W$, then $\Pr(\mathsf{Smpl} (G_s, r /s)
\in\cdot \given G_s) \to\mathrm{GPD}  (\W,r ) $ weakly almost surely as $s
\to \infty$, for all $r \in \Reals_{+}$.
\end{lem}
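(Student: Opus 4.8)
The plan is to reduce the statement to the consistent estimation theorem for integrable graphexes and then pass from with-replacement to without-replacement sampling. Write $G_s=\CAM{\KEG_s}$ and fix $r\in\NNReals$; we may assume $\W$ is not the trivial graphex, since otherwise both $G_s$ and $\CAM{\KEG_r}$ are a.s.\ empty and there is nothing to prove. The first step uses the identity (noted in the discussion in \cref{GS_conv}) that for any finite graph $G$ and dilation $s>0$, a with-replacement $(r/s)$-sampling of $G$ has the same law as a time-$r$ graphex process sample from the dilated empirical graphon $\widehat{W}_{(G,s)}$, i.e.\
\[
\Pr\bigl(\sampWR[r/s]{G}\in\cdot\given G\bigr)=\GPD{\widehat{W}_{(G,s)}}{r}.
\]
Applied to $G=G_s$, the claim for with-replacement sampling --- that $\Pr(\sampWR[r/s]{G_s}\in\cdot\given G_s)\to\GPD{\W}{r}$ weakly a.s.\ --- is exactly the assertion $\widehat{W}_{(G_s,s)}\convEstGP\W$ a.s., which follows from the consistent estimation results of \citep{Veitch:Roy:2016} (or rather their technically stronger versions in \citep{Janson:2017}). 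Loop edges enter only through the diagonal $x\mapsto W(x,x)$ and are handled by the usual bookkeeping.

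The second step transfers this to $(r/s)$-sampling without replacement via \cref{lem:coupling}, applied with $p=r/s$ and $G=G_s$:
\[
\Pr\bigl(\samp[r/s]{G_s}\neq\sampWR[r/s]{G_s}\given G_s\bigr)\le\Bigl(2\edges(G_s)\tfrac{r^2}{s^2}+3\sqrt{2\edges(G_s)}\,\tfrac rs+2\loops(G_s)\tfrac rs+4\Bigr)\tfrac rs .
\]
By \cref{expected_edges}, together with the a.s.\ asymptotics $\edges(\KEG_s)/s^2\to\tfrac12\|\W\|_1$ (the extension to integrable graphexes of the result of \citep{Borgs:Chayes:Cohn:Holden:2016} quoted in \cref{GS_conv}) and the fact that the loops are a thinning of a unit-rate Poisson process, so that $\loops(\KEG_s)=O(s)$ a.s., one has a.s.\ $\edges(G_s)=O(s^2)$ and $\loops(G_s)=O(s)$; hence the bracket above is $O(r^2)$ as $s\to\infty$ and the whole bound is $O(r^3/s)\to 0$. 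Since this bounds the total variation distance between the two conditional laws and total-variation convergence implies weak convergence, combining the two steps yields $\Pr(\samp[r/s]{G_s}\in\cdot\given G_s)\to\GPD{\W}{r}$ weakly a.s.\ for each fixed $r$; as the exceptional null sets do not depend on $r$, this holds a.s.\ simultaneously for all $r\in\NNReals$.

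The main obstacle is the first step. The required statement is, as the text notes, only \emph{implicit} in \citep{Veitch:Roy:2016,Janson:2017}, so the work lies in matching the precise hypotheses --- integrability of $\W$, convergence along the continuum $s\to\infty$ rather than along a prescribed sequence $s_j\upto\infty$, and the treatment of loops --- to the theorems actually established there; to upgrade a sequential statement to the continuum one can take a countable dense set of times together with monotonicity in $s$ of the sampling construction. By contrast, once the with-replacement version is available, the passage via \cref{lem:coupling} is routine, since with $p=r/s$ every term in the coupling bound is killed by the single surviving factor of $p$.
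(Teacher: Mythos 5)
Your proposal is correct and follows essentially the same route as the paper: identify $\Pr(\sampWR[r/s]{G_s}\in\cdot\given G_s)$ with $\GPD{\widehat{W}_{(G_s,s)}}{r}$, invoke the consistent estimation theorem of \citep{Janson:2017} for the with-replacement version, and then transfer to $p$-sampling via \cref{lem:coupling} using the a.s.\ growth rates $\edges(G_s)=O(s^2)$ and $\loops(G_s)=O(s)$ (which the paper obtains from \cref{edge_scaling} rather than \cref{expected_edges}, but your Poisson-thinning justification amounts to the same thing).
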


\begin{proof}
Let $\widehat{W}_{(G_s,s)}$ be the empirical graphon of $G_s$ stretched so
that each pixel is $1/s \times1/s$. \cite{Janson:2017}, Theorem~5.1, shows
that $\mathrm{GPD}  (\widehat{W}_{(G,s)},r ) \to\mathrm {GPD}  (\W,r ) $
weakly almost surely. As noted earlier, $\mathrm{GPD}
(\widehat{W}_{(G_s,s)},r ) = \Pr(\mathsf{SmplWR} ({G_s},\break  r/{s})
\in\cdot \given G_s) $, and so the result follows from the asymptotic
equivalence of with and without replacement sampling,
Lemma~\ref{lem:coupling}. Indeed, for each fixed $r$, we have that a.s.
$\edges(G_s) (r/s)^3\to0$ and $\loops(G_s)(r/s)^2\to0$ as $s\to \infty$ (by,
e.g., Lemma~\ref{edge_scaling} below). Lemma~\ref{lem:coupling} then implies
that conditioned on $(\KEG_s)_{s\in\Reals_{+}}$, the total variation distance
between the with and without replacement distributions goes to zero a.s. as
$s\to\infty$.
\end{proof}

To drop the latent {time}s, we will need an extension of a result of
\cite{Borgs:Chayes:Cohn:Holden:2016} relating $\edges(G_j)$ and $s_j$. It
will be convenient to partition each $\KEG_s$ into three components, which
correspond to the three terms in \eqref{KEGgen}. We will use the notation of
the Kallenberg representation theorem
(Theorem~\ref{theorem:graphex_rep_theorem}). Let $\PP$ be the latent Poisson
process used in the Kallenberg representation construction, and let $\PP_s$
be the restriction of $\PP$ to $[0,s) \times \Reals_{+}$. We partition
$\KEG_s$ into the following three pieces, corresponding to the three terms in
the representation theorem:
\begin{enumerate}
\item[1.]$\KEG_s^W$: the edge induced subgraph given by restricting to edges
    between vertices that belong to the underlying Poisson process $\PP_s$;
    this is the part of the graph generated by $(0,0,W)$.
\item[2.]$\KEG_s^S$: the edge induced subgraph given by restricting to edges
    where one vertex belongs to any latent star Poisson process
    $\sigma_{jk}$; this is the part of the graph generated by $(0,S,0)$.
\item[3.]$\KEG_s^I$: the induced subgraph given by restricting to the
    remaining edges; this is the part of the graph generated by $(I,0,0)$.
\end{enumerate}

\begin{lem}\label{edge_scaling}
Let $(\KEG_s)_{s\in\Reals_{+}}$ be a graphex process generated by graphex
$\W=(I,S,W)$, and let $\edges^W_{s}$, $\edges^S_{s}$ and $\edges ^I_{s}$ be
the number of edges of $\KEG_{s}^W$, $\KEG_{s}^S$ and $\KEG_{s}^I$,
respectively. Then, almost surely,
\begin{align*}
\lim_{s\to\infty}\edges^W_{s}/s^2
&= \frac{1}{2} \llVert W \rrVert _1,\qquad \lim
_{s\to\infty}\edges^S_{s}/s^2 =
\llVert S \rrVert _1,\qquad \lim_{s\to\infty}
\edges^I_{s}/s^2 = I,\quad \text{and}
\\
\lim_{s\to\infty}\loops(\KEG_s)/s &=
\int W(x,x) \,\intd{x}.
\end{align*}
\end{lem}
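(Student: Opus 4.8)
The plan is to treat the four limits separately, exploiting that the edge sets generated by the three components of $\W$ are disjoint and that loop edges can only arise from the diagonal of the $W$-part: since a.s.\ $\sigma_{jk}\neq\theta_j$ and $\rho_k\neq\rho'_k$, the $S$- and $I$-components produce no loops. The only external input I would invoke is the almost sure convergence $e(G_s)/s^2\to\tfrac12\|W\|_1$ for graphon processes generated by $(0,0,W)$, from \citep{Borgs:Chayes:Cohn:Holden:2016}; every other limit I would reduce to the strong law of large numbers for (time-inhomogeneous) Poisson processes.

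For the $W$-component, I would observe that the first sum in \eqref{KEGgen} uses only the Poisson process $\{(\theta_i,\vartheta_i)\}$ and the uniforms $\zeta_{\{i,j\}}$, so $\KEG_s^W$ is literally the time-$s$ graph of the graphon process generated by $(0,0,W)$ and $\edges_s^W$ is its number of non-loop edges; the cited result then gives $\edges_s^W/s^2\to\tfrac12\|W\|_1$ a.s.\ directly. The loop count I would handle the same way: $\loops(\KEG_s)$ is the number of points $(\theta_i,\vartheta_i)$ with $\theta_i\le s$ and $\zeta_{\{i,i\}}\le W(\vartheta_i,\vartheta_i)$, and marking the unit-rate Poisson process $\{(\theta_i,\vartheta_i)\}$ by the independent uniforms $\zeta_{\{i,i\}}$ makes $\{(\theta_i,\vartheta_i,\zeta_{\{i,i\}})\}$ a unit-rate Poisson process on $\NNReals^2\times[0,1]$. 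Hence $s\mapsto\loops(\KEG_s)$ is a homogeneous Poisson process of rate $\int_{\NNReals} W(x,x)\intd x$ (finite by the graphex axioms), and the Poisson SLLN gives $\loops(\KEG_s)/s\to\int W(x,x)\intd x$ a.s.

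For the $I$-component, the non-loop edges of $\KEG_s^I$ are in bijection with the points $(\rho_k,\rho'_k,\eta_k)$ of the driving unit-rate Poisson process on $\NNReals^3$ lying in $[0,s]^2\times[0,I]$, so $s\mapsto\edges_s^I$ has independent increments (successive $s$-intervals correspond to disjoint L-shaped regions) and $\poiDist(Is^2)$ marginals; it is thus a time-inhomogeneous Poisson process with mean function $Is^2$, and after a deterministic time change to a rate-$1$ process the SLLN yields $\edges_s^I/s^2\to I$ a.s. For the $S$-component, the key step is to check that $\{(\theta_j,\vartheta_j,\sigma_{jk},\chi_{jk})\}_{j,k}$ is a unit-rate Poisson process on $\NNReals^4$: conditionally on the base $\{(\theta_j,\vartheta_j)\}$ each family $\{(\sigma_{jk},\chi_{jk})\}_k$ is an independent unit-rate Poisson process, and a Poisson cluster process whose clusters are Poisson is itself Poisson (see, e.g., \citep{Daley:Vere-Jones:2003:v2}). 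The non-loop star edges in $[0,s]^2$ are then in bijection (a.s.\ without repetition, since all labels are a.s.\ distinct) with the points of this process in the region $\{\theta\le s,\ \sigma\le s,\ \chi\le S(\vartheta)\}$, which has Lebesgue measure $s^2\|S\|_1$. As before, $s\mapsto\edges_s^S$ has independent increments and $\poiDist(s^2\|S\|_1)$ marginals, hence is a time-inhomogeneous Poisson process with mean $s^2\|S\|_1$, and $\edges_s^S/s^2\to\|S\|_1$ a.s.

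The step I expect to require the most care is the $S$-component bookkeeping: justifying that the combined point process on $\NNReals^4$ is genuinely Poisson, that $\edges_s^S$ coincides a.s.\ with its count on the stated region, and that this region is measurable with the claimed measure; once this is in place the convergence is immediate from the Poisson SLLN. (If $\|W\|_1=\infty$ or $\|S\|_1=\infty$ the respective limits are $+\infty$, obtained by monotone truncation of $W$ or $S$; in the applications of this lemma $\W$ is integrable and all four limits are finite.) Combining the four statements completes the proof.
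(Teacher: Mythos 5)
Your treatment of the $W$-component, the $I$-component, and the loops is correct and essentially matches the paper's proof: the paper likewise quotes \citep[Proposition~30]{Borgs:Chayes:Cohn:Holden:2016} for $\edges^W_s$, and handles $\edges^I_s$ and $\loops(\KEG_s)$ by the law of large numbers for (marked) Poisson processes, exactly as you do. The problem is the $S$-component, which is also the step you flagged as delicate. The claim you lean on there --- that $\{(\theta_j,\vartheta_j,\sigma_{jk},\chi_{jk})\}_{j,k}$ is a unit-rate Poisson process on $\NNReals^4$ because ``a Poisson cluster process whose clusters are Poisson is itself Poisson'' --- is false. A Poisson process on $\NNReals^4$ with Lebesgue intensity almost surely has no two points sharing their first two coordinates, whereas your process puts an entire infinite cluster on each slice $\{(\theta_j,\vartheta_j)\}\times\NNReals^2$. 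More to the point, the count on your region $\{\theta\le s,\ \sigma\le s,\ \chi\le S(\vartheta)\}$ is, conditionally on the base process, $\poiDist\bigl(s\sum_{j:\theta_j\le s}S(\vartheta_j)\bigr)$; marginally this is a mixed Poisson (Cox) variable with random mean, which is overdispersed and is \emph{not} $\poiDist(s^2\|S\|_1)$. The increments in $s$ are not independent either, since the increment over $(s_1,s_2]$ includes new rays attached to old base points with $\theta_j\le s_1$. So both ingredients of your SLLN argument for $\edges^S_s$ (Poisson marginals and independent increments) fail. (Neyman--Scott processes are the standard counterexample to the cluster-process claim.)

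The conclusion is nevertheless true, and the repair is exactly the two-stage argument the paper uses: condition on the base process $\PP_s$, note that $\edges^S_s\given\PP_s\dist\poiDist\bigl(s\sum_{x_i\in\PP_s}S(x_i)\bigr)$, and since the conditional mean diverges a.s., conclude $\edges^S_s/\bigl(s\sum_{x_i\in\PP_s}S(x_i)\bigr)\to 1$ a.s.; then apply the law of large numbers for Poisson processes (Campbell) to get $\frac{1}{s}\sum_{x_i\in\PP_s}S(x_i)\to\|S\|_1$ a.s., and multiply the two limits. If you want to keep your formulation, you must treat $\edges^S_s$ as a Cox count and deal with the randomness of the directing measure explicitly rather than asserting that it is Poisson.
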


\begin{proof}
First, $\edges^W_{s} / s^2 \to\frac{1}{2}\|W\|_1 \as$ by
\cite{Borgs:Chayes:Cohn:Holden:2016}, Proposition~30.

The case $\|S\|_1 = 0$ is trivial. Assume $\|S\|_1 > 0$. The star component
of the graphex process can be understood as assigning a $\poiDist(sS(x_i))$
number of rays to each point of the underlying point process $(t_i,x_i)
\in\PP_{s}$, independent of everything else. By the additive property of
independent Poisson distributions, we then have $\edges^S_{s} \given\PP_{s}
\sim \poiDist(s\sum_{(t_i,x_i) \in \PP_{s}}S(x_i))$. Since $s \sum_{(t_i,x_i)
\in\PP_{s}}S(x_i) \upto\infty\as$ as $s \to\infty$, the law of large numbers
implies $\edges^S_{s} / (s \sum_{(t_i,x_i) \in \PP_{s}}S(x_i)) \to1 \as$ as
$j\to\infty$. The law of large numbers for Poisson processes gives
$\sum_{(t_i,x_i) \in\PP_{s}}S(x_i) / s \to\|S\|_1 \as$ as $j \to\infty$,
whereby $\edges^S_{s} / s^2 \to\|S\|_1 \as$ as $j\to\infty$.

We have $\edges^I_{s} / s^2 \to I \as$ as $s \to\infty$ by the law of large
numbers for Poisson processes.

Finally, we may view the loops as an independent marking of the latent
Poisson process, with a loop on $(t_i,x_i)$ included with probability
$W(x_i,x_i)$. The fact that $\lim_{s\to\infty}\loops(\KEG_s)/s = \int W(x,x)
\,\intd{x}$ then follows by the law of large numbers for Poisson processes.
\end{proof}

By the two previous lemmas, the limiting distribution of $\mathsf {Smpl}
({G_s},r /s)$ is generated by $\W$, and (temporarily simplifying to the case
$\|W\|_1 + 2\|S\|_1 + 2I = 1$) we have $s \approx\sqrt{2 \edges(G_s)}$ when
$s$ is large. Thus, to prove our main result we would like to couple
$\mathsf{Smpl}  ({G_s},r /s)$ and $\mathsf{Smpl} ({G_s},r/{\sqrt{2
\edges(G_s)}})$.

\begin{theorem}\label{graphex_proc_samp_conv}
Let $(\KEG_s)_{s\in\Reals_{+}}$ be a graphex process generated by an
integrable graphex $\W=(I,S,W)$ such that $\|\W\|_1 > 0$, and let $G_s =
\mathcal{G}  (\KEG_{s} ) $ for all $s \in\Reals_{+}$. Then
$(G_s)_{s\in\Reals_{+}}$ is sampling convergent to $\W'$, that is,
$\mathrm{SmplD}  (G_s,r ) \to\mathrm {GPD}  (\W',r ) $ weakly almost surely,
where $\W'=(I',S',W')$ is defined by
\begin{align*}
I' &= I/ \llVert \W \rrVert _1,\qquad
S'(x)= \bigl( \llVert \W \rrVert _1
\bigr)^{-1/2} S\bigl(x \llVert \W \rrVert _1^{1/2}
\bigr), \quad \text{and }
\\
W'(x,y) &= W\bigl(x \llVert \W \rrVert _1^{1/2},
y \llVert \W \rrVert _1^{1/2}\bigr).
\end{align*}
\end{theorem}

\begin{proof}
First, for any graph $G$ and any $q, p \in[0,1]$ such that $q < p$, there is
a coupling such that
\[
\Pr\bigl(\mathsf{Smpl} (G,p) \neq\mathsf{Smpl} (G ,q)\bigr) \le\bigl(2
p^2 \edges(G) + p \loops (G)\bigr) (1-q/p).
\]
Explicitly, we sample $\mathsf{Smpl} (G,p)$ as usual, and we sample
$\mathsf{Smpl} (G ,q)$ as\break
$\mathsf{Smpl} (\mathsf{Smpl} (G,p),q /p)$. Then
the expected number of vertices included in\break
$\mathsf{Smpl} (G,p)$ that are
not selected as candidates for $\mathsf{Smpl} (G ,q)$ is
\begin{align*}
\EE\bigl[\vertices\bigl(\mathsf{Smpl} (G,p)\bigr)\bigr](1-{q}/{p}) \leq{}&
\bigl(2\EE\bigl[\edges\bigl(\mathsf{Smpl} (G,p)\bigr)\bigr]
\\
&{} + \EE\bigl[\loops\bigl(\mathsf{Smpl} (G,p)\bigr)\bigr]\bigr) (1-{q}/{p})
\\
={}& \bigl(2 p^2 \edges(G) + p \loops(G)\bigr) (1-q/p),
\end{align*}
and the claimed inequality follows by Markov's inequality and the observation
that $\mathsf{Smpl} (G,p) = \mathsf {Smpl} (G ,q)$ if every vertex of
$\mathsf{Smpl} (G,p)$ is included as a candidate for $\mathsf{Smpl} (G ,q)$.

Let $c=\|\W\|_1^{-1/2}$. Under the above coupling,
\begin{eqnarray*}
&&\Pr\biggl(\mathsf{Smpl} \biggl( {G_s},\frac{r}{\sqrt{2 \edges
(G_s)}}\biggr)
\neq\mathsf{Smpl} \biggl({G_s}, \frac{r c}{s}\biggr)\biggr)
\\*
&&\qquad \le %
\begin{cases} \biggl(2 r^2c^2
\frac{\edges(G_s)}{s^2} + rc \frac{\loops
(G_s)}{s} \biggr) \biggl(1-\frac{s/c}{\sqrt{2\edges(G_s)}}
\biggr)
\\
\qquad \text{for } s/c < \sqrt{2 \edges(G_s)}, \text{ and}
\\
\biggl(r^2 + r \frac{\loops(G_s)}{\sqrt{2 \edges(G_s)}} \biggr) \biggl(1-
\frac{\sqrt{2\edges(G_s)}}{s/c} \biggr)
\\
\qquad \text{for } s/c \ge\sqrt{2 \edges(G_s)}. \end{cases}
\end{eqnarray*}
By Lemma~\ref{edge_scaling}, the right-hand side goes to $0$ almost surely as
$s\to\infty$. The theorem statement then follows by Lemma~\ref{r_s_samp_conv}
and \cite{Veitch:Roy:2016}, Lemma~5.2, which implies that $\mathrm {GPD}  (\W
,rc ) = \mathrm{GPD}  (\W^c,r ) $, where $\W^c = (c^2 I, c S(\cdot/ c),
W(\cdot/ c, \cdot/ c))$.
\end{proof}

\begin{cor}\label{every_graphex_lim}
For any integrable graphex $\W$ such that $\|\W\|_1 \le1$ there is some graph
sequence that is sampling convergent to $\W$.
\end{cor}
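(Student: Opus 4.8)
The plan is to split on $\alpha:=\|\W\|_{1}\in[0,1]$, building on \cref{graphex_proc_samp_conv}, on the scaling identity $\GPD{\cV}{rc}=\GPD{\cV^{c}}{r}$ for $\cV^{c}=(c^{2}I,cS(\cdot/c),W(\cdot/c,\cdot/c))$ (with $(\cV^{c_{1}})^{c_{2}}=\cV^{c_{1}c_{2}}$, immediate from the formula), and on the coupling estimate $\Pr(\samp[p]{G}\neq\samp[q]{G})\le 2pq\,\edges(G)\max(p/q-1,q/p-1)$ proved at the start of the proof of \cref{graphex_proc_samp_conv}. Throughout I use that $\samp[p]{A\sqcup B}\equaldist\samp[p]{A}\sqcup\samp[p]{B}$ with independent samplings, since $p$-sampling acts independently on the two parts of a disjoint union and there are no edges between them. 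When $\alpha=1$ the rescaling in \cref{graphex_proc_samp_conv} is trivial ($\W'=\W$): generating $(\KEG_{s})$ from $\W$ and setting $G_{j}=\CAM{\KEG_{j}}$, almost surely $\edges(G_{j})\to\infty$ (by \cref{edge_scaling}) and $\sampD{G_{j}}{r}\to\GPD{\W}{r}$ weakly for all $r$; fixing a realization in this probability-one event gives the desired deterministic sequence.

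For $0<\alpha<1$, put $\W_{A}:=\W^{1/\sqrt\alpha}$, an integrable graphex with $\|\W_{A}\|_{1}=\alpha^{-1}\|\W\|_{1}=1$. By the case $\alpha=1$ there is a deterministic sequence $A_{1},A_{2},\dots$ with $\edges(A_{j})\to\infty$ that is sampling convergent to $\W_{A}$. Let $B_{j}$ be a single star with $m_{j}:=\lceil\frac{1-\alpha}{\alpha}\edges(A_{j})\rceil$ rays and set $G_{j}:=A_{j}\sqcup B_{j}$, so $\edges(G_{j})\to\infty$ and $\edges(A_{j})/\edges(G_{j})\to\alpha$. With $p_{j}:=r/\sqrt{2\edges(G_{j})}$ and $q_{j}:=r\sqrt\alpha/\sqrt{2\edges(A_{j})}$ one has $p_{j}/q_{j}=\sqrt{\edges(A_{j})/(\alpha\,\edges(G_{j}))}\to1$ and $2p_{j}q_{j}\edges(A_{j})\to r^{2}\alpha$, so the coupling estimate gives $\Pr(\samp[p_{j}]{A_{j}}\neq\samp[q_{j}]{A_{j}})\to0$; since $\samp[q_{j}]{A_{j}}\to\GPD{\W_{A}}{r\sqrt\alpha}$ by sampling convergence of $A_{j}$ at parameter $r\sqrt\alpha$, we get $\samp[p_{j}]{A_{j}}\to\GPD{\W_{A}}{r\sqrt\alpha}$. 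Meanwhile $\samp[p_{j}]{B_{j}}$ is nonempty only when the star's centre is sampled, which has probability $p_{j}\to0$, so $\samp[p_{j}]{B_{j}}\to\emptyset$. By the disjoint-union identity and a Slutsky-type argument, $\sampD{G_{j}}{r}\to\GPD{\W_{A}}{r\sqrt\alpha}\sqcup\emptyset=\GPD{\W_{A}}{r\sqrt\alpha}=\GPD{\W_{A}^{\sqrt\alpha}}{r}=\GPD{\W}{r}$ (using $\W_{A}^{\sqrt\alpha}=\W$); as $r$ was arbitrary, $G_{j}$ is sampling convergent to $\W$.

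For $\alpha=0$ we have $\W=(0,0,W)$ with $\|W\|_{1}=0$, so \eqref{KEGgen} almost surely creates no non-loop edges and $\GPD{\W}{r}$ is the law of a graph consisting of $\poiDist(rc)$ isolated loop-vertices, where $c:=\int W(x,x)\intd x<\infty$. Take $G_{j}$ to be the disjoint union of a star with $m_{j}\to\infty$ rays and $\ell_{j}:=\lceil c\sqrt{2m_{j}}\,\rceil$ isolated loop-vertices, so $\edges(G_{j})=m_{j}\to\infty$ and $p_{j}=r/\sqrt{2m_{j}}$; the star part vanishes exactly as above, while each loop-vertex is never isolated and so survives $p_{j}$-sampling independently with probability $p_{j}$, giving a $\binDist(\ell_{j},p_{j})$ number of surviving loop-vertices, which converges in distribution to $\poiDist(rc)$ since $\ell_{j}p_{j}\to rc$. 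Hence $\sampD{G_{j}}{r}\to\GPD{\W}{r}$ for all $r$, completing this case.

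The main obstacle is the case $0<\alpha<1$. A graphex-process sequence is uniformly sampling regular and hence, by \cref{lim_is_det_int_graphex}, has a sampling limit of unit $L^{1}$-norm, so a limit of norm strictly below $1$ can only arise from a non-uniformly-sampling-regular sequence. The device that makes this work is the single large star: it carries $\Theta(\edges(G_{j}))$ edges---which is precisely what dilutes $p_{j}$ by the factor $\sqrt\alpha$ needed to turn sampling of $A_{j}$ at rate $p_{j}$ into sampling at rate $\approx q_{j}$---yet tends to the empty graph in the limit. The delicate step is verifying that inserting $B_{j}$ does not disturb the sampling convergence of the $A_{j}$ component, which is where the coupling estimate and the ratio bound $p_{j}/q_{j}\to1$ enter.
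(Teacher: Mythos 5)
Your proof is correct and follows essentially the same route as the paper's: both settle $\|\W\|_1=1$ via \cref{graphex_proc_samp_conv}, and both realize norms in $(0,1)$ by padding a unit-norm core with a large star whose $\Theta(\edges(G_j))$ edges dilute the sampling probability by the right factor while the star itself vanishes in the limit. The only differences are organizational---you first rescale $\W$ to unit norm and run the dilution argument on an abstract deterministic sampling-convergent sequence rather than on the graphex-process realization with its latent times, and you handle the pure-diagonal case by an explicit binomial-to-Poisson computation on deterministic loop-vertices instead of another graphex-process realization---but the decomposition by $\|\W\|_1$, the coupling estimate, and the star device are the same.
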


\begin{proof}
Suppose $\|\W\|_1 = 1$, and let $s_1, s_2, \dots$ be some sequence such that
$s_j \upto\infty$ as $j \to\infty$ and let $G_j = \mathcal{G}
 (\KEG _{s_j} ) $, where
$\KEG$ is generated by $\W$; the sequence $G_1, G_2, \dots$ is almost surely
sampling convergent to $\W$ by Theorem~\ref{graphex_proc_samp_conv}.

Next, suppose that $0<\|\W\|_1<1$, and as above, let $G_j = \mathcal{G}
(\KEG_{s_j} ) $, with $\KEG$ generated by $\W$, and let $S_1, S_2, \dots$ be
a sequence of stars such that $\edges(S_j) \to\infty$ as $j\to \infty$ and
$\lim_{j\to\infty}{\edges(G_j)}/(\edges(G_j)+\edges(S_j)) = \frac{1}2
\|\W\|_1$. Under the obvious coupling,
\[
\lim_{j\to\infty} \mathsf{Smpl} \biggl({G_j \cup
S_j}, \frac{r}{\sqrt
{\edges(G_j \cup S_j)}}\biggr) \equalas\lim_{j\to\infty}
\mathsf{Smpl} \biggl( {G_j},\frac
{r}{\sqrt{\edges(G_j \cup S_j)}}\biggr),
\]
because the probability of seeing even a single edge sampled from $S_j$ is
bounded by the probability of selecting the center of the star as a candidate
vertex, which tends to $0$. By Lemma~\ref{edge_scaling},
$\edges(G_j)/s_j^2\to\frac{1}2 \|W\|_1$ a.s. as $j\to\infty$, implying that
$\edges(G_j\cup S_j)/s_j^2\to1$ a.s. as $j\to\infty$. By essentially the same
coupling argument used in the proof of Theorem~\ref{graphex_proc_samp_conv},
$ \mathrm{SmplD}  (G_j \cup S_j,r ) \to\mathrm{GPD} (\W,r ) $ weakly as $j
\to\infty$, showing that $G_1 \cup S_1, G_2 \cup S_2, \dots$ is sampling
convergent to~$\W$.\looseness=1

Next, consider a sequence $G_1, G_2, \dots$ generated by a graphon $W$ that
is $0$ except on the diagonal, and take $\edges(S_j) = \lceil
 (\loops(G_j)/\int W(x,x)\,\intd{x} )^2 \rceil$. By
Lemma~\ref{edge_scaling} and the fact that $\edges(G_j)=0$ a.s., we see that
$\edges(G_j\cup S_j)/s_j^2\to1$ a.s., showing that $G_1 \cup S_1, G_2 \cup
S_2, \dots$ is sampling convergent to $(0,0,W)$.\vadjust{\goodbreak}

Finally, the sampling limit of $S_1, S_2, \dots$ with $\edges(S_j) = j$ is
$(0,0,0)$, completing the proof.
\end{proof}

\section{Graphon metrics and sampling distributions}\label
{sec:met_and_samp_dist}

In this section, we relate sampling convergence to the metric convergence of
\cite{Borgs:Chayes:Cohn:Holden:2016}. Intuitively, the basic idea is to show
that if $\delta_1(W_1,W_2)$ or $\delta_\square(W_1,W_2)$ is small then we can
construct a coupling of $\mathrm{GPD}  (W_1,r ) $ and $\mathrm{GPD}
 (W_2,r ) $ such that $\Pr(G^1_r
\neq G^2_r)$ is also small, where $G^k_r  \sim \mathrm{GPD} (W_k,r ) $
marginally. Note that we require the diagonals to be $0$ throughout because
the graphon metrics do not control distance between diagonals. Similarly, we
assume the graphons are integrable, since otherwise the metrics are not
defined.

\begin{lem}\label{small_one_close_dist}
Let $W$ and $W'$ be integrable graphons with vanishing diagonals, and let
$H^{(1)}_r\sim \mathrm{GPD}  (W_1,r ) $ and $H^{(2)}_r\sim\mathrm{GPD}
 (W_2,r ) $. Then there is a
coupling of $H^{(1)}_r$ and $H^{(2)}_r$ such that under this coupling
\[
\Pr\bigl(H^{(1)}_r \neq H^{(2)}_r
\bigr) \le\frac{1}2 r^2 \delta_1(W_1,W_2).
\]
\end{lem}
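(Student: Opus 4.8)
The plan is to couple the two graphex processes by driving both of them with a common source of randomness, after first aligning the two graphons via near-optimal measure-preserving transformations and then passing to a total-variation-optimal coupling to absorb the resulting slack.

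First I would record the standard fact that $\GPD{W}{r}$ — which here means $\GPD{(0,0,W)}{r}$ — depends on $W$ only through its orbit under measure-preserving maps: if $\psi\colon\NNReals\to\NNReals$ is measure preserving then $\GPD{W^\psi}{r}=\GPD{W}{r}$, since pushing the latent unit-rate Poisson process on $[0,r]\times\NNReals$ forward through $\mathrm{id}\otimes\psi$ gives again a unit-rate Poisson process (mapping theorem, using that $\psi_*\Lebesgue=\Lebesgue$ is non-atomic) while the edge probabilities transform as $W^\psi(\vartheta_i,\vartheta_j)=W(\psi(\vartheta_i),\psi(\vartheta_j))$. Given $\epsilon>0$, pick measure-preserving $\psi_1,\psi_2$ with $\|W_1^{\psi_1}-W_2^{\psi_2}\|_1\le\done(W_1,W_2)+\epsilon$; after replacing $W_k$ by $W_k^{\psi_k}$ (which still has vanishing diagonal) we may assume $\|W_1-W_2\|_1\le\done(W_1,W_2)+\epsilon$.

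Now the coupling: generate both $\KEG^{(1)}_r$ and $\KEG^{(2)}_r$ from the \emph{same} unit-rate Poisson process $\Pi=\{(\theta_i,\vartheta_i)\}$ on $[0,r]\times\NNReals$ and the \emph{same} i.i.d.\ uniforms $\{\zeta_{\{i,j\}}\}$, including edge $\{i,j\}$ in $\KEG^{(k)}_r$ precisely when $\zeta_{\{i,j\}}\le W_k(\vartheta_i,\vartheta_j)$ (vanishing diagonals mean there are no loops), then deleting isolated vertices and dropping labels to get $H^{(k)}_r$. Call a pair $\{i,j\}$ \emph{discordant} if the edge $\{i,j\}$ is present in exactly one of $\KEG^{(1)}_r,\KEG^{(2)}_r$; conditionally on $\Pi$ this occurs with probability $\abs{W_1(\vartheta_i,\vartheta_j)-W_2(\vartheta_i,\vartheta_j)}$, since $\zeta_{\{i,j\}}$ must then land in an interval of that length. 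If there is no discordant pair then $\KEG^{(1)}_r=\KEG^{(2)}_r$ as labeled graphs, hence $H^{(1)}_r=H^{(2)}_r$. Writing $N$ for the number of discordant pairs, Campbell's theorem applied to the second factorial moment measure of $\Pi$, whose intensity is $\Lebesgue|_{[0,r]}\otimes\Lebesgue$, gives $\EE[N]=\tfrac12\int_{[0,r]^2}\!\int_{\NNReals^2}\abs{W_1(x,y)-W_2(x,y)}\intd x\intd y\intd\theta\intd\theta'=\tfrac12 r^2\|W_1-W_2\|_1$, and Markov's inequality yields $\Pr(H^{(1)}_r\neq H^{(2)}_r)\le\Pr(N\ge1)\le\tfrac12 r^2(\done(W_1,W_2)+\epsilon)$.

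Since this explicit construction works for every $\epsilon>0$, we conclude $\dtv{\GPD{W_1}{r}-\GPD{W_2}{r}}\le\tfrac12 r^2\done(W_1,W_2)$, and since the set of finite unlabeled graphs is Polish (indeed countable discrete) there is a coupling of $H^{(1)}_r$ and $H^{(2)}_r$ attaining the total-variation distance, which is the coupling asserted by the lemma. I expect the only genuinely delicate point to be this last step: the infimum defining $\done$ need not be attained, so one cannot simply substitute an ``optimal'' pair $(\psi_1,\psi_2)$, and one must instead route through the total-variation distance and invoke the existence of a total-variation-optimal coupling on a Polish space. The measure-preserving invariance and the Campbell computation are routine.
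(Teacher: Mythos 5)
Your proposal is correct and follows essentially the same route as the paper: a common latent Poisson process and a common array of uniforms drive both graphex processes, the expected number of discordant pairs is computed to be $\tfrac{r^2}{2}\|W_1-W_2\|_1$ (Slivnyak--Mecke in the paper, the second factorial moment measure in your write-up), and Markov's inequality finishes the bound. Your extra step of taking $\epsilon$-near-optimal measure-preserving maps and then passing to a total-variation-optimal coupling is a slightly more careful treatment of the possibly non-attained infimum in $\done$ than the paper's, which simply writes a minimum over transformations, but it is the same argument in substance.
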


\begin{proof}
We couple $H^{(1)}_r$ and $H^{(2)}_r$ according to the following generative
scheme:
\begin{enumerate}
\item[1.] Draw $\PP \sim \PPDist([0,r) \times\Reals_{+}, \lambda \otimes
    \lambda)$.
\item[2.] Draw U-array $\{U_{ij}\}$.
\item[3.] Include edge $(t_i,t_j)$ in graph $H^{(k)}_r$ if and only if
    $W_k(x_i,x_j) > U_{ij}$.
\item[4.] Drop the labels of the graphs.
\end{enumerate}
That is, we generate both graphon processes using the same latent Poisson
process and U-array. Marginally, this is just the standard graphon process
scheme and so the coupling is obviously valid.

Under this coupling, for each pair of points $(t_i,x_i)$ and $(t_j,x_j)$ in
$\PP$ the probability, conditional on $\PP$, that $(t_i,t_j)$ is an edge in
one graph and not an edge in the other is $|W_1(x_i,x_j)-W_2(x_i,x_j)|$. The
expected number of edges that disagree between the two graphs is then
\[
\frac{1}{2} \EE\biggl[\sum_{x_i,x_j \in\PP
} \bigl\llvert
W_1(x_i,x_j)-W_2(x_i,x_j)
\bigr\rrvert \biggr]=\frac{r^2}{2} \llVert W_1-W_2
\rrVert _1,
\]
where the expectation is computed by an application of the Slivnyak--Mecke
theorem.

The graphs are equal if there are no edges that disagree, so Markov's
inequality then gives
\[
\Pr\bigl(H^{(1)}_r \neq H^{(2)}_r
\bigr) \le\frac{r^2}{2} \llVert W_1-W_2 \rrVert
_1.
\]
For any measure-preserving transformation $\phi$ of $\Reals_{+}$,
$\mathrm{GPD}  (W \circ (\phi\otimes\phi),r ) = \break \mathrm{GPD}  (W,r ) $. It
then follows that
\[
\Pr\bigl(H^{(1)}_r \neq H^{(2)}_r
\bigr) \le\min_{\phi_1,\phi_2}\frac
{r^2}{2} \bigl\llVert
W_1 \circ(\phi_1 \otimes\phi_1) -
W_2 \circ(\phi_2 \otimes\phi_2) \bigr\rrVert
_1,
\]
where the minimization is over all pairs of measure-preserving
transformations.
\end{proof}

To show that convergence in stretched cut distance implies convergence of the
laws of the graphs generated by the graphons, we will need a translation of
the corresponding result (\cite{Borgs:Chayes:Lovasz:Sos:Vesztergombi:2008},
Theorem~3.7a) from the theory of dense graph convergence.

\begin{lem}\label{compact_cut_conv_to_left_conv}
Let $W_1, W_2, \dots$ be a sequence of integrable graphons with vanishing
diagonals. Suppose that there is some compact set $C$ such that $\supp(W_j)
\subseteq C$ for all $j\in\Nats$. If $\lim_{j\to\infty}\delta_\square(W_j,W)
= 0$ for some graphon $\W$, then there is a sequence of couplings of
$\mathrm{GPD} (W_j,r ) $ and $\mathrm{GPD}  (W,r ) $ such that, for
$H^{(j)}_r$ and $H_r$ distributed according to $\mathrm{GPD}  (W_j,r ) $ and
$\mathrm{GPD} (W,r )$, respectively,
\[
\lim_{j\to\infty}\Pr\bigl(H^{(j)}_r \neq
H_r\bigr) = 0 \qquad \as
\]
\end{lem}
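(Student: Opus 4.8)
The coupling that proved Lemma~\ref{small_one_close_dist} — generating both graphon processes from a common latent Poisson process and a common $U$-array — is useless here, since it only controls the invariant $L^1$ distance $\done$, and $\done$ is not dominated by $\dcut$. The plan is instead to reduce to the classical dense setting, where cut convergence is known to imply convergence of subgraph densities (left convergence), and then to exploit the fact that distributional convergence of graph-valued random variables taking values in a fixed finite set can always be realized by a coupling that agrees with overwhelming probability.

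\textbf{Reduction to a bounded feature space.} Since $\GPD{\cdot}{r}$ is invariant under measure-preserving transformations of $\NNReals$ and $\dcut$ is defined modulo such transformations, I would first arrange that $C=[0,M]^2$ and that $W$ and every $W_j$ is supported in $[0,M]^2$; rescaling the feature coordinate by $M^{-1}$ then produces graphons $W',W_1',W_2',\dots\colon[0,1]^2\to[0,1]$ with $\dcut(W_j',W')\to0$ in the classical probability-space cut distance (this is exactly the promised translation of the dense theory). Because $W_j$ vanishes off $[0,M]^2$, in the standard generative scheme for $\GPD{W_j}{r}$ every Poisson point whose feature exceeds $M$ is isolated, hence discarded; so $H^{(j)}_r$ has the same law as the unlabeled graph obtained from $\mathbb G(N;W_j')$ by deleting isolated vertices and forgetting labels, where $N\sim\poiDist(rM)$ is independent of everything else and $\mathbb G(n;W')$ denotes the usual random graph on $n$ labeled vertices with i.i.d.\ uniform features and $W'$-edges. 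The identical description holds for $H_r$ with $W'$ in place of $W_j'$, using the same count $N$.

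\textbf{From cut convergence to a coupling.} By the translation of \citep[][Theorem~3.7a]{Borgs:Chayes:Lovasz:Sos:Vesztergombi:2008} (equivalently, the counting lemma $|t(F,U_1)-t(F,U_2)|\le e(F)\,\dcut(U_1,U_2)$), $\dcut(W_j',W')\to0$ forces $t(F,W_j')\to t(F,W')$ for every finite graph $F$; hence, for each fixed $n$, $\mathbb G(n;W_j')\convDist\mathbb G(n;W')$ as $j\to\infty$, viewed as random elements of the finite set of graphs on $n$ labeled vertices. Consequently the maximal coupling gives, for each $n$, a coupling with $\Pr(\mathbb G(n;W_j')\neq\mathbb G(n;W'))=\dtv{\mathcal L(\mathbb G(n;W_j'))-\mathcal L(\mathbb G(n;W'))}\to0$ as $j\to\infty$. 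I would then couple $H^{(j)}_r$ and $H_r$ by drawing $N\sim\poiDist(rM)$ once and, conditionally on $N=n$, using the maximal coupling of $\mathbb G(n;W_j')$ and $\mathbb G(n;W')$, and finally deleting isolated vertices and forgetting labels in each coordinate. When the two labeled graphs coincide so do the resulting unlabeled graphs, whence $\Pr(H^{(j)}_r\neq H_r)\le\sum_{n\ge0}\Pr(N=n)\,\dtv{\mathcal L(\mathbb G(n;W_j'))-\mathcal L(\mathbb G(n;W'))}$.

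\textbf{Taking the limit, and the main obstacle.} Each summand tends to $0$ as $j\to\infty$ and is bounded by the summable sequence $\Pr(N=n)$, so dominated convergence yields $\Pr(H^{(j)}_r\neq H_r)\to0$; if the graphons are random one runs the whole argument conditionally on them, which is what the ``$\as$'' in the statement records. The one genuinely non-routine step is recognizing that the $L^1$-flavoured coupling of Lemma~\ref{small_one_close_dist} cannot be used, so that one must pass through left convergence together with the elementary-but-essential fact that distributional convergence of finite-valued random variables can always be upgraded to a high-probability coupling; everything else (the reduction to a bounded feature box via the compact-support hypothesis, the appearance of the Poisson count $N$, and the dominated-convergence interchange) is bookkeeping. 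I also expect the translation of the dense counting lemma to the $\NNReals^2$ setting — keeping track of how measure-preserving transformations of $\NNReals$ restrict to the relevant finite-measure sets — to require some care, though it is standard.
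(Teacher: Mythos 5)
Your proposal is correct and follows essentially the same route as the paper: reduce to a compact feature box, generate both processes with a common Poisson vertex count $N$, invoke \citep[Theorem~3.7a]{Borgs:Chayes:Lovasz:Sos:Vesztergombi:2008} conditionally on $N=n$ to get distributional convergence, and upgrade to a coupling via total variation on the discrete space of finite graphs. The only cosmetic differences are that you make the maximal coupling and the dominated-convergence sum over $n$ explicit, where the paper simply notes that weak convergence on a discrete space implies total variation convergence and hence the existence of the couplings.
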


\begin{proof}
Because $C$ is compact, $C \subseteq[0,c]^2$ for some $c\in\Reals_{+} $. We
only require a $C$ such that $\supp(W_j) \subseteq C$, and hence we may
assume without loss of generality that $C = [0,c]^2$.

The first ingredient of the coupling is the observation that a sample from\break
$\mathrm{GPD}  (W,r ) $ may be generated according to the following scheme:
\begin{enumerate}
\item[1.] Sample $N_r  \sim \poiDist(cr)$.
\item[2.] For $i=1,\dots ,N_r$ sample features $x_k \distiid U[0,c]$.
\item[3.] Include each edge $(k,l)$ independently with probability
    $W(x_k,x_l)$.
\item[4.] Drop the labels in $[N_r]$ and return the edge set.
\end{enumerate}
That is, in the compactly supported graphon case, the edges are sampled
independently conditional on the number of candidate vertices. This is
essentially the same generative model as is used in the dense graph theory,
with the distinction that the number of vertices is now random and that
vertices that do not connect to any edges are not included in the graph. Our
aim is to build a sequence of couplings that exploits this observation along
with the equivalence of left convergence and cut convergence in the dense
graph setting.

Using a common $N_r$ for sampling from each $W_j$ allows us to use results
from the dense graph setting.
\cite{Borgs:Chayes:Lovasz:Sos:Vesztergombi:2008}, Theorem~3.7a, shows that if
$\delta_\square(W_j,W) \to0$ as $j \to\infty$, then for each fixed graph $F$,
\[
\lim_{j\to\infty} \bigl\lvert\Pr\bigl(H^{(j)}_r
= F \given N_r\bigr) - \Pr (H_r = F \given
N_r) \bigr\rvert = 0.
\]
It is immediate that the limit is also $0$ unconditionally; that is,
\[
\mathrm{GPD} (W_j,r ) \to\mathrm{GPD} (W,r )
\]
weakly as $j \to\infty$. Since the space of graphs is discrete, weak
convergence also implies convergence in total variation. This implies the
existence of the sequence of couplings in the lemma statement.
\end{proof}

The next result extends this to the case of arbitrary cut convergent graphon
sequences. The same result has recently been independently proved as
\cite{Janson:2017}, Theorem~3.4.

\begin{lem}\label{cut_implies_sampling_lemma}
Let $W_1, W_2, \dots$ be a sequence of integrable graphons with vanishing
diagonals such that $\delta_\square(W_j,W) \to0 \as$ as $j \to\infty$ for
some integrable graphon $W$ with vanishing diagonal. Then there is a sequence
of couplings such that, given $H^{(j)}_r$ and $H_r$ distributed according to
$\mathrm{GPD}  (W_j,r ) $ and $\mathrm{GPD}
 (W,r )$, respectively,
\[
\lim_{j\to\infty}\Pr\bigl(H^{(j)}_r \neq
H_r\bigr) = 0.
\]
\end{lem}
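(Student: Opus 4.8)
The plan is to prove the equivalent statement that $\GPD{W_j}{r}\to\GPD{W}{r}$ in total variation for every $r\in\NNReals$; since the set of finite unlabelled graphs is a countable, hence Polish, discrete space, taking the optimal coupling of $\GPD{W_j}{r}$ and $\GPD{W}{r}$ for each $j$ then produces couplings with $\Pr(H^{(j)}_r\neq H_r)=\dtv{\GPD{W_j}{r}-\GPD{W}{r}}\to 0$, as required. The naive approach would be to truncate $W_j$ and $W$ to the sets of (approximately) finite measure carrying their mass and compare the resulting compactly supported graphons via \cref{compact_cut_conv_to_left_conv}; this fails because truncating a cut convergent sequence need not preserve cut convergence (the carrying windows can slide), and because there is no modulus of continuity for $W\mapsto\GPD{W}{r}$ in the cut metric that is uniform in the window size. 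The remedy is to first \emph{align} $W_j$ to $W$ by measure preserving transformations that nearly realise $\dcut(W_j,W)$, and only then truncate both to one fixed window.

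Concretely: fix $r$ and $\epsilon>0$. Using integrability of $W$, pick $M$ with $\|W-W\1_{[0,M)^2}\|_1<\epsilon$. For each $j$ pick measure preserving $\alpha_j,\beta_j$ with $\|W_j^{\alpha_j}-W^{\beta_j}\|_\square\le\dcut(W_j,W)+1/j=:\eta_j\to 0$, and then a measure preserving \emph{bijection} $\rho_j$ of $\NNReals$ with $\rho_j([0,M))=\beta_j^{-1}([0,M))$ (such $\rho_j$ exists since both sets have measure $M$ and both complements have infinite measure). Put $\phi_j=\alpha_j\circ\rho_j$ and $\psi_j=\beta_j\circ\rho_j$, so that $\psi_j^{-1}([0,M))=[0,M)$, and define the compactly supported, vanishing-diagonal graphons $A_j=(W_j^{\phi_j})\1_{[0,M)^2}$ and $B_j=(W^{\psi_j})\1_{[0,M)^2}$, both living in $[0,M]^2$. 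Because $\psi_j^{-1}([0,M))=[0,M)$ one has $B_j=(W\1_{[0,M)^2})^{\psi_j}$, so $\|W^{\psi_j}-B_j\|_1=\|W-W\1_{[0,M)^2}\|_1<\epsilon$; and since $W_j^{\phi_j}$ agrees with $W^{\psi_j}$ up to cut norm $\eta_j$ while $W^{\psi_j}$ has $L^1$ mass $<\epsilon$ outside $[0,M)^2$, bounding the integral of $W_j^{\phi_j}$ over each of the three product pieces of $\NNReals^2\setminus[0,M)^2$ by the corresponding integral of $W^{\psi_j}$ plus $\|W_j^{\phi_j}-W^{\psi_j}\|_\square$ gives $\|W_j^{\phi_j}-A_j\|_1\le 3\eta_j+3\epsilon$. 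Since $\GPD{\cdot}{r}$ is unchanged by a measure preserving transformation of $\NNReals$, we have $\GPD{W_j^{\phi_j}}{r}=\GPD{W_j}{r}$ and $\GPD{W^{\psi_j}}{r}=\GPD{W}{r}$, so \cref{small_one_close_dist} (with $\done\le\|\cdot\|_1$) yields $\dtv{\GPD{W_j}{r}-\GPD{A_j}{r}}\le\tfrac{r^2}{2}(3\eta_j+3\epsilon)$ and $\dtv{\GPD{W}{r}-\GPD{B_j}{r}}\le\tfrac{r^2}{2}\epsilon$.

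It remains to control $\dtv{\GPD{A_j}{r}-\GPD{B_j}{r}}$, and here the payoff of aligning first is that $A_j$ and $B_j$ stay close while sitting in a common compact set: $\|A_j-B_j\|_\square\le\|W_j^{\phi_j}-W^{\psi_j}\|_\square=\|W_j^{\alpha_j}-W^{\beta_j}\|_\square\le\eta_j\to 0$ (the cut norm is unchanged by the bijection $\rho_j$ and does not grow under truncation to a product set), and all $A_j,B_j$ are supported in $[0,M]^2$. A routine subsequence argument then gives $\dtv{\GPD{A_j}{r}-\GPD{B_j}{r}}\to 0$: along any subsequence, the compactness of graphons supported in the fixed compact $[0,M]^2$ under $\dcut$ (which trivially has uniformly regular tails) lets one pass to a further subsequence along which $A_j\to A_\infty$ in cut metric; since $\dcut(A_j,B_j)\to 0$ the $B_j$ tend to the same $A_\infty$; and \cref{compact_cut_conv_to_left_conv}, applied to each of these two compactly supported cut convergent sequences, gives $\GPD{A_j}{r}\to\GPD{A_\infty}{r}$ and $\GPD{B_j}{r}\to\GPD{A_\infty}{r}$ weakly, hence in total variation on the discrete graph space. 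Assembling the three estimates and letting $j\to\infty$ gives $\limsup_j\dtv{\GPD{W_j}{r}-\GPD{W}{r}}\le 2r^2\epsilon$; as $\epsilon$ was arbitrary this is $0$, which finishes the proof.

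The step I expect to be delicate is exactly the one just above the conclusion: making the two truncated cores simultaneously cut close and supported in one fixed compact set, while still being close in the \emph{law} of the generated graph to $W_j$ and $W$ respectively. Cut closeness does not give $L^1$ closeness, so the truncation error of $W_j$ cannot be bounded directly; it must be bounded indirectly through the cut norm estimate, which borrows the $L^1$ smallness of $W$'s own truncation error. Everything else is bookkeeping with measure preserving transformations.
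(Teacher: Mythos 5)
Your proof is correct and follows essentially the same route as the paper's: truncate to a compact window chosen from the integrability of $W$, bound the $L^1$ truncation error of $W_j$ indirectly through the cut norm together with the truncation error of $W$, control the tails via \cref{small_one_close_dist}, and couple the compact cores via \cref{compact_cut_conv_to_left_conv}. The only differences are cosmetic: the paper assumes without loss of generality a one-sided alignment $\|W_j-W\|_\square\to 0$ where you keep both maps and realign with the bijection $\rho_j$ (arguably more careful), and your closing subsequence/compactness step is unnecessary, since $\GPD{B_j}{r}=\GPD{W\1_{[0,M)^2}}{r}$ for every $j$ and $\dcut(A_j,W\1_{[0,M)^2})\le \|A_j-B_j\|_\square\to 0$, so \cref{compact_cut_conv_to_left_conv} applies directly with the fixed limit $W\1_{[0,M)^2}$.
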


\begin{proof}
If the sequence is compactly supported then the result follows from
Lemma~\ref{compact_cut_conv_to_left_conv}, so assume otherwise.

It suffices to show that for all $\epsilon>0$ there is a sequence of
couplings (indexed by $j$) such that there is some $j'$ such that for all
$j>j'$,
\[
\Pr\bigl(H^{(j)}_r \neq H_r \given
W_j\bigr) \le\epsilon.
\]

The basic structure of our couplings is to pick out compactly supported
``dense cores'' of $W$ and $W_j$ such that, with high probability, every edge
of $H^{(j)}_r$ and $H_r$ is due to the dense cores, and then couple these
cores by Lemma~\ref{compact_cut_conv_to_left_conv}. We control the error
introduced by restricting to the dense cores by
Lemma~\ref{small_one_close_dist}.

Because $\delta_\square(W_j,W) \to0 \as$ as $j \to\infty$, we can find a
sequence of measure-preserving maps $\phi_j\colon\R_+\to\R_+$ such that
$\|W^{\phi_j}_j-W\|_\square\to0$. Replacing $W_j$ by $W_j^{\phi _j}$, we may
therefore assume without loss of generality that $\|W_j-W\|_\square\to0$.
Since $W$ is integrable, we can find a constant $M_{r,\epsilon}$ such that
$\|W-W1_{[0,M_{r,\epsilon}]^2}\|_1\leq\eps r^{-2}$. Next, we observe that
\begin{align*}
& \llVert W_j-W_j1_{[0,M_{r,\epsilon}]^2} \rrVert
_1
\\
&\qquad =
\int(W_j-W_j1_{[0,M_{r,\epsilon}]^2})
\\
&\qquad =
\int(W-W1_{[0,M_{r,\epsilon}]^2}) +
\int(W_j-W) +
\int (W_j-W)1_{[0,M_{r,\epsilon}]^2}
\\
&\qquad \leq \llVert W-W1_{[0,M_{r,\epsilon}]^2} \rrVert +2 \llVert W_j-W
\rrVert _\square,
\end{align*}
showing that for $j$ large enough,
$\|W_j-W_j1_{[0,M_{r,\epsilon}]^2}\|_1\leq\eps r^{-2}/2$.\vadjust{\goodbreak}

We will construct a series of couplings of $H^{(j)}_r $ and $H_r$ by first
coupling $G^{(j)}_r  \sim \mathrm{GPD} (W_j1_{[0,M_{r,\epsilon}]^2},r ) $ and
$G_r
 \sim \mathrm{GPD}  (W1_{[0,M_{r,\epsilon}]^2},r ) $ in
such a way that $\Pr(G^{(j)}_r \neq G_r) \le\epsilon/ 4$ for all sufficiently
large $j$. To see that such couplings exists, we first note that if we define
$\wt W_j=W_j1_{[0,M_{r,\epsilon}]^2}$ and $\wt W=W1_{[0,M_{r,\epsilon }]^2}$,
then $\|\wt W_j-\wt W\|_\square\leq\|W-W_j\|_\square\to0$ as $j\to \infty$.
We can therefore use Lemma~\ref{compact_cut_conv_to_left_conv} to get a
sequence of couplings of $G_r$ and $G^{(j)}_r$ such that for $j$ sufficiently
large, $\Pr(G^{(j)}_r \neq G_r) \le\epsilon/ 4$.

We now observe that given $G^{(j)}_r$, we may sample $H^{(j)}_r$ according
the following scheme:
\begin{enumerate}
\item[1.] Let $(\PP, G_r(W_j1_{U_{r,\epsilon} \times U_{r,\epsilon}}))$ be
    the tuple of the latent point process used to generate a graph, and the
    graph generated by $W_j1_{[0,M_{r,\epsilon}]^2}$ using $\PP$. Draw
    $\PP\given G^{(j)}_r  \sim \Pr(\PP,G_r(W_j1_{[0,M_{r,\epsilon}]^2}))
    \in\cdot \given G_r(W_j1_{[0,M_{r,\epsilon}]^2}) = G^{(j)}_r)$.
\item[2.] Generate a graph $E^{(j)}_r$ according to $W_j1_{\Reals_{+}^2
    \exclude ([0,M_{e,\eps}]^2}$ using $\PP$.
\item[3.] Return the edge set of the graph union of $E^{(j)}_r$ and
    $G^{(j)}_r$ (taking the common vertex set to be $\PP$, and dropping the
    labels).
\end{enumerate}
We define $E_r$ corresponding to $W$ in the obvious way.

Notice that, by construction, the joint distribution of $(\PP, G^{(j)}_r)$ is
the same as the distribution given by drawing $\PP$ as a unit rate Poisson
process and then generating $G^{(j)}_r$ according to $W_j1_{[0,M_\eps]^2}$
using $\PP$. This makes it clear that the sampling scheme reproduces the
distribution given by the Kallenberg representation construction, that is,
$H^{(j)}_r  \sim \mathrm{GPD}  (W_j,r ) $. Also note that $E^{(j)}_r \sim
\mathrm{GPD}  (W_j1_{\Reals_{+}^2 \exclude[0,M_\eps ]^2},r ) $, and $E_r
\sim \mathrm{GPD}  (W1_{\Reals_{+}^2 \exclude[0,M_\eps]^2},r ) $
(marginalizing $G^{(j)}_r$ and $G_r$).

The point of this sampling scheme is that now a coupling of $G^{(j)}_r$ and
$G_r$ immediately lifts to a coupling of $H^{(j)}_r$ and $H_r$ such that
%
\begin{align}
\Pr\bigl(H^{(j)}_r \neq H_r\bigr) &\le\Pr
\bigl(G^{(j)}_r \neq G_r \text{ or } \edges
\bigl(E^{(j)}_r\bigr) > 0 \text { or } \edges(E_r)
> 0 \bigr)
\nonumber
\\
&\le\Pr\bigl(G^{(j)}_r \neq G_r\bigr) + \Pr
\bigl(\edges\bigl(E^{(j)}_r\bigr)>0\bigr) + \Pr \bigl(
\edges(E_r)>0\bigr)
\\
&\le{\eps/4 + \Pr\bigl(\edges\bigl(E^{(j)}_r\bigr)>0\bigr)
+ \Pr\bigl(\edges(E_r)>0\bigr)}. \label{cut_coup_bound}
\end{align}

By Lemma~\ref{small_one_close_dist}, the last two terms of
\eqref{cut_coup_bound} are each at most $\epsilon/2$ and $\eps/4$,
respectively, proving the claim.
\end{proof}

We now turn from the convergence of graphons to convergence of graphs.

\begin{lem}\label{w_wo_pp_equiv}
Let $G_1, G_2, \dots$ be a sequence of graphs such that $\edges(G_j) \to
\infty$ as $j \to\infty$. The following are equivalent:
\begin{enumerate}
\item[1.] The sequence is sampling convergent to $\W$.
\item[2.] The graphon process corresponding to the stretched empirical
    graphon converges to $\W$, in the sense that, for all $r\in\Reals_{+}$,
    $\mathrm{GPD}  (W^{G_j,s},r ) \to\mathrm{GPD}  (\W ,r ) $ weakly as $j
    \to\infty$.
\end{enumerate}
\end{lem}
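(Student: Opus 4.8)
The plan is to deduce both implications from \cref{samp_equiv_poi_wr} (the asymptotic equivalence of with- and without-replacement sampling) together with the identity $\GPD{W^{G_j,s}}{r}=\Pr(\sampWR[{r/\sqrt{2\edges(G_j)}}]{G_j}\in\cdot\mid G_j)$ recorded above, which says that a {\timeword}-$r$ graph drawn from the stretched canonical graphon $W^{G_j,s}=\strGraphon{G_j}$ is precisely a with-replacement $r/\sqrt{2\edges(G_j)}$-sampling of $G_j$. Granting this identity, condition~(2) is exactly the assertion that $\sampWR[{r/\sqrt{2\edges(G_j)}}]{G_j}\convDist H_r$ for all $r\in\NNReals$, where $H_r\dist\GPD{\W}{r}$, while condition~(1) is the assertion that $\samp[{r/\sqrt{2\edges(G_j)}}]{G_j}\convDist H_r$ for all $r$. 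By \cref{samp_equiv_poi_wr} these two assertions are equivalent once $\edges(G_j)\to\infty$ (which is assumed) and $\loops(G_j)=O(\sqrt{\edges(G_j)})$, so the only real task is to check this last bound in each direction.

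If (1) holds, this is the observation used at the start of the proof of \cref{lim_is_graphex}: the number of loops of $\samp[{r/\sqrt{2\edges(G_j)}}]{G_j}$ is $\binDist(\loops(G_j),r/\sqrt{2\edges(G_j)})$, so if $\loops(G_j)/\sqrt{\edges(G_j)}$ diverged along a subsequence then both the number of loops and the number of vertices of the sampled subgraph would diverge, contradicting convergence in distribution to a finite random graph. If (2) holds, I would run the same argument with $\sampWR$ in place of $\samp$: under with-replacement $p$-sampling with $p=r/\sqrt{2\edges(G_j)}$ the loops of the sample are a thinned Poisson count $\poiDist(p\,\loops(G_j))$ (each sampled copy of a looped vertex of $G_j$ contributes a loop, and looped vertices are never discarded as isolated), so $\loops(G_j)/\sqrt{\edges(G_j)}\to\infty$ along a subsequence would again force the number of loops, hence the number of vertices, of $\sampWR[{r/\sqrt{2\edges(G_j)}}]{G_j}$ to fail to be tight, contradicting weak convergence of $\GPD{W^{G_j,s}}{r}$ to the finite-graph law $\GPD{\W}{r}$.

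With $\loops(G_j)=O(\sqrt{\edges(G_j)})$ established, \cref{samp_equiv_poi_wr} applies for each fixed $r$ and shows that $\samp[{r/\sqrt{2\edges(G_j)}}]{G_j}$ and $\sampWR[{r/\sqrt{2\edges(G_j)}}]{G_j}$ converge in distribution to the same limit (one converges iff the other does); combining this with the with-replacement identity gives the claimed equivalence of (1) and (2). Everything here is a direct chaining of results already established, and I expect the only point needing genuine care to be the loop bookkeeping in the second direction --- in particular verifying that the loop contribution to $\sampWR[p]{G_j}$ behaves as stated, which is the ``small additional complication arising from possible loops'' flagged after the definition of with-replacement sampling.
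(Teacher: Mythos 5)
Your proposal is correct and follows essentially the same route as the paper: identify $\GPD{W^{G_j,s}}{r}$ with the law of the with-replacement $r/\sqrt{2\edges(G_j)}$-sampling, observe that $\loops(G_j)=O(\sqrt{\edges(G_j)})$ is necessary for convergence in either sense, and then invoke \cref{samp_equiv_poi_wr}. The only difference is that you spell out the loop-count tightness argument in both directions, which the paper leaves as a one-line remark; your bookkeeping there (binomial loop count for $\samp$, Poisson-thinned count for $\sampWR$, looped vertices never being isolated) is accurate.
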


\begin{proof}
Note that $\loops(G_j) = O(\sqrt{\edges(G_j)})$ is a necessary condition for
convergence in either sense. If $H_{j,r}  \sim \mathrm{GPD} (W^{G_j,s},r ) $
then $H_{r,j}$ may be generated by first sampling $\poiDist
(\frac{r}{\sqrt{2\edges(G_j)}}\vertices (G_j) ) $ vertices with replacement
from $G_j$ and then returning the edge set of the vertex induced subgraph.
The claim is then simply Lemma~\ref{samp_equiv_poi_wr}, the asymptotic
equivalence of this with replacement sampling scheme and
$r/\sqrt{2\edges(G_j)}$-sampling.
\end{proof}

\begin{theorem}\label{cut_implies_sampling}
Let $G_1, G_2, \dots$ be a uniformly tail regular sequence of simple graphs
and let $W$ be some nonrandom graphon. The following are equivalent:
\begin{enumerate}
\item[1.] The sequence converges in stretched cut distance to $W$.
\item[2.] The sequence is sampling convergent to $W$.
\item[3.] The graphon process corresponding to the stretched empirical
    graphon converges to $W$, in the sense that, for all $r\in\Reals_{+}$,
    $\mathrm{GPD}  (W^{G_j,s},r ) \to\mathrm{GPD} (W,r ) $ weakly as $j
    \to\infty$.
\end{enumerate}
\end{theorem}

\begin{proof}
The equivalence of (2) and (3) is a special case of
Lemma~\ref{w_wo_pp_equiv}.

By Lemma~\ref{cut_implies_sampling_lemma} the convergence in stretched cut
distance implies that, almost surely,
\[
\mathrm{GPD} \bigl(W^{G_j,s},r \bigr) \to\mathrm{GPD} (W,r ),
\]
weakly as $j \to\infty$, for all $r\in\Reals_{+}$. Thus (1) implies (3).

Assume the sequence is sampling convergent. Because the sequence is assumed
to be tail regular, it is subsequentially convergent in the stretched cut
distance, by \cite{Borgs:Chayes:Cohn:Holden:2016}, Theorem~15. If there are
two subsequences with distinct limits then, because (1) implies (2), each of
these subsequences will be sampling convergent with the laws of the sampled
graphs given by distinct graphexes. By \cite{Borgs:Chayes:Cohn:Holden:2016},
Theorem~27, graphexes with stretched cut distance not equal to $0$ generate
distinct distributions. Distinct subsequential limits thus contradict the
assumption of sampling convergence, and so (2) implies (1).
\end{proof}

\begin{remark}
Stretched cut convergent graph sequences are always tail regular, so
convergence in stretched cut distance implies sampling convergence without
any need to explicitly check tail regularity.
\end{remark}

\section{Metrization}\label{sec:metrization}

We now translate our main limit result to the language of metric convergence
and give a compactness result.

Recall that a sequence of graphexes $\W_1, \W_2, \dots$ converges in GP to
$\W$ if for all $r\in\Reals_{+}$, $\mathrm{GPD}  (\W _j,r ) \to\mathrm{GPD}
(\W,r ) $ weakly as $j \to\infty$. Let $\delta_{\mathrm{GP}}$ be a
pseudometric on graphexes that metrizes convergence in GP \cite{Janson:2017}.
Then $\delta_{\mathrm{GP}}$ is a proper metric on the space of equivalence
classes of graphexes under the relation that identifies graphexes that
generate the same probability distribution. We will slightly abuse notation
in the case where $\W_j = (W_j, 0, 0)$ and write
$\delta_{\mathrm{GP}}(W_1,W_2) = \delta_{\mathrm{GP}}(\W _1,\W_2)$.

\begin{defn}
Given two finite unlabeled graphs $G,H$, we define\break  $\delta_{\mathrm
{GP}}(G,H) = \delta_{\mathrm{GP}}(W^{G,s},W^{H,s}) +  \lvert1/\edges(G) -
1/\edges(H)  \rvert$.
\end{defn}

The metric $\delta_{\mathrm{GP}}$ on graphs metrizes sampling convergence:
For sequences such that $\edges(G_j) \upto\infty$ (so the limit is a
graphex), this is Lemma~\ref{w_wo_pp_equiv}. For sequences such that
$\edges(G_j) < k$ for some $k < \infty$ for all $j$, this is trivial because
such a sequence is sampling convergent (and $\delta_{\mathrm{GP}}$
convergent) if and only if there is some finite graph $H$ such that, for all
$j$ sufficiently large, $G_j$ is isomorphic to $H$ after excluding isolated
vertices. A~sequence that satisfies neither condition fails to be sampling
convergent and fails to be $\delta _{\mathrm{GP}}$ convergent.

The term $ \lvert1/\edges(G) - 1/\edges(H)  \rvert$ ensures that
$\delta_{\mathrm{GP}} (G,H)=0$ only if $G$ and $H$ are isomorphic after
removing isolated vertices; without this term we would identify complete
bipartite symmetric graphs $K_{n,n}$ for all $n$.

{\spaceskip=0.2em plus 0.05em minus 0.03em For completeness, we also define a
natural metric between graphs and graphexes,} although we do not make
explicit use of it.

\begin{defn}
Given a finite unlabeled graph $G$ and a graphex $\W$, we define
$\delta_{\mathrm{GP}} (G,\W) = \delta_{\mathrm{GP}}(W^{G,s},\W) +
1/\edges(G)$.
\end{defn}

\begin{defn}
Let $\graphspace$ be the metric space of all edge sets of finite graphs
equipped with $\delta_{\mathrm{GP}}$ (identifying $G$ and $H$ whenever
$\delta_{\mathrm{GP}} (G,H)=0$). Also, let $\graphspace_0\subset\graphspace$
be the metric space of all simple graphs in $\graphspace$.
\end{defn}

\begin{defn}
Let $\graphspace^*$ and $\graphspace^*_0$ be the metric completions of
$\graphspace$ and $\graphspace_0$, respectively.
\end{defn}

Our aim is to identify $\graphspace^*$ with a graphex space.

\begin{defn}
Let $\limspace^k$ be the space of equivalence classes of stretched empirical
graphons of $k$ edge graphs, under the equivalence relation $\sim$ defined by
$W_1 \sim W_2$ if and only if $\delta_{\mathrm{GP}}(W_1, W_2) = 0$.

Let $\limspace^\infty$ be the space of equivalence classes of graphexes $\W$
satisfying $\|\W\|_1 \le1$, under the equivalence relation $\sim$ defined by
$\W_1 \sim\W_2$ if and only if $\delta_{\mathrm{GP}}(\W_1, \W_2) = 0$.

Let $\limspace= (\limspace^\infty\times\{0\}) \cup(\bigcup_{k=1}^\infty
\limspace^k \times\{1/k\})$, equipped with the metric $\delta _{\mathrm{GP}}$\break
defined by $\delta_{\mathrm{GP}}((\W_1,p),(\W_2,q)) = \delta_{\mathrm
{GP}}(\W_1,\W_2) +  \lvert p-q  \rvert$.
\end{defn}

The space $\limspace$ is the natural set of limit points of sampling
convergent graph sequences. Splitting the empirical graphons according to the
number of edges of the corresponding graphs allows for an identification with
$\graphspace$.

It is also convenient to define a version of $\limspace$ that excludes loops.
%
\begin{defn}
Let $\limspace_0\subset\limspace$ be the subspace where the graphons have an
a.e. vanishing diagonal (i.e., $W(x,x)=0$ for almost all $x\in\R_+$).
\end{defn}

The next theorem encapsulates two of our results: limits of sampling
convergent sequences are graphexes, and (up to natural equivalencies) all
integrable graphexes arise in this way.

\begin{theorem}\label{theorem:met_space_struct}
$\graphspace^*$ and $\graphspace^*_0$ are isometric to $\limspace$ and
$\limspace_0$, respectively.
\end{theorem}

\begin{proof}
Let $G_1, G_2, \dots$ be a Cauchy sequence in $\graphspace$. If $G_j = H$ for
some graph $H$ and all sufficiently large $j$, then we identify the sequence
with $(W^{H,s},1/\edges(H))$. If $\edges(G_j) \to\infty$ as $j \to \infty$,
Theorem~\ref{lim_is_det_int_graphex} shows that the sampling convergent limit
is identified with some $\W\in\limspace^\infty$. We then identify the
sequence with $(\W,0)$. We have thus defined a map from $\graphspace ^*$ into
$\limspace$.

Suppose $G_1, G_2, \dots$ maps to $(\W_1,p)$ and that $H_1, H_2, \dots$ is a
second Cauchy sequence that maps to $(\W_2,q)$. Then
\[
\begin{aligned} \lim_n \delta_{\mathrm{GP}}(G_n,H_n)
&= \lim_n \delta_{\mathrm
{GP}}\bigl(\bigl(W^{G_n,s},
1/\edges (G_n)\bigr),\bigl(W^{H_n,s}, 1/\edges(H_n)
\bigr)\bigr)
\\
& = \delta_{\mathrm{GP}}\bigl((\W_1,p), (\W_2,q)\bigr),
\end{aligned} %
\]
where the first equality is by definition and the second is by
Lemma~\ref{w_wo_pp_equiv} and the observation that $\delta_{\mathrm{GP}}$
metrizes sampling convergence. The map is thus an isometry.

Finally, the map is surjective: It follows from
Corollary~\ref{every_graphex_lim} that for each $(\W,0) \in\limspace$ there
is some graph sequence with $\W $ as the sampling convergent limit. The
analogous statement for $(W,1/k) \in \limspace$ with $k < \infty$ is
immediate from the definition of $\limspace$.

The fact that under this isometry, $\graphspace^*_0$ gets mapped into
$\limspace_0$ is trivial.
\end{proof}

\begin{theorem}\label{theorem:compact}
If $G_1, G_2, \dots$ in $\graphspace$ is an infinite sequence such that
$\loops(G_j) = O(\sqrt{\edges(G_j)})$, then it has a subsequence that is
convergent in $\graphspace^*$. In particular, the metric completion
$\graphspace_0^*$ of the space of simple graphs equipped with $\delta
_{\mathrm{GP}}$ is compact.
\end{theorem}

\begin{proof}
Let $G_1, G_2, \dots$ be some sequence in $\graphspace$. If there is some $k
\in\Nats$ such that $\sup_j \edges(G_j) + \loops(G_j) < k$, then the
existence of a convergent subsequence is obvious.

It now suffices to show that the closure in $\graphspace^*$ of sequences such
that $\edges(G_j) \to\infty$ and $\loops(G_j) = O(\sqrt{\edges (G_j)})$ is
sequentially compact. By Lemma~\ref{cannon_emb_convs}, it is equivalent to
show that the canonical embeddings of the graph sequence are sequentially
compact in the topology of weak convergence. \cite{Daley:Vere-Jones:2003:v2},
Proposition~11.1.VI, shows that a sufficient condition for uniform tightness
of a family of probability measures on the space of boundedly finite random
measures on $\Reals_{+}^2$, say $(\Pr(\xi_s \in\cdot))_{s\in\mathcal{I}}$, is
that for any bounded Borel set $B$ and any $\epsilon> 0$ there is some
$M\in\Reals_{+}$ such that $\Pr(\xi_s(B) > M) < \epsilon$ for all
$s\in\mathcal{I}$. For a graph sequence $G_1, G_2, \dots$, the canonical
labelings have the property that $\EE[\mathsf{Lbl}(G_j)([0,r]^2)] \le{r^2} +
r \loops(G_j)/\sqrt{2\edges(G_j)}$ (with equality whenever
$\sqrt{2\edges(G_j)} > r$), from which the uniform tightness condition
follows trivially. The result then follows by Prokhorov's theorem.
\end{proof}

\section{Sampling defines exchangeable random graphs}\label
{sec:samp_defines_GP}

The {time} parameter of a graphex process is related to $p$-sampling by the
observation that if $G  \sim \mathrm{GPD}  (\W,s ) $ then $\mathsf{Smpl}
(G,p)
 \sim
\mathrm{GPD}  (\W,ps ) $. That is, the relationship between graphs at
different {time}s is captured by $p$-sampling. In this section, we show that
this is in fact a defining property of sparse exchangeable random graphs.

\begin{defn}
Call $(G_s)_{s\in\Reals_{+}}$ an \defnphrase{unlabeled random graph process}
indexed by $\Reals_{+}$ if, for all $s$, $G_s$ is a finite unlabeled graph,
and, for all $s \le t$, $G_s \subseteq G_t$ in the sense that there is some
subgraph of $G_t$ that is isomorphic to $G_s$.
\end{defn}

\begin{theorem}\label{psamp_defining}
Let $(G_s)_{s\in\Reals_{+}}$ be an unlabeled random graph process such that
$\edges(G_s) \upto\infty\as$ as $s\to\infty$. For each $s\in \Reals_{+}$ and
$p\in(0,1)$, let $\mathsf{Smpl} (G_s,p)$ be a $p$-sampling of $G_s$. If for
all $s\in\Reals_{+}$ and $p\in(0,1)$,
\[
\mathsf{Smpl} (G_s,p) \equaldist G_{p s},
\]
then there is some (possibly random, possibly nonintegrable) almost surely
nonzero graphex $\W$ such that, for all $s \in\Reals_{+}$, $G_s \given \W
 \sim \mathrm{GPD}  (\W,s ) $.
\end{theorem}

\begin{proof}
To establish the claimed result, it obviously suffices to show that there is
some $\W$ such that $\mathsf{Lbl}_{s}(G_s) \equaldist\KEG_s$, where
$(\KEG_s)_{s\in\Reals_{+}}$ is a graphex process generated by $\W$.

Let $r,s \in\Reals_{+}$ be such that $r < s$. Then
%
\begin{equation}
\mathsf{Lbl}_{s}(G_s) \bigl([0,r)^2
\cap\cdot\bigr) \equaldist\mathsf{Lbl}_{r}\biggl(\mathsf{Smpl}
\biggl({G_s}, \frac{r}{s}\biggr)\biggr) \equaldist
\mathsf{Lbl}_{r}({G_r}).\label{lim_well_def}
\end{equation}
The first equality follows by the observation that each vertex of
$\mathsf{Lbl}_{s}(G_s)$ has label in $[0,r)$ independently with probability
$r/s$, so that $\mathsf{Lbl}_{s}(G_s)$ restricted to $[0,r)^2$ has the same
distribution as $\mathsf{Lbl}_{r}(\mathsf{Smpl} ({G_s},r/{s}))$. The second
equality is by hypothesis.

Let $\xi$ be a point process with distribution defined by, for any bounded
Borel sets $B_1, \dots, B_n \subseteq\Reals_{+}^2$,
\[
\bigl\{\xi(B_1), \dots, \xi(B_n)\bigr\} \equaldist\lim
_{s\to\infty}\bigl\{ \mathsf{Lbl}_{s}(G_s)
(B_1), \dots, \mathsf {Lbl}_{s}(G_s)
(B_n) \bigr\}.
\]
Equation \eqref{lim_well_def} makes it clear that the limiting distribution
on the right-hand side is well-defined. Moreover, using the fact that the
joint distribution is defined as counts of the random labeling point process,
the consistency conditions necessary for $\lim_{s\to\infty}\{
\mathsf{Lbl}_{s}(G_s)(B_1), \dots, \mathsf {Lbl}_{s}(G_s)(B_n) \}$ to be
counts with respect to some point process can easily be seen to be satisfied.
By the Kolmogorov existence theorem for point processes (see
\cite{Daley:Vere-Jones:2003:v2}, Theorem~9.2.X), this suffices to show that
$\xi$ exists and has a well-defined distribution. Also note that $\xi$ is
purely atomic by construction.

Observe that by \eqref{lim_well_def} and the definition of $\xi$ it holds
that, for all $r \in\Reals_{+}$,
%
\begin{equation}
\mathsf{Lbl}_{r}(G_r) \equaldist\xi\bigl([0,r
)^2 \cap \cdot\bigr). \label
{eq:lim_trunc_is_lbl}
\end{equation}
In consequence, for any measure-preserving transformation $\phi$ on $[0,r)$,
$\xi\circ(\phi\otimes\phi) \equaldist\xi$. In particular then, for any dyadic
partitioning of $\Reals_{+}$ and any transposition $\tau$ of this dyadic
partitioning we may take $r$ large enough such that the transposition acts
only in $[0,r)$, and thus $\xi\circ(\tau\otimes\tau) \equaldist \xi$. By
\cite{Kallenberg:2005}, Proposition~9.1, this implies that $\xi$ is
exchangeable.

We now have that $\xi$ is a purely atomic exchangeable point process, so by
the Kallenberg representation theorem,
Theorem~\ref{theorem:graphex_rep_theorem}, there is some graphex $\W$ such
that $\xi$ is generated by $\W$. The proof is then completed by again
invoking~\eqref{eq:lim_trunc_is_lbl}.
\end{proof}


\section*{Acknowledgments}
The authors thank Daniel Roy for many helpful discussions, Svante Janson for
many valuable comments and suggestions on an earlier draft and the anonymous
referee for detailed and helpful comments.


%

\end{document}